\pgfplotsset{compat=1.11}
\newtheorem{theorem}{Theorem}[section]
\newtheorem{lemma}[theorem]{Lemma}
\theoremstyle{definition}
\newtheorem{definition}[theorem]{Definition}
\newtheorem{algorithm}[theorem]{Algorithm}
\newtheorem{prop}[theorem]{Proposition}
\theoremstyle{remark}
\newtheorem{remark}[theorem]{Remark}
\numberwithin{equation}{section}
\newcommand{\ord}{\operatorname{ord}}
\begin{document}

\title{Extending Lenstra's Primality Test to CM Elliptic Curves and a new Quasi-Quadratic Las Vegas Algorithm for Primality}


\author{Tejas Rao}
\date{December 2022}

\maketitle

\begin{abstract}
    For an elliptic curve with CM by $K$ defined over its Hilbert class field, $E/H$, we extend Lenstra's finite fields test to generators of norms of certain ideals in $\mathcal{O}_H$, yielding a sufficient $\widetilde{O}(\log^3 N)$ primality test and partially answering an open question of Lemmermeyer in the case of CM elliptic curves. Letting $\iota,\gamma, b\in \mathcal{O}_K$, $(\iota)$ prime, and $b$ a primitive $k$-th root of unity modulo $(\iota)^n$ we specialize this test to rational integers of the form $N_{K/\mathbb{Q}}(\gamma\iota^n+b)$ with the norm of $\gamma$ small, giving a Las Vegas test for primality with average runtime $\widetilde{O}(\log^2 N)$, that further certifies primality of such integers in $\widetilde{O}(\log^2 N)$ for nearly all choices of input parameters. The integers tested were not previously amenable to quasi-quadratic heuristic primality certification. 
\end{abstract}

\section{Preliminaries}

Fast primality testing of a rational integer $N$ relies on a theme of Lucas, wherein a finite group is constructed so (provably) large that $N$ must be prime. Pomerance outlines the classical methods arising from this theme \cite{Pom}. The binary modular exponentiation central to this Lucasian theme runs in $\widetilde{O}(\log^2 N)$, providing a hypothetical lower bound to all primality testing barring a new theme. Pomerance proved that this hypothetical minimum bound is met: there exist $\widetilde{O}(\log^2 N)$ certifications of primality for every rational prime \cite{certification,Pom}. \emph{Finding} these certifications, i.e. testing for primality, is another story. The fastest classical test relies on a known factorization of $N^k-1$ for some fixed small $k\in \mathbb{N}$, and runs in heuristic $\widetilde{O}(\log^3 N)$ time:

\begin{theorem}[Lenstra's Finite Fields Test, \cite{Pom,Lens}]

Let $N,k$ positive integers, $N>1$ and $f\in (\mathbb{Z}/N\mathbb{Z})[x]$ monic of degree $k$. Suppose that $F|N^k-1$, $F>\sqrt{N}$, and $F$ has a known prime factorization. If $\exists g\in (\mathbb{Z}/N\mathbb{Z})[x]$ such that in $(\mathbb{Z}/N\mathbb{Z})[x]/(f)$,
\begin{align*}
    &\text{(1) } g^{F}=1,  \\
    &\text{(2) } \gcd(g^{\frac{F}{q}}-1,f)=1, \text{ for each prime } q|F,\\ 
    &\text{(3) } \text{each elementary symmetric polynomial in } g^{N^j}, 0\leq j\leq {k-1}\\& \text{ has coefficients in } \mathbb{Z}/N\mathbb{Z},
\end{align*}
and if none of the residues $N^j\mod F$, $0\leq j\leq k-1$, are proper factors of $N$, then $N$ is prime. 

\end{theorem}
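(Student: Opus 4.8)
The plan is to prove the contrapositive: assuming $N$ is composite, I would exhibit a proper factor of $N$ among the residues $N^{j}\bmod F$. So suppose $N$ is composite, let $p$ be its least prime factor — then $p$ is a proper factor of $N$ and $0<p\le\sqrt{N}<F$ — and set $R:=(\mathbb{Z}/N\mathbb{Z})[x]/(f)$. Reducing $f$ mod $p$ gives a monic $\bar f\in\mathbb{F}_p[x]$ of degree $k$; fix a monic irreducible factor $h\mid\bar f$ with $d:=\deg h\le k$, so that $\mathbb{F}:=\mathbb{F}_p[x]/(h)\cong\mathbb{F}_{p^{d}}$ is a field and the reduction $(\mathbb{Z}/N\mathbb{Z})[x]\to\mathbb{F}$ (mod $p$, then mod $h$) kills $f$, descending to a ring homomorphism $\phi\colon R\to\mathbb{F}$. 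Put $\gamma:=\phi(g)$.

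The first step is to pin down the multiplicative order of $\gamma$ in $\mathbb{F}^{\times}$. Condition (1) gives $\gamma^{F}=\phi(g^{F})=1$, so $\gamma\in\mathbb{F}^{\times}$ and $\ord(\gamma)\mid F$. Condition (2), read as the assertion that $1$ lies in the ideal $(g^{F/q}-1,\,f)$ of $(\mathbb{Z}/N\mathbb{Z})[x]$ — and note that if instead the Euclidean gcd computation stumbles on a non-invertible leading coefficient it thereby exposes a zero divisor mod $N$, hence a proper factor of $N$, and we are done — makes $g^{F/q}-1$ a unit of $R$, so that $\gamma^{F/q}-1=\phi(g^{F/q}-1)\ne 0$ for every prime $q\mid F$. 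Hence $\ord(\gamma)=F$, and in particular $F\mid p^{d}-1$.

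Now comes the crux, condition (3): it says exactly that the $k$ elementary symmetric functions of $g^{N^{0}},\dots,g^{N^{k-1}}\in R$ are scalars, i.e. lie in the copy of $\mathbb{Z}/N\mathbb{Z}$ inside $R$; equivalently, the monic degree-$k$ polynomial $G(T):=\prod_{j=0}^{k-1}(T-g^{N^{j}})\in R[T]$ actually lies in $(\mathbb{Z}/N\mathbb{Z})[T]$. Pushing $G$ through $\phi$ coefficientwise and using $\phi(g^{N^{j}})=\gamma^{N^{j}}$ yields $\phi(G)(T)=\prod_{j=0}^{k-1}(T-\gamma^{N^{j}})\in\mathbb{F}_p[T]$. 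Since the $j=0$ factor is $T-\gamma$, the element $\gamma$ is a root of $\phi(G)$; as $\phi(G)\in\mathbb{F}_p[T]$, the minimal polynomial of $\gamma$ over $\mathbb{F}_p$ divides $\phi(G)$, so the Frobenius conjugate $\gamma^{p}$ is again a root of $\phi(G)$, forcing $\gamma^{p}=\gamma^{N^{j}}$ for some $0\le j\le k-1$. Then $\gamma^{\,p-N^{j}}=1$, and since $\ord(\gamma)=F$ this gives $p\equiv N^{j}\pmod F$. Both $p$ and $N^{j}\bmod F$ lie in $\{0,1,\dots,F-1\}$ (for $p$ this uses $0<p<F$), so they coincide: $N^{j}\bmod F=p$, a proper factor of $N$ — contradicting the hypothesis. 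Therefore $N$ is prime.

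I expect no conceptual obstacle: this is the classical ``lift a finite-field test'' maneuver. Two points merely need care: the precise meaning of the gcd condition over the non-field $\mathbb{Z}/N\mathbb{Z}$ (handled above, with a failed gcd being itself favorable), and the harmless fact that $\bar f$ need be neither irreducible nor squarefree — we only ever use one irreducible factor to land in a genuine field. The genuinely load-bearing step is the use of condition (3): it is precisely what forces $\phi(G)\in\mathbb{F}_p[T]$, so that the Frobenius $y\mapsto y^{p}$ of $\mathbb{F}$ permutes the set $\{\gamma^{N^{j}}:0\le j\le k-1\}$; combined with $F>\sqrt{N}$, this manufactures the congruence $p\equiv N^{j}\pmod F$ and then rigidifies it into the equality $p=N^{j}\bmod F$.
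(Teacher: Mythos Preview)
The paper does not actually prove this statement: Theorem~1.1 is stated in the Preliminaries section with citations to \cite{Pom,Lens} and no proof is given, so there is nothing in the paper to compare against directly.

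That said, your argument is correct and is precisely the classical proof of Lenstra's criterion. The reduction $R\to\mathbb{F}_{p^d}$ via a monic irreducible factor of $\bar f$, the order computation $\ord(\gamma)=F$ from conditions (1) and (2), and the use of condition (3) to force $\prod_j(T-\gamma^{N^j})\in\mathbb{F}_p[T]$ so that Frobenius permutes the $\gamma^{N^j}$ --- all of this is exactly how the proof goes in the sources the paper cites. Your handling of the two delicate points (the meaning of the gcd over a non-field, and that $\bar f$ need not be irreducible) is also accurate. One very minor remark: in the sentence ``Both $p$ and $N^{j}\bmod F$ lie in $\{0,1,\dots,F-1\}$'' you might note that $N^j\bmod F\neq 0$ as well, since $\gcd(N,F)\mid\gcd(N,N^k-1)=1$; otherwise the residue-checking hypothesis would be vacuous for that $j$. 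But this is cosmetic --- the argument stands.
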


Similar to Pocklington's criterion, multiple bases $g$ may be chosen \cite{Lens}. The algorithm runs in heuristic $\widetilde{O}(k^2\log^3 N)$, and tests based on it are referred to as cyclotomic primality tests \cite{Pombook}. The runtime of this algorithm is deterministically $\widetilde{O}(\log^2 N)$ if $k$ is small and fixed, the number of prime powers of $F$ is polynomial in $\log\log N$, and a suitable $g$ (or multiple bases) is known; it remains heuristic $\widetilde{O}(\log^2 N)$ time even if the bases $g$ are not known beforehand. This is the hypothetical minimum runtime of Lucasian tests, and an algorithm having such (at least heuristic) runtime will henceforth be referred to as an \textit{efficient primality test}. 

Beyond classical primality testing, the theory of Ellipitic Curve Primality Proving (ECPP) has been developed by such figues as Goldwasser, Kilian, Atkin, and Morain \cite{GW,ECPP}. In the seminal Elliptic Curve Primality Proving paper by Atkin and Morain \cite{ECPP}, the theory of complex multiplication (CM) is used to determine the orders of the groups of points of certain elliptic curves and test for primality of any positive integer $N$. An asymptotically-fast version due to Shallit runs in heuristic $\widetilde{{O}}(\log^4 N)$ time and stands as our fastest general algorithm in practice \cite{ECPPFast,EF}. More recently, Milhailescu proposed a variant general primality test running one round of cyclotomic primality testing, followed by a round of ECPP, running in heuristic $\widetilde{O}(\log^3 N)$ time, which would stand as the fastest general algorithm \cite{M}. Importantly, it does not reduce to an \emph{efficient primality test} when the number of prime power factors of $F$ is polynomial in $\log \log N$. 
General testing is not yet $\widetilde{O}(\log^2 N)$, and thus there has been work done to determine heuristic $\widetilde{O}(\log^2 N)$ testing utilizing elliptic curves, including the works of Gurevich and Kunyavskiĭ, Tsumura, Gross, Denomme and Savin, and Chudnovsky and Chudnovsky \cite{1,2,3,4,5}. In addition, Abatzoglou, Sutherland, Wong, and Silverberg use CM elliptic curves to provide a framework for deterministic efficient primality testing for certain sequences of integers not amenable to classical testing, utilizing elliptic curves with CM by the rings of integers of $\mathbb{Q}[\sqrt{-7}],\mathbb{Q}[\sqrt{-15}]$ \cite{suther,sutherpre}. A recent preprint in the same vein proposes an extension of the results to class number $3$ \cite{on}. These works expand the class of rational integers amenable to efficient primality testing, including those of the form $N_{K/\mathbb{Q}}(\gamma\iota^k+1)$ for $\iota,\gamma\in \mathcal{O}_K$, $K$ an imaginary quadratic field, when there is an elliptic curve over the Hilbert class field with CM by $\mathcal{O}_K$ and the norm of $\gamma$ is small. Along with rational integers $N$ such that $N^k-1$ is highly factored into a small number of distinct prime factors using Lenstra's finite fields test, these are the integers most amenable to efficient primality testing in the literature. Some remarks on the state of efficient elliptic curve primality testing are made by Silverberg in \cite{berg}.



In the open problems section of his celebrated work Reciprocity Laws: From Euler to Eisenstein \cite[Appendix C]{lemmer}, Lemmermeyer asks: 
\begin{center}
    Can Lenstra's Primality Test be generalized so as to include 
primality tests based on elliptic curves?
\end{center}
In this paper, we answer this question in the affirmative for certain inputs in the case of CM elliptic curves, producing an analogue heuristic $\widetilde{O}(k^2 \log^3 N)$ primality test for a new class of rational integers that reduces to heuristic $\widetilde{O}(\log^2 N)$ time when $k$ and the number of certain prime power factors is small. To avoid precomputing the complex isogenies of the CM elliptic curve, we introduce another efficient primality test for a smaller new class of rational integers, this time which provides a certificate of primality of a prime $p$ in one trial with probability $1-1/p^{\alpha}$ for some reasonably-sized $\alpha$, and is a Las Vegas primality algorithm with average runtime $\widetilde{O}(\log^2 N)$. The methodology to ensure certification of primality in one trial with high probability is inspired by that of Grau, Marcén, and Sadornil \cite{Grau1,Grau2}. We describe the implementation of the algorithm and give an example of rational integers amenable to primality testing by it. In particular, this Las Vegas primality algorithm can test rational integers in sequences of the form

$$N_{K/\mathbb{Q}}(\gamma\iota^k+b)$$

where $\gamma,\iota\in \mathcal{O}_K$ for some quadratic imaginary field $K$ and $N_{K/\mathbb{Q}}(\iota^n)>N^{1/2+\alpha}$, with the following precomputed information: an elliptic curve $E/H$ with CM by $\mathcal{O}_K$, a rational prime $q=\iota\overline{\iota}$ splitting into two principal ideals over $\mathcal{O}_K$, a primitive $k$-th root of unity $b$ modulo $(\iota)^n$, some $k$-th power non-residue $a$ modulo $\alpha_n=\gamma\iota^n+b$, and that $E/H$ has good reduction modulo an ideal above $(\alpha_n)$ in $H/K$ (it is sufficient to check $\gcd(N_{K/\mathbb{Q}}(\alpha_n),N_{H/\mathbb{Q}}(\operatorname{disc} (E)))=(1)$).

\section{Extending Lenstra's Criterion}

\subsection{Notation and Assumptions} 

Throughout this paper, $p,q$ refer to rational primes. Further, $E/M$ refers to an elliptic curve defined over some number field $M$ and $\mathcal{O}_M$ denotes the ring of integers of $M$. Let $h_M$ denote the class number. For a Dedekind domain (e.g. the ring of integers of a number field) we adopt the definition $\gcd(\mathfrak{j},\mathfrak{i})=\mathfrak{j}+\mathfrak{i}$ for two ideals $\mathfrak{j},\mathfrak{i}$. We note that there is no proper ideal containing both $\mathfrak{j},\mathfrak{i}$ precisely when $\gcd(\mathfrak{j},\mathfrak{i})=(1)$. Further, let $N_{L/V}$ denote the field norm for a field extension $L/V$. 

The number fields of interest for most of this paper are imaginary quadratic fields $K$ with Hilbert class field $H$. We let $E/H$ be an elliptic curve defined over $H\supset K$ with complex multiplication by the ring of integers $\mathcal{O}_K$ of $K$. \emph{Further assume that $E$ has good reduction modulo every relevant ideal unless otherwise stated}. That is, all prime ideals modulo which we reduce $E$ are prime to $\operatorname{disc}(E)$. We further let $\phi_{E,q}$ be the Frobenius endomorphism on the group $E(\overline{\mathbb{F}}_q)$ given by $\phi([x:y:z])=[x^q:y^q:z^q]$. Further we will make the simplifying assumption that $K\neq \mathbb{Q}[\sqrt{-1}],\mathbb{Q}[\sqrt{-3}]$ so that the unit group of $\mathcal{O}_K$ is $\{\pm 1\}$, although it should be noted that much of the theory can be extended without much trouble to these cases.

Let $\mathfrak{N}\subset \mathcal{O}_H$ be an ideal,  $\pi\in \mathcal{O}_K$, and $N\in \mathbb{N}$ such that 

\begin{align*}
    N_{H/K}({\mathfrak{N}})&=\pi\mathcal{O}_K & N_{K/\mathbb{Q}}(\pi\mathcal{O}_K)&=(N)
\end{align*}

We will reduce to the cases where the prime ideal factorization of a large principal ideal factor of $(\pi^k-1)$ is known:

\begin{align*}
    (\pi^k-1) &= \Gamma\Lambda & \Lambda=(\lambda)&=\prod_{\mathfrak{q}|\Lambda} \mathfrak{q}^{e_q}\subset \mathcal{O}_K
\end{align*}

We also consider prime ideals of $\mathcal{O}_H$, $\mathfrak{p}$, with $N_{H/K}({\mathfrak{N}})=\pi_p\mathcal{O}_K$ and $N_{H/\mathbb{Q}}(\mathfrak{p})=p^j$ for a rational positive prime $p$. Also let 

$$f_k=x^k-a\in (\mathcal{O}_H/\mathfrak{N})[x]$$

with $a\in \mathcal{O}_H$ a primitive $k$-th power non-residue modulo $\mathfrak{N}$. In particular, if $\mathfrak{N}$ is prime, then $(\mathcal{O}_H/\mathfrak{N})[x]/(f_k)$ is a degree $k$ finite extension field of $\mathcal{O}_H/\mathfrak{N}$. In addition, reducing modulo $\mathfrak{p}$ (for some prime $\mathfrak{p}|\mathfrak{N}$), we can consider computations in $(\mathcal{O}_H/\mathfrak{p})[x]/(f_k)$ as (potentially not fully reduced) computations in some degree $h<k$ extension field of $\mathcal{O}_H/\mathfrak{p}$. This is because $(f_k)$ splits into the product of irreducible polynomials $g_i$ modulo $\mathfrak{p}$, and thus $(f_k)\subset (g_i)$ for some $g_i$.

Let $P=[x_0:y_0:z_0]\in E(\mathcal{O}_H)$. We write $P\mod \mathfrak{N}$ to denote the coordinate-wise reduction of $P$ modulo $\mathfrak{N}$. Note that we have that $P\equiv O_E\mod \mathfrak{N}\Leftrightarrow z_0\in \mathfrak{N}$. Following the convention of \cite{suther,sutherpre}, we say $P$ is \emph{strongly non-zero} modulo $\mathfrak{N}$ if $\gcd(z_0\mathcal{O}_H,\mathfrak{N})=(1)$. In particular this implies $z_0\mathcal{O}_H$ and $\mathfrak{N}$ are relatively prime. In particular this implies that for each prime $\mathfrak{p}|\mathfrak{N}$, $P\not\equiv O_E\mod \mathfrak{p}$.  

Now if $P=[x_0:y_0:z_0]=[x_0:y_0:\sum_{x=0}^{k-1} a_i x^i]\in E(\mathcal{O}_H[x]/(f_k))$. Note we are implicitly reducing $z_0$ modulo $f_k$ so that $z_0$ is given by an at most degree $k-1$ polynomial; we will continue this practice throughout the paper. In particular this implies that $P\equiv O_E\mod (\mathfrak{N},f_k)\Leftrightarrow P\equiv O_E\mod \mathfrak{N}$ since it is already reduced modulo $f_k$. We thus note 
$$P\equiv O_E\mod (\mathfrak{N},f_k)\Leftrightarrow 
 P\equiv O_E\mod \mathfrak{N} \Leftrightarrow z_0\in \mathfrak{N}\Leftrightarrow a_i\in \mathfrak{N}, 0\leq i\leq k-1$$

Thus we say $P$ is \emph{strongly non-zero} modulo $(\mathfrak{N},f_k)$ if $\gcd(a_i\mathcal{O}_H,\mathfrak{N})=(1)$ for $0\leq i\leq k-1$. In particular this implies that $a_i\not\in \mathfrak{p}$ for each prime $\mathfrak{p}|\mathfrak{N}$ and thus that $P\not\equiv O_E\mod \mathfrak{p}$.

\begin{remark}\label{stronglynonzero}
If one wants to confirm $\gcd(a\mathcal{O}_H,\mathfrak{N})=(1)$, simply show that $\gcd(N_{H/\mathbb{Q}}(a\mathcal{O}_H),N)=1$. This is sufficient, and if $N>\gcd(N_{H/\mathbb{Q}}(a\mathcal{O}_H),N)>1$, $N$ is composite and we may terminate whatever primality testing algorithm we are running. If $N=\gcd(N_{H/\mathbb{Q}}(a\mathcal{O}_H),N)$ and $\gcd(a\mathcal{O}_H,\mathfrak{N})=(1)$ then $\sigma{\mathfrak{N}}|a\mathcal{O}_H$ for some $\sigma'\in Gal(H/K)$, $\sigma'\neq id$. The latter condition may be checked because $\sigma'{\mathfrak{N}}|a\mathcal{O}_H\Leftrightarrow (N)|\prod_{\sigma\in Gal(H/K),\sigma\neq \sigma'} \sigma(a\mathcal{O}_H)\mathfrak{N}$. We can try each guess for $\sigma'\neq id$ to check this. 
\end{remark}

Note that if $E/H$, with CM by $\mathcal{O}_K$, has good reduction at a prime $\mathfrak{p}$, then by Silverman's \emph{Advanced Topics in the Arithmetic of Elliptic Curves}, 
$$\operatorname{End}(E)\rightarrow \operatorname{End}(\widetilde{E})$$
is a homomorphism of $\mathcal{O}_K$-modules, where $\widetilde{E}$ denotes the reduction \cite[Chapter II,IV]{silver2}. Thus it does not make a difference in computations when we mod by $\mathfrak{p}$. If we do computations on $E/H$ modulo a possibly composite ideal $\mathfrak{N}$ with good reduction, then since $\mathfrak{N}\subset \mathfrak{p}$, we can consider computations as partial reductions and thus mod by $\mathfrak{N}$ universally in computation: 

$$[\alpha](Q\mod \mathfrak{N}) \equiv [\alpha]Q \mod \mathfrak{N}$$

\subsection{Hecke Character Properties}

Consider again $E/H$ with CM by $\mathcal{O}_K$ ($K\subset H$). Again recall that throughout this paper $K\neq \mathbb{Q}[i],\mathbb{Q}[\sqrt{-3}]$. We adopt and specialize the following definition-lemma from \cite[Prop. 4.1]{count}.
\begin{lemma}\label{hecke}
    Let $\psi: I(B)\rightarrow K^\times$ denote the \emph{Hecke Character} given as the unique character from the group of fractional ideals with support outside of primes $\beta\in \mathcal{O}_H$ where $E$ has bad reduction satisfying: 
    \begin{align*}
        &\text{(1) $\psi(\mathfrak{p})\in \mathcal{O}_K$, and $\psi(\mathfrak{p})$ is a generator of $N_{H/K}(\mathfrak{p})$}\\
        &\text{(2) $|E(\mathcal{O}_H/\mathfrak{p})|=N_{H/\mathbb{Q}}(\mathfrak{p})+1-Tr_{K/\mathbb{Q}}(\psi(\mathfrak{p}))$}
    \end{align*}
\end{lemma}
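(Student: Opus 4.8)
The plan is to establish Lemma~\ref{hecke} by invoking the standard theory of Hecke characters attached to CM elliptic curves and then checking that the two asserted properties pin down the character uniquely and hold in the stated form. First I would recall, from Silverman's \emph{Advanced Topics} (Chapter II), that an elliptic curve $E/H$ with CM by $\mathcal{O}_K$ and $K\subseteq H$ has an associated Gr\"ossencharacter $\psi_{E/H}$ of $H$, valued in $K^\times$, whose value at a prime $\mathfrak{p}$ of good reduction is the unique generator $\pi_{\mathfrak p}\in\mathcal{O}_K$ of $N_{H/K}(\mathfrak p)$ that reduces to the $q$-power Frobenius on $\widetilde E$ under the isomorphism $\mathrm{End}(\widetilde E)\cong\mathcal{O}_K$; here one uses the main theorem of CM to identify the Artin symbol action on torsion with multiplication by $\psi(\mathfrak p)$. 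Property~(1) is then essentially the definition, once one checks that the normalization of $\psi$ forces $\psi(\mathfrak p)\in\mathcal{O}_K$ (not merely $K^\times$) for primes of good reduction — this follows because $\psi(\mathfrak p)$ is an algebraic integer generating $N_{H/K}(\mathfrak p)\subseteq\mathcal{O}_K$.

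Next I would derive property~(2) from the relation between $\psi(\mathfrak p)$ and Frobenius. Reducing $E$ modulo $\mathfrak p$ gives $\widetilde E/(\mathcal{O}_H/\mathfrak p)$, a curve over the finite field $\mathbb{F}_{N_{H/\mathbb Q}(\mathfrak p)}$ (writing $N_{H/\mathbb Q}(\mathfrak p)=p^j$). By Hasse, $|\widetilde E(\mathbb{F}_{p^j})| = p^j+1-a_{\mathfrak p}$ where $a_{\mathfrak p}$ is the trace of the $p^j$-power Frobenius $\phi_{E,p^j}$ acting on the Tate module. The CM identification sends $\phi_{E,p^j}$ to multiplication by $\psi(\mathfrak p)\in\mathcal{O}_K$ (this is where the compatibility $\mathrm{End}(E)\to\mathrm{End}(\widetilde E)$ of $\mathcal{O}_K$-modules, quoted in the excerpt from \cite{silver2}, is used), and the trace of multiplication-by-$\psi(\mathfrak p)$ on the $2$-dimensional rational Tate module is exactly $\mathrm{Tr}_{K/\mathbb Q}(\psi(\mathfrak p))$ since $\mathcal{O}_K$ acts on $T_\ell E\otimes\mathbb{Q}_\ell$ as a rank-one $K\otimes\mathbb{Q}_\ell$-module. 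Also $N_{H/\mathbb Q}(\mathfrak p)=N_{K/\mathbb Q}(N_{H/K}(\mathfrak p))=N_{K/\mathbb Q}(\psi(\mathfrak p))=\psi(\mathfrak p)\overline{\psi(\mathfrak p)}$, which is consistent. Combining, $|\widetilde E(\mathbb{F}_{p^j})| = N_{H/\mathbb Q}(\mathfrak p)+1-\mathrm{Tr}_{K/\mathbb Q}(\psi(\mathfrak p))$, which is (2).

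Finally, for uniqueness: a Hecke character is determined by its values on a set of primes of density one, and properties~(1)–(2) together with the Hasse bound constrain $\psi(\mathfrak p)$ to one of finitely many generators of $N_{H/K}(\mathfrak p)$ — precisely, $\psi(\mathfrak p)$ and its conjugate $\overline{\psi(\mathfrak p)}$ have the same trace, so (2) alone does not separate them, but the full main theorem of complex multiplication forces the specific choice realizing Frobenius, and the unit group being $\{\pm1\}$ (our running assumption $K\neq\mathbb{Q}[i],\mathbb{Q}[\sqrt{-3}]$) cuts down the ambiguity in the generator to a sign that is again fixed by the Frobenius condition. I expect the main obstacle to be bookkeeping the normalizations: carefully stating in what sense $\psi$ is "the" Hecke character (i.e.\ which conductor, which infinity-type, and the precise compatibility with the geometric versus arithmetic Frobenius), so that property~(1)'s assertion "$\psi(\mathfrak p)$ is \emph{a} generator" is upgraded to a canonical one and uniqueness genuinely holds. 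Since the statement is explicitly adapted from \cite[Prop. 4.1]{count}, the cleanest route is to cite that proposition and Silverman's treatment, and then just verify that our specialization (imaginary quadratic $K$, $E$ defined over the full Hilbert class field $H$, trivial unit group) is a legitimate instance, rather than reproving the existence of $\psi$ from scratch.
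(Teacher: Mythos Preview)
Your proposal is correct and ultimately lands on the same approach as the paper: cite \cite[Prop.~4.1]{count} (together with the standard CM theory in Silverman) and check that the specialization to $\mathcal{O}=\mathcal{O}_K$ with $K\neq\mathbb{Q}[i],\mathbb{Q}[\sqrt{-3}]$ is legitimate. The paper's proof is simply terser---it observes that conditions (ii),(iii) of \cite[Prop.~4.1]{count} are trivially satisfied when $\mathcal{O}=\mathcal{O}_K$ and points to \cite[Lemma~2.6, Remark~2.7, Corollary~4.2]{count} for the remaining verifications, without unpacking the Frobenius/trace argument you sketch.
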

\begin{proof}
    We must check that Proposition $4.1$ specializes to this case when we take the order in $K$, $\mathcal{O}=\operatorname{End} E$, to be $\mathcal{O}_K$, and when we let $K\neq \mathbb{Q}[i],\mathbb{Q}[\sqrt{-3}]$. But this is precisely the content of \cite[Lemma 2.6, Remark 2.7, Corollary 4.2]{count}. Notice that conditions $(ii),(iii)$ of \cite[Prop. 4.1]{count} are trivially satisfied in this special case because $\mathcal{O}=\mathcal{O}_K$.
\end{proof}

In particular, $\psi(\mathfrak{p})\in \mathcal{O}_K$ is the Frobenius endomorphism of $E$ defined over ${\mathcal{O}_H/\mathfrak{p}}$, $\psi(\mathfrak{p})[P]=P$ for all $P\in E(\mathcal{O}_H/\mathfrak{p})$ \cite{deuring}. The above paper gives a method of calculating the Hecke character for certain primes $\mathfrak{p}$ in a method that is negligible in computational complexity compared to the runtime of the algorithm \ref{1}, as spelled out below in Definition \ref{comp}, Lemma \ref{easy2}, and \cite[Prop. 6.2]{count}. In particular, we have that 

$$\psi(\mathfrak{p})=u\pi_p$$
for some unit $u\in\mathcal{O}_K^\times$. Since we have removed $\mathbb{Q}[i],\mathbb{Q}[\sqrt{-d}]$, $u=\pm 1$. We define the analogs $\left(\frac{a}{\mathfrak{N}}\right)$ of the Legendre symbol as in \cite[Def. 2.3]{count}, except we allow $\mathfrak{N}$ to be composite in the definition. Let $D$ be the discriminant of $K$. We make the following definition: 
\begin{definition}\label{comp}
     Suppose $E: y^2=x^3+ax+b$. Let $\tau$ be as in \cite[Prop. 5.3]{count}. Recall $\pi:=N_{H/K}(\mathfrak{N})$. Let $\epsilon_\tau$ be as in \cite[Prop 6.2]{count}. For an ideal $\mathfrak{N}\subset \mathcal{O}_K$ prime to $\operatorname{disc}(E)$, define 
    $$\psi(\mathfrak{N})=
     \begin{cases}
        \left(\frac{6b\gamma_3}{\mathfrak{N}}\right)_{2,H}\epsilon_\tau(\pi)\pi & \text{if $D$ is odd} \\
        
        \left(\frac{-6bi\gamma_3}{\mathfrak{N}}\right)_{2,H}\epsilon_\tau(\pi)\pi & \text{if $D\equiv 4,8\mod 16$} \\
        
        \left(\frac{6^2b^2(j-1728)}{\mathfrak{N}}\right)_{4,H}\epsilon_\tau(\pi)\pi & \text{if $D\equiv 0,12\mod 16$}
     \end{cases}
     $$
\end{definition}

Note that by construction we have 

\begin{lemma}\label{easy2}
    If $\mathfrak{N}$ is prime, then $\psi(\mathfrak{N})$ from Definition \ref{comp} and $\psi(\mathfrak{N})$ from Lemma \ref{hecke} agree. 
\end{lemma}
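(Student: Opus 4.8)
The plan is to verify that, for prime $\mathfrak{N}$, the closed-form expression of Definition \ref{comp} produces exactly the generator of $N_{H/K}(\mathfrak{N})$ singled out by Lemma \ref{hecke}, by quoting the point-counting results of \cite{count} from which Definition \ref{comp} was extracted. First I would recall that the Hecke character is unique (\cite[Prop. 4.1]{count}), so $\psi(\mathfrak{N})$ in the sense of Lemma \ref{hecke} is well defined: it is the generator $\pi$ of $N_{H/K}(\mathfrak{N})$ lying in $\mathcal{O}_K$ for which $|E(\mathcal{O}_H/\mathfrak{N})| = N_{H/\mathbb{Q}}(\mathfrak{N}) + 1 - \operatorname{Tr}_{K/\mathbb{Q}}(\pi)$, i.e.\ the one acting as Frobenius on $E(\mathcal{O}_H/\mathfrak{N})$ (Deuring, \cite{deuring}). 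Since $\mathcal{O}_K^\times = \{\pm 1\}$ under our standing assumption $K \neq \mathbb{Q}[i], \mathbb{Q}[\sqrt{-3}]$, the only ambiguity in the word ``generator'' is a sign, so it suffices to check that the right-hand side of Definition \ref{comp} is a correctly-signed generator with the right trace.

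Next I would observe that each branch of Definition \ref{comp} is, by construction, the formula of \cite[Prop. 6.2]{count} for the Frobenius endomorphism of $E$ over $\mathcal{O}_H/\mathfrak{N}$ — with $\tau$ taken as in \cite[Prop. 5.3]{count} and $\epsilon_\tau$ as in \cite[Prop. 6.2]{count} — specialized to the order $\mathcal{O} = \operatorname{End} E = \mathcal{O}_K$ and to $K \neq \mathbb{Q}[i], \mathbb{Q}[\sqrt{-3}]$, exactly as in the proof of Lemma \ref{hecke}. The hypothesis that $\mathfrak{N}$ is prime to $\operatorname{disc}(E)$ guarantees that the residue symbols $\left(\tfrac{\cdot}{\mathfrak{N}}\right)_{2,H}$, $\left(\tfrac{\cdot}{\mathfrak{N}}\right)_{4,H}$ and the factor $\epsilon_\tau(\pi)$ are all defined, so the cited propositions apply directly: their product with $\pi$ lies in $\mathcal{O}_K$, generates $N_{H/K}(\mathfrak{N})$, and acts as Frobenius on $E(\mathcal{O}_H/\mathfrak{N})$. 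Hence it satisfies conditions (1) and (2) of Lemma \ref{hecke}, and by the uniqueness of $\psi$ (conditions (ii), (iii) of \cite[Prop. 4.1]{count} being automatic when $\mathcal{O} = \mathcal{O}_K$, as already noted in the proof of Lemma \ref{hecke}) the two values coincide.

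I expect the only real difficulty to be notational bookkeeping rather than mathematics: one must confirm that the ``$\pi$'' occurring in \cite[Prop. 5.3, Prop. 6.2]{count} is normalized as a generator of $N_{H/K}(\mathfrak{N})$ in the same sense as the $\pi$ of Lemma \ref{hecke}, and that the correction symbol recorded in Definition \ref{comp} is precisely the one appearing there — not a Galois conjugate, complex conjugate, or inverse of it — so that passing between the abstract characterization and the explicit formula introduces no spurious unit. Once the conventions of \cite{count} are matched up, the statement is immediate.
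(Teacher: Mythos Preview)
Your proposal is correct and takes essentially the same approach as the paper: both simply defer to the explicit Frobenius formulas in \cite{count} (the paper cites Proposition~5.3 there directly, while you unpack the same citation together with Proposition~6.2 and the uniqueness from Proposition~4.1). Your version is more detailed, but the substance is identical.
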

\begin{proof}
    This is Proposition $5.3$ of \cite{count}. 
\end{proof}





\subsection{The $\mathcal{O}_K$-module generated by $P$}

Let $P\in E((\mathcal{O}_H/\mathfrak{N})[x]/(f_k))$, with $f_k\in \mathcal{O}_H[x]$ irreducible modulo each prime factor of $\mathfrak{N}$. We now specify some of the details of the structure of the $\mathcal{O}_K$-module $(P)$ generated by $P$. Let $\operatorname{ord}_{\mathfrak{N}} (P)$ denote the unique ideal such that $[\lambda]P\equiv O_E\mod \mathfrak{N}$ if and only if $\lambda\in \operatorname{ord}_{\mathfrak{N}} (P)$. In other words, $\ord_{\mathfrak{N}} (P)$ is the annihilator of $(P)$. We must check this is well defined, and that we can say something about it computationally. 

\begin{lemma}\label{ord} 
If $\mathfrak{N}$ is prime, $\ord_{\mathfrak{N}} (P)$ exists. Moreover, for some unit $u\in \mathcal{O}_K^\times$,  
\begin{align*}
\operatorname{ord}_{\mathfrak{N}} (P)\supset (u{N}_{H/K}(\mathfrak{N})^k-1)=(\psi(\mathfrak{N})^k-1). 
\end{align*}
\end{lemma}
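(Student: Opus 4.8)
The idea is to work in the finite field $F := (\mathcal{O}_H/\mathfrak{N})[x]/(f_k)$, which is a genuine field of cardinality $N_{H/\mathbb{Q}}(\mathfrak{N})^k = N^k$ (here using that $\mathfrak{N}$ is prime and $f_k$ is irreducible modulo $\mathfrak{N}$), together with its elliptic curve $E(F)$, and to identify the annihilator of $P$ in the $\mathcal{O}_K$-module structure coming from CM. First I would establish that $\ord_{\mathfrak{N}}(P)$ is well-defined: the set $\{\lambda \in \mathcal{O}_K : [\lambda]P \equiv O_E \bmod \mathfrak{N}\}$ is an ideal of $\mathcal{O}_K$ (it is closed under addition because $[\lambda + \mu]P = [\lambda]P + [\mu]P$ on $E(F)$, and absorbs multiplication because $[\alpha\lambda]P = [\alpha]([\lambda]P) = [\alpha]O_E = O_E$), and it is nonzero because $E(F)$ is finite, so some nonzero rational integer kills $P$. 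Since $\mathcal{O}_K$ is a Dedekind domain this ideal is the annihilator of the cyclic module $(P)$.

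Next, the containment. The key input is Lemma \ref{hecke} / Lemma \ref{easy2}: $\psi(\mathfrak{N}) \in \mathcal{O}_K$ is (a generator of $N_{H/K}(\mathfrak{N})$ and) the Frobenius endomorphism of $E$ over $\mathcal{O}_H/\mathfrak{N}$, acting as $[\psi(\mathfrak{N})]Q = \phi_{\mathfrak{N}}(Q)$ where $\phi_{\mathfrak{N}}$ raises coordinates to the $N_{H/\mathbb{Q}}(\mathfrak{N})$-th power $= N$-th power. Then on $E(F)$, where the coordinate field $F$ has $N^k$ elements, the full Frobenius of $F$ is the $N^k$-power map, which is $\phi_{\mathfrak{N}}^k = [\psi(\mathfrak{N})^k]$; since every point of $E(F)$ is fixed by the Frobenius of $F$, we get $[\psi(\mathfrak{N})^k - 1]P = O_E$ in $E(F)$, i.e. $[\psi(\mathfrak{N})^k - 1]P \equiv O_E \bmod \mathfrak{N}$. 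Hence $\psi(\mathfrak{N})^k - 1 \in \ord_{\mathfrak{N}}(P)$, so $\ord_{\mathfrak{N}}(P) \supseteq (\psi(\mathfrak{N})^k - 1)$. Finally, since $\psi(\mathfrak{N}) = u\,\pi$ with $u = \pm 1$ a unit (as $K \neq \mathbb{Q}[i], \mathbb{Q}[\sqrt{-3}]$) and $\pi = N_{H/K}(\mathfrak{N})$, I can rewrite $(\psi(\mathfrak{N})^k - 1) = (u^k N_{H/K}(\mathfrak{N})^k - 1)$ to match the stated form, absorbing $u^k$ into the notation $u$ of the statement.

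The step I expect to be the main obstacle — or at least the one needing the most care — is the identification of $\phi_{\mathfrak{N}}^k$ with the Frobenius of the field $F$ acting on $E(F)$. One must be careful that $P$ has coordinates in $F$ (not merely in some subextension), that the elliptic curve $E$ is already defined over $\mathcal{O}_H/\mathfrak{N}$ so that base-changing to $F$ and iterating Frobenius $k$ times genuinely gives the $N^k$-power map, and that the CM action commutes with reduction — this last point is exactly the remark quoted from Silverman's \emph{Advanced Topics} that $\operatorname{End}(E) \to \operatorname{End}(\widetilde{E})$ is an $\mathcal{O}_K$-module homomorphism, so $[\psi(\mathfrak{N})]$ really does reduce to the arithmetic Frobenius endomorphism. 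A secondary subtlety is that one should phrase the Frobenius-fixes-all-$F$-points statement at the level of projective coordinates in $E(F)$ rather than affine coordinates, but since $\phi_{\mathfrak{N}}$ acts coordinate-wise on $[x:y:z]$ this is immediate. Everything else (ideal-theoretic bookkeeping, the unit computation) is routine.
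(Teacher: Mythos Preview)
Your argument is correct and, for the containment $\ord_{\mathfrak N}(P)\supseteq(\psi(\mathfrak N)^k-1)$, essentially identical to the paper's: the paper's Lemma~\ref{size} is exactly your observation that the $k$-th iterate of the $N$-power Frobenius is the identity on $F=(\mathcal{O}_H/\mathfrak N)[x]/(f_k)$, just written out explicitly on a coordinate $\sum a_ix^i$ using $f_k=x^k-a$.

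Where you differ is in the existence step. You take the clean abstract route: the annihilator of $P$ is visibly an ideal, and it is nonzero because $E(F)$ is finite; done. The paper instead proves an auxiliary result (Lemma~\ref{annihilator}) characterising when a given ideal equals $\ord_{\mathfrak N}(P)$, and then \emph{constructs} the order by starting from $(\psi(\mathfrak N)^k-1)$ and successively dividing out those prime factors $\mathfrak h$ for which $[\mathfrak h^{-1}(\psi(\mathfrak N)^k-1)]P\equiv O_E$. Your approach is shorter and entirely sufficient for this lemma; the paper's constructive description buys an explicit algorithmic handle on the order and is reused later (e.g.\ in bounding $\ord_{\mathfrak p}(P)$ from below in the proof of Theorem~\ref{lens}).
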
 

To do this we need the help of two lemmata. 

\begin{lemma}\label{size}
    For all $P\in E((\mathcal{O}_H/\mathfrak{N})[x]/(f_k))$, $(\psi(\mathfrak{N})^k-1)P=O_E$ if $\mathfrak{N}$ is prime. 
\end{lemma}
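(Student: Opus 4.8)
The plan is to recognize $(\mathcal{O}_H/\mathfrak{N})[x]/(f_k)$ as a genuine finite field, to identify $\psi(\mathfrak{N})$ with the Frobenius endomorphism of the reduced curve, and then to use that the $q^k$-power Frobenius fixes precisely the points rational over the degree-$k$ extension.

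First I would record the algebraic setup. Since $\mathfrak{N}$ is prime, $\mathcal{O}_H/\mathfrak{N}$ is a finite field; write $q = N_{H/\mathbb{Q}}(\mathfrak{N})$ for its cardinality (by transitivity of the ideal norm and the standing hypotheses on $\mathfrak{N}$ one has in fact $q = N$, but this will not be needed). By assumption $f_k = x^k - a$ with $a$ a primitive $k$-th power non-residue, so $f_k$ is irreducible modulo $\mathfrak{N}$; hence $R := (\mathcal{O}_H/\mathfrak{N})[x]/(f_k)$ is the degree-$k$ extension field $\mathbb{F}_{q^k}$ of $\mathcal{O}_H/\mathfrak{N} = \mathbb{F}_q$. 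Consequently $E(R) = E(\mathbb{F}_{q^k}) = \widetilde{E}(\mathbb{F}_{q^k})$, where $\widetilde{E}$ denotes the (good) reduction of $E$ modulo $\mathfrak{N}$; and $\widetilde{E}(\mathbb{F}_{q^k})$ is exactly the set of points of $\widetilde{E}(\overline{\mathbb{F}}_q)$ whose coordinates lie in $\mathbb{F}_{q^k}$, i.e. the set of points fixed by the $q^k$-power Frobenius map.

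Next I would invoke Lemma~\ref{hecke} together with the Deuring statement quoted immediately after it: $\psi(\mathfrak{N}) \in \mathcal{O}_K = \operatorname{End}(E)$, and under the reduction map $\operatorname{End}(E) \to \operatorname{End}(\widetilde{E})$ — a ring homomorphism of $\mathcal{O}_K$-modules, see \cite{silver2} — the element $\psi(\mathfrak{N})$ is carried to the $q$-power Frobenius endomorphism $\phi_{E,q}$ of $\widetilde{E}$ (the sign ambiguity $\psi(\mathfrak{N}) = \pm\pi$ is pinned down precisely so that this holds, cf. Lemma~\ref{easy2}). Because reduction is multiplicative, $\psi(\mathfrak{N})^k$ acts on $\widetilde{E}(\overline{\mathbb{F}}_q)$ as $\phi_{E,q}^{\,k}$, namely the $q^k$-power Frobenius $[x:y:z] \mapsto [x^{q^k}:y^{q^k}:z^{q^k}]$.

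Finally, let $P \in E(R) = \widetilde{E}(\mathbb{F}_{q^k})$ be arbitrary. All of its coordinates lie in $\mathbb{F}_{q^k}$, so $P$ is fixed by the $q^k$-power Frobenius; combining with the previous step, $[\psi(\mathfrak{N})^k] P = \phi_{E,q}^{\,k}(P) = P$, that is, $(\psi(\mathfrak{N})^k - 1) P = O_E$. Since such multiplication-by-$\alpha$ identities may be carried out directly modulo $\mathfrak{N}$ (as in the remark $[\alpha](Q \bmod \mathfrak{N}) \equiv [\alpha] Q \bmod \mathfrak{N}$), the claim follows. The one delicate point is the identification in the previous paragraph: one must use that $\psi(\mathfrak{N})$ equals the Frobenius \emph{as an endomorphism} of $\widetilde{E}$, so that it acts as the $q$-power map on all of $\widetilde{E}(\overline{\mathbb{F}}_q)$ and not merely on $\mathbb{F}_q$-rational points — this is exactly what makes it legitimate to raise it to the $k$-th power and evaluate on points defined over the degree-$k$ extension; everything else is routine finite-field algebra.
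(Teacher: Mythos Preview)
Your proof is correct and follows essentially the same route as the paper: identify $\psi(\mathfrak{N})$ with the $q$-power Frobenius on the reduced curve, and conclude that its $k$-th power acts trivially on $\mathbb{F}_{q^k}$-rational points. The only difference is presentational: where you argue abstractly that the $q^k$-power Frobenius fixes $E(\mathbb{F}_{q^k})$, the paper computes the action explicitly on coordinates, showing that raising $\sum_i a_i x^i$ to the $n$-th power (with $n=\#\mathcal{O}_H/\mathfrak{N}$) yields $\sum_i a_i \zeta^i x^i$ for a $k$-th root of unity $\zeta$, whence $k$ iterations return to the identity --- a formula the paper reuses in Lemma~\ref{reduce} and Theorem~\ref{lens}.
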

\begin{proof}
    By definition, $\psi(\mathfrak{N}))[x_0:y_0:z_0]\equiv [x_0^n:y_0^n:z_0^n] \mod\mathfrak{N}$, where $n=\#\mathcal{O}_H/\mathfrak{N}$. We can write each projective coordinate as some polynomial $\sum_{i=0}^{k-1} a_i x^i$ in $(\mathcal{O}_H/\mathfrak{N})[x]/(f_k)$, with $a_i\in \mathcal{O}_H/\mathfrak{N}$. By the definition of $f_k$ and a Galois extension, $(\sum_{i=0}^{k-1} a_i x^i)^n=\sum_{i=0}^{k-1} a_i \zeta_{k}^i x^i$ for some $k$-th root of unity $\zeta_k\in \mathcal{O}_H$. Then lemma follows when raising to the $n$-th power $k$ times, since $[\psi(\mathfrak{N})^k]P\equiv P\mod \mathfrak{N}$. 
\end{proof}

Now we introduce \ref{annihilator}, using a similar methodology to \cite{suther} theorem 3.5 (a). 


\begin{lemma}\label{annihilator}
If $P\not\equiv O_E\mod \mathfrak{N}$ and $[\mathfrak{a}]P\equiv O_E\mod \mathfrak{N}$ for some ideal $\mathfrak{a}$ (if $[\lambda]P\equiv O_E\mod \mathfrak{N}$ for each $\lambda\in \mathfrak{a}$), and if there is an element $\lambda\in \frac{\mathfrak{a}}{\mathfrak{h}}$ such that $[\lambda]P\not\equiv O_E\mod \mathfrak{N}$, for each prime $\mathfrak{h}|\mathfrak{a}$, then $$[\lambda]P\equiv O_E\mod \mathfrak{N}\Leftrightarrow \lambda\in \mathfrak{a}.$$ 
\end{lemma}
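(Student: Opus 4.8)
The plan is to show that the set $\mathfrak{b} := \{\lambda \in \mathcal{O}_K : [\lambda]P \equiv O_E \mod \mathfrak{N}\}$ is exactly the ideal $\mathfrak{a}$; the asserted biconditional is then immediate, and as a byproduct this confirms that $\ord_{\mathfrak{N}}(P) = \mathfrak{a}$ is well defined. The first step is to verify that $\mathfrak{b}$ is an ideal of $\mathcal{O}_K$. Because $E/H$ has CM by $\mathcal{O}_K$ with all endomorphisms defined over $H$, and because reduction at a prime of good reduction is an $\mathcal{O}_K$-module homomorphism $\operatorname{End}(E) \to \operatorname{End}(\widetilde{E})$ (as recalled in the excerpt, so that working modulo the possibly composite $\mathfrak{N}$ is harmless), the $\mathcal{O}_K$-action on $E((\mathcal{O}_H/\mathfrak{N})[x]/(f_k))$ satisfies $[\lambda + \mu]P = [\lambda]P + [\mu]P$ and $[\alpha\lambda]P = [\alpha]([\lambda]P)$. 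Hence $\mathfrak{b}$ is closed under addition and under multiplication by arbitrary $\alpha \in \mathcal{O}_K$, i.e. it is an ideal; and $P \not\equiv O_E \mod \mathfrak{N}$ says $1 \notin \mathfrak{b}$, so $\mathfrak{b}$ is proper. The hypothesis $[\mathfrak{a}]P \equiv O_E \mod \mathfrak{N}$ gives exactly $\mathfrak{a} \subseteq \mathfrak{b}$, which already yields the ($\Leftarrow$) direction.

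For the ($\Rightarrow$) direction I would use the Dedekind structure of $\mathcal{O}_K$. From $\mathfrak{a} \subseteq \mathfrak{b}$ we get $\mathfrak{b} \mid \mathfrak{a}$, say $\mathfrak{a} = \mathfrak{b}\mathfrak{c}$ with $\mathfrak{c}$ an integral ideal; it suffices to prove $\mathfrak{c} = (1)$. Suppose $\mathfrak{c} \neq (1)$ and pick a prime $\mathfrak{h}$ with $\mathfrak{h} \mid \mathfrak{c}$. Since $\mathfrak{c} \mid \mathfrak{a}$ we also have $\mathfrak{h} \mid \mathfrak{a}$, so the hypothesis applies to this $\mathfrak{h}$: there is $\lambda \in \mathfrak{a}\mathfrak{h}^{-1}$ with $[\lambda]P \not\equiv O_E \mod \mathfrak{N}$, i.e. $\lambda \notin \mathfrak{b}$. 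But $\mathfrak{a}\mathfrak{h}^{-1} = \mathfrak{b}\,(\mathfrak{c}\mathfrak{h}^{-1})$ and $\mathfrak{c}\mathfrak{h}^{-1}$ is integral because $\mathfrak{h} \mid \mathfrak{c}$, so $\mathfrak{a}\mathfrak{h}^{-1} = \mathfrak{b} \cdot (\text{integral ideal}) \subseteq \mathfrak{b}$, forcing $\lambda \in \mathfrak{b}$, a contradiction. Therefore $\mathfrak{c} = (1)$, $\mathfrak{a} = \mathfrak{b}$, and $[\lambda]P \equiv O_E \mod \mathfrak{N} \iff \lambda \in \mathfrak{a}$.

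The main obstacle is not the arithmetic but getting the module-theoretic setup exactly right: one must be sure that $[\cdot]$ is genuinely an $\mathcal{O}_K$-module action on points with coordinates in $(\mathcal{O}_H/\mathfrak{N})[x]/(f_k)$ — in particular that additivity $[\lambda+\mu] = [\lambda] + [\mu]$ survives reduction modulo the composite ideal $\mathfrak{N}$ and base change to the ring $(\mathcal{O}_H/\mathfrak{N})[x]/(f_k)$ — since that is precisely what promotes $\mathfrak{b}$ from an arbitrary subset of $\mathcal{O}_K$ to an ideal. A secondary point to pin down is the meaning of the notation $\mathfrak{a}/\mathfrak{h}$ (namely $\mathfrak{a}\mathfrak{h}^{-1}$, which is integral since $\mathfrak{h}\mid\mathfrak{a}$) and the elementary fact that multiplying a fixed ideal $\mathfrak{b}$ by an integral ideal keeps one inside $\mathfrak{b}$; with those in hand the Dedekind-domain argument above is short and self-contained.
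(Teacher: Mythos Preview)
Your proof is correct and follows essentially the same route as the paper. The only cosmetic difference is packaging: you work with the full annihilator ideal $\mathfrak{b}$ and use $\mathfrak{b}\mid\mathfrak{a}$, whereas the paper picks a single offending $\lambda$ and argues with $\gcd(\mathfrak{a},(\lambda))=\mathfrak{a}+(\lambda)$; in both cases the key step is the Dedekind-domain fact that any ideal strictly containing $\mathfrak{a}$ must contain $\mathfrak{a}\mathfrak{h}^{-1}$ for some prime $\mathfrak{h}\mid\mathfrak{a}$, which then contradicts the hypothesis.
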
 
\begin{proof}
Assume that $[\lambda]P\equiv O_E\mod \mathfrak{N}$. Then further assuming $\lambda\not \in \mathfrak{a}$, we have that $[\gcd(\mathfrak{a},\lambda End(E))]P\equiv O_E\mod \mathfrak{N}$. If $\gcd(\mathfrak{a},\lambda End(E))=(1)$ then we have a contradiction. So assume that $\gcd(\mathfrak{a},\lambda End(E))$ is a proper ideal of $End(E)$. But then since $\gcd(\mathfrak{a},\lambda End(E))|\mathfrak{a}$ and since $\lambda\not\in \mathfrak{a}$ by assumption, we have that $\gcd(\mathfrak{a},\lambda End(E))\supsetneq \mathfrak{a}$, a contradiction because then for some prime $\mathfrak{h}|\mathfrak{a}$, $\gcd(\mathfrak{a},\lambda End(E))\supset \frac{\mathfrak{a}}{\mathfrak{h}}$. 
\end{proof}

\begin{proof}[Proof of Lemma \ref{ord}]
If $\mathfrak{N}$ is prime then $[\lambda]P\equiv O_E\mod \mathfrak{N}$ for every $\lambda\in (\psi(\mathfrak{N}))^k-1)$, by Lemma \ref{size}. If $[1]P=O_E\mod \mathfrak{N}$, then $(1)=\operatorname{ord}_{\mathfrak{N}} (P)$, and the condition that $[\lambda]P\not\equiv O_E\mod \mathfrak{N}$ for $\lambda\not\in (1)$, as well as uniqueness, is trivial. Otherwise we have that $[1]P\not\equiv O_E\mod \mathfrak{N}$. Then consider each prime $\mathfrak{h}|(\psi(\mathfrak{N})^k-1)$, and the corresponding ideal $\mathfrak{h}^{-1}(\psi(\mathfrak{N})^k-1)=\frac{(\psi(\mathfrak{N})^k-1)}{\mathfrak{h}}$. If for each $\mathfrak{h}$ we have that $[\lambda]P\not\equiv O_E\mod \mathfrak{N}$ for some $\lambda\in \mathfrak{h}^{-1}(\psi(\mathfrak{N})^k-1)$, then we have $\operatorname{ord}_{\mathfrak{N}} (P)=(\psi(\mathfrak{N})^k-1)$ by Lemma \ref{annihilator}. Alternatively, let $i$ index through the distinct $\mathfrak{h}_i|(\psi(\mathfrak{N})^k-1)$ such that $[\mathfrak{h}_i^{-1}\psi(\mathfrak{N})^k-1)]P\equiv O_E\mod \mathfrak{N}$. Then $\operatorname{ord}_{\mathfrak{N}} (P)=\frac{(\psi(\mathfrak{N})^k-1)}{\prod_i \mathfrak{h}_i}$ by construction and Lemma \ref{annihilator}. This divides $(\psi(\mathfrak{N})^k-1)$ and is unique again by Lemma \ref{annihilator}.



\end{proof}




Now let $\mathfrak{N}=\mathfrak{p}^n$ be a prime a power. By construction of $f_k$, $f_k$ is irreducible modulo $\mathfrak{p}$. Letting $R=(\mathcal{O}_H/\mathfrak{N})[x]/(f_k)$, we see that $R/\mathfrak{p}=(\mathcal{O}_H/\mathfrak{p})[x]/(f_k)$ is a field. Thus $\mathfrak{p}$ is a maximal ideal of $R$. From \cite{lensalg} we have 

\begin{lemma}
Let $R$ be a finite ring and $E/R$ an elliptic curve. The obvious projection map of groups $\phi: E(R)\rightarrow E(R/\mathfrak{m})$ is  a surjection with $\# \ker (\phi)=\# \mathfrak{m}$. 
\end{lemma}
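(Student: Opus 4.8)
The plan is to first reduce to the case where $R$ is a finite \emph{local} ring with maximal ideal $\mathfrak{m}$ (which is exactly the situation of interest, since $R=(\mathcal{O}_H/\mathfrak{p}^n)[x]/(f_k)$ is local with nilpotent maximal ideal $\mathfrak{p}R$). A general finite ring is a finite product of finite local rings by the Chinese Remainder Theorem; a maximal ideal $\mathfrak{m}$ of such a product replaces one factor by its own maximal ideal, $\phi$ acts as the identity on the complementary factors, and $E(-)$ commutes with finite products, so both the surjectivity and the cardinality $\#\ker\phi=\#\mathfrak{m}$ follow coordinatewise from the local case. So assume $R$ is local, $\mathfrak{m}$ nilpotent, residue field $k=R/\mathfrak{m}$. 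Write the curve as $W(X,Y)=Y^2+a_1XY+a_3Y-X^3-a_2X^2-a_4X-a_6=0$ with $\operatorname{disc}(E)\in R^\times$ (and homogenize to $\mathbb{P}^2$). That $\phi$ is a group homomorphism is immediate since the addition law is polynomial over $R$ and $O_E=[0:1:0]$ reduces to $O_E$, so only surjectivity and the kernel count remain.

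For the kernel, I would argue as follows. A point $P\in\ker\phi$ reduces to $[0:1:0]$, so its middle projective coordinate is a unit and $P$ has a unique normalized representative $[x_0:1:z_0]$ with $x_0,z_0\in\mathfrak{m}$. Substituting into the homogeneous Weierstrass equation and isolating $z_0$ gives $z_0\bigl(1+a_1x_0+a_3z_0-a_2x_0^2-a_4x_0z_0-a_6z_0^2\bigr)=x_0^3$, where the parenthesized factor lies in $1+\mathfrak{m}$ and hence is a unit. Thus for each $x_0\in\mathfrak{m}$ this is a fixed-point equation $z_0=x_0^3\,u(x_0,z_0)^{-1}$ which, because $\mathfrak{m}$ is nilpotent, has a unique solution $z_0\in\mathfrak{m}$ (iterate from $z_0=0$; uniqueness because the difference of two solutions factors as itself times a unit plus an element of $\mathfrak{m}$, forcing it to vanish). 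Hence $P\mapsto x_0$ is a bijection $\ker\phi\to\mathfrak{m}$ and $\#\ker\phi=\#\mathfrak{m}$. (This map is the formal-group parameter of $E$, but only its bijectivity is needed here.)

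For surjectivity, take $\bar P\in E(k)$. If $\bar P=O_E$ lift it to $O_E\in E(R)$. Otherwise $\bar P=(\bar x_0,\bar y_0)$ is affine; pick arbitrary lifts $x_0,y_0\in R$, so $W(x_0,y_0)\in\mathfrak{m}$. Since $E$ has unit discriminant it is smooth over $k$, so the gradient $(\partial_XW,\partial_YW)$ does not vanish at $\bar P$; therefore at least one of $\partial_XW(\bar x_0,\bar y_0)$, $\partial_YW(\bar x_0,\bar y_0)$ is nonzero in $k$, i.e.\ the corresponding partial derivative at $(x_0,y_0)$ is a unit in $R$. Applying Hensel's lemma over the local ring $R$ — which is Henselian because $\mathfrak{m}$ is nilpotent — to $W$ regarded as a one-variable polynomial in that coordinate, I can correct $y_0$ (resp.\ $x_0$) within $\mathfrak{m}$ to an exact solution of $W=0$ still reducing to $\bar P$. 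Hence $\phi$ is onto.

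The step I expect to be the main obstacle is surjectivity, specifically the subcase where $\partial_YW$ vanishes at $\bar P$ modulo $\mathfrak{m}$ (e.g.\ at a $2$-torsion point in residue characteristic $\ne 2$): there the naive Hensel lift in the $y$-variable fails, and one must invoke smoothness of $E$ to switch to lifting along the $x$-coordinate, while taking care that Hensel's lemma is being used in its correct form over a non-reduced (but finite, hence complete and Henselian) local ring. The kernel computation is comparatively routine once points near $O_E$ are written in the normalized chart $[x_0:1:z_0]$ and nilpotence of $\mathfrak{m}$ is used to run the fixed-point iteration.
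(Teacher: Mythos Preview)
Your proof is correct and self-contained. The paper, however, does not prove this lemma at all: it simply quotes the result from Lenstra's paper on elliptic-curve factoring (the reference \texttt{lensalg}) and moves on. So there is no ``paper's own proof'' to compare against.

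What you have written is essentially the standard argument underlying Lenstra's cited result: reduce to a finite local ring, identify the kernel of reduction with the formal group $\widehat{E}(\mathfrak{m})$ via the chart $[x_0:1:z_0]$ at $O_E$ (whence $\#\ker\phi=\#\mathfrak{m}$ by the bijection $t\leftrightarrow P$), and obtain surjectivity from formal smoothness of $E$ over $R$ via Hensel's lemma, switching between the $x$- and $y$-variable according to which partial derivative is a unit. One small remark: your CRT reduction tacitly assumes $R$ is commutative (so that a finite ring is a product of finite local rings); this is of course the intended setting, but you may want to say so explicitly. Otherwise the argument is clean, and in fact more informative than the paper's bare citation.
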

It should be emphasized that $\phi$ is a map of \emph{groups}. By the discussion above we can take $R=(\mathcal{O}_H/\mathfrak{N})[x]/(f_k)$. Now let $Q$ satisfy $\phi(Q)=O_E$. Then by the definition of a group hom, $\phi(Q\oplus Q...\oplus Q)=O_E$. Since $\# \mathfrak{p}=p^{n-1}$, and since the size of $(Q)$ (the subgroup of $\ker (\phi)$ generated by $Q$) is the number of unique sums, we have that Lagrange's theorem implies $\# (Q)|p^{n-1}$ and thus that the number of unique sums divides $p^{n-1}$. Note that $\phi(Q)=O_E$ implies by construction of $\phi$ that $Q\equiv 0\mod \mathfrak{p}$, and that the number of unique sums is the first $s$ such that $Q\oplus..._s\oplus Q\equiv O_E\mod \mathfrak{N}$. Further consider that for all $P$, $(\psi(\mathfrak{p})^k-1)P=O_E$ in $R$ by Lemma \ref{ord}. Putting this together: 
\begin{lemma}\label{poword}
Let $\mathfrak{N}=\mathfrak{p}^{n}$ be a prime power. Then $\ord_\mathfrak{N}(P)\supset p^{n-1}(\psi(\mathfrak{N})^k-1)$ exists. 
\end{lemma}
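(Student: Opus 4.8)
The plan is to reduce everything modulo the maximal ideal of $R:=(\mathcal{O}_H/\mathfrak{p}^{n})[x]/(f_k)$, invoke the already-proved prime case on the residue field, and then transfer the conclusion back up to $E(R)$ using that the kernel of reduction is a $p$-group of small exponent. As observed just above, $R$ is local with maximal ideal $\mathfrak{m}=\mathfrak{p}R$, residue field $R/\mathfrak{m}=(\mathcal{O}_H/\mathfrak{p})[x]/(f_k)$ a degree-$k$ extension of $\mathcal{O}_H/\mathfrak{p}$, and $\mathfrak{m}^{n}=\mathfrak{p}^{n}R=0$. Also, $\ord_{\mathfrak{N}}(P)$ is by definition the annihilator in $\operatorname{End}(E)=\mathcal{O}_K$ of the $\mathcal{O}_K$-module generated by $P$, hence automatically an ideal; so to settle existence it suffices to exhibit one nonzero element of it, which the containment below will do.

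First I would reduce modulo $\mathfrak{m}$. Applying Lemma \ref{size} with the prime ideal taken to be $\mathfrak{p}$ and the same $f_k$, every point of $E(R/\mathfrak{m})$ is killed by $\psi(\mathfrak{p})^{k}-1$. Writing $\phi\colon E(R)\to E(R/\mathfrak{m})$ for the reduction map, which is a homomorphism of groups by \cite{lensalg}, it follows that $Q:=[\psi(\mathfrak{p})^{k}-1]P$ lies in $\ker\phi$.

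The crux is then to show $[p^{n-1}]Q=O_E$. Classically $\ker\phi$ is isomorphic as a group to $\widehat{E}(\mathfrak{m})$, the formal group of $E$ evaluated at $\mathfrak{m}$, and it is filtered by the subgroups $\widehat{E}(\mathfrak{m})\supseteq\widehat{E}(\mathfrak{m}^{2})\supseteq\cdots\supseteq\widehat{E}(\mathfrak{m}^{n})=\{O_E\}$. Since the residue characteristic is $p$ we have $p\in\mathfrak{m}$, so from the shape $[p](T)=pT+(\text{higher order terms in }T)$ of the multiplication-by-$p$ power series one gets $[p]\,\widehat{E}(\mathfrak{m}^{i})\subseteq\widehat{E}(\mathfrak{m}^{i+1})$ for every $i$; iterating $n-1$ times yields $[p^{n-1}]\widehat{E}(\mathfrak{m})=\{O_E\}$, hence $[p^{n-1}]Q=O_E$. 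I expect this to be the main obstacle: the group-order lemma of \cite{lensalg} only says $\#\ker\phi=\#\mathfrak{m}$ is a power of $p$, which via Lagrange bounds the exponent of $\ker\phi$ merely by $N_{H/\mathbb{Q}}(\mathfrak{p})^{(n-1)k}$ — not good enough once the residue degree of $\mathfrak{p}$ or $k$ exceeds $1$ — so the formal-group filtration is what is actually needed to reach $p^{n-1}$.

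Finally I would assemble the pieces. The Hecke character of Lemma \ref{hecke} is multiplicative on ideals, so $\psi(\mathfrak{N})=\psi(\mathfrak{p})^{n}$ and
\[
\psi(\mathfrak{N})^{k}-1=\psi(\mathfrak{p})^{nk}-1=\bigl(\psi(\mathfrak{p})^{k}-1\bigr)\sum_{i=0}^{n-1}\psi(\mathfrak{p})^{ik},
\]
an $\mathcal{O}_K$-multiple of $\psi(\mathfrak{p})^{k}-1$. Consequently $[\,p^{n-1}(\psi(\mathfrak{N})^{k}-1)\,]P$ equals $[\sum_{i=0}^{n-1}\psi(\mathfrak{p})^{ik}]\,[p^{n-1}]\,[\psi(\mathfrak{p})^{k}-1]P=[\sum_{i=0}^{n-1}\psi(\mathfrak{p})^{ik}]\,[p^{n-1}]Q=O_E$ in $E(R)$. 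Since $p^{n-1}(\psi(\mathfrak{N})^{k}-1)$ is the principal ideal generated by this endomorphism of $E$, every element of it annihilates $P$ modulo $\mathfrak{N}$, i.e.\ $\ord_{\mathfrak{N}}(P)\supseteq p^{n-1}(\psi(\mathfrak{N})^{k}-1)$. This ideal is nonzero because $|\psi(\mathfrak{p})|=N_{H/\mathbb{Q}}(\mathfrak{p})^{1/2}>1$ forces $\psi(\mathfrak{N})^{k}\neq 1$, which in turn certifies the existence of $\ord_{\mathfrak{N}}(P)$.
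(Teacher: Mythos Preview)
Your proof follows the same architecture as the paper's: reduce modulo the maximal ideal $\mathfrak{m}=\mathfrak{p}R$, kill the image by $\psi(\mathfrak{p})^{k}-1$ via Lemma~\ref{size}, and then argue that the kernel of reduction is annihilated by $p^{n-1}$. The difference is entirely in this last step. The paper invokes the cited lemma $\#\ker\phi=\#\mathfrak{m}$ and Lagrange, asserting $\#\mathfrak{p}=p^{n-1}$; you instead use the formal-group filtration $\widehat{E}(\mathfrak{m})\supseteq\widehat{E}(\mathfrak{m}^{2})\supseteq\cdots$ together with $p\in\mathfrak{m}$ and $\mathfrak{m}^{n}=0$ to obtain the exponent bound $p^{n-1}$ directly.

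Your observation about the obstacle is well taken: the Lagrange route by itself only yields that the exponent of $\ker\phi$ divides $\#\mathfrak{m}=N_{H/\mathbb{Q}}(\mathfrak{p})^{k(n-1)}$, which is weaker than $p^{n-1}$ as soon as the residue degree of $\mathfrak{p}$ or $k$ exceeds~$1$. So the paper's claim ``$\#\mathfrak{p}=p^{n-1}$'' is at best an oversimplification, and your formal-group argument actually closes that gap and delivers the exponent the lemma statement requires. You also make explicit the passage from $\psi(\mathfrak{p})^{k}-1$ to $\psi(\mathfrak{N})^{k}-1$ via the multiplicativity of the Hecke character in Lemma~\ref{hecke}, a point the paper leaves implicit. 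In short: same skeleton, but your treatment of the kernel step is the more careful one.
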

\begin{proof}
    Use the above discussion and the proof of Lemma \ref{ord}, this time with $p^{n-1}(\psi(\mathfrak{N})^k-1)$ in place of $(\psi(\mathfrak{N})^k-1)$. 
\end{proof}

Now let $\mathfrak{N}$ be an arbitrary ideal. Iff $P\equiv O_E\mod \mathfrak{p^n}$ for each prime power dividing $\mathfrak{N}$, then $P\equiv O_E\mod \mathfrak{N}$. Thus 
\begin{lemma}\label{compord}
In general, 
$$\ord_\mathfrak{N}(P)\supset \operatorname{lcm}_{\mathfrak{p}^n|\mathfrak{N}} p^{n-1}(\psi(\mathfrak{N})^k-1)\supset \prod_{\mathfrak{p}^n|\mathfrak{N}} p^{n-1}(\psi(\mathfrak{N})^k-1)$$ exists. 
\end{lemma}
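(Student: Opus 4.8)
The plan is to reduce the general case to the prime-power case (Lemma \ref{poword}) via the Chinese-remainder structure of $\mathcal{O}_H/\mathfrak{N}$ and the ring $R=(\mathcal{O}_H/\mathfrak{N})[x]/(f_k)$. First I would factor $\mathfrak{N}=\prod_i \mathfrak{p}_i^{n_i}$ into prime powers; since the $\mathfrak{p}_i^{n_i}$ are pairwise coprime, the natural map $R \to \prod_i (\mathcal{O}_H/\mathfrak{p}_i^{n_i})[x]/(f_k)$ is an isomorphism of rings, hence $E(R)\cong \prod_i E\big((\mathcal{O}_H/\mathfrak{p}_i^{n_i})[x]/(f_k)\big)$ as groups and as $\mathcal{O}_K$-modules. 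Under this identification, a point $P\in E(R)$ corresponds to the tuple $(P_i)_i$ of its reductions, and the displayed equivalence in the statement — $P\equiv O_E \bmod \mathfrak{N}$ iff $P\equiv O_E \bmod \mathfrak{p}_i^{n_i}$ for every $i$ — is exactly the statement that the identity of $E(R)$ is the tuple of identities.

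Next I would handle the annihilator. For an element $\lambda\in\mathcal{O}_K$ we have $[\lambda]P\equiv O_E \bmod \mathfrak{N}$ iff $[\lambda]P_i\equiv O_E\bmod \mathfrak{p}_i^{n_i}$ for all $i$, i.e. iff $\lambda\in \ord_{\mathfrak{p}_i^{n_i}}(P_i)$ for all $i$, i.e. iff $\lambda\in \bigcap_i \ord_{\mathfrak{p}_i^{n_i}}(P_i)$. This shows $\ord_\mathfrak{N}(P)$ exists and equals $\bigcap_i \ord_{\mathfrak{p}_i^{n_i}}(P_i) = \operatorname{lcm}_i \ord_{\mathfrak{p}_i^{n_i}}(P_i)$, where the lcm of ideals is their intersection (equivalently, the gcd of the exponents taken componentwise, flipped — concretely $\operatorname{lcm}(\mathfrak{a},\mathfrak{b})$ is the largest ideal contained in both). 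By Lemma \ref{poword}, each $\ord_{\mathfrak{p}_i^{n_i}}(P_i) \supset p_i^{n_i-1}(\psi(\mathfrak{p}_i^{n_i})^k-1)$. Here I need to be slightly careful with notation: in the displayed inclusion of the lemma the symbol $\psi(\mathfrak{N})$ should be read prime-power-locally, so I would write the bound as $\operatorname{lcm}_{\mathfrak{p}^n\|\mathfrak{N}} p^{n-1}(\psi(\mathfrak{p}^n)^k-1)$ and note this is what the statement intends. Taking the lcm over $i$ of these inclusions gives $\ord_\mathfrak{N}(P)\supset \operatorname{lcm}_i p_i^{n_i-1}(\psi(\mathfrak{p}_i^{n_i})^k-1)$.

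Finally, the second inclusion $\operatorname{lcm}_{\mathfrak{p}^n|\mathfrak{N}} \mathfrak{c}_{\mathfrak{p}} \supset \prod_{\mathfrak{p}^n|\mathfrak{N}} \mathfrak{c}_{\mathfrak{p}}$ is the elementary fact that a finite product of ideals is contained in each factor and hence in the intersection (= lcm) of the factors; I would dispatch it in one line. The only genuinely delicate point, and the one I expect to be the main obstacle, is the bookkeeping around $\psi$: the Hecke-character value $\psi$ is defined for prime (and, via Definition \ref{comp}, prime-to-disc) ideals, and in the composite setting $\mathfrak{N}$ is not prime, so the expression "$\psi(\mathfrak{N})$" in Lemmas \ref{poword} and \ref{compord} must be interpreted as a stand-in for the collection of local quantities $\psi(\mathfrak{p}_i^{n_i})$ (or their norms via Lemma \ref{ord}, up to units). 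I would make this interpretation explicit at the start of the proof so that the lcm/product on the right-hand side is unambiguous; with that convention fixed, everything else is the routine CRT argument sketched above.
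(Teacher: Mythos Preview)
Your proposal is correct and follows essentially the same route as the paper: the paper's proof is a one-line invocation of the CRT observation ``$P\equiv O_E\bmod\mathfrak{N}$ iff $P\equiv O_E\bmod\mathfrak{p}^n$ for each prime power $\mathfrak{p}^n\|\mathfrak{N}$'' together with Lemma~\ref{poword} and the existence argument from the proof of Lemma~\ref{ord}, which is exactly what you unpack. Your caution about the meaning of $\psi(\mathfrak{N})$ in the composite setting is well placed---the paper is terse here and relies on Definition~\ref{comp} (which is formulated for arbitrary $\mathfrak{N}$ prime to the discriminant) together with the local prime-power inputs from Lemma~\ref{poword}; making this explicit, as you do, only clarifies the argument.
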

\begin{proof}
    Use the above discussion and the proof of Lemma \ref{poword}, along with the proof of Lemma \ref{ord}, this time with $\operatorname{lcm}_{\mathfrak{p}^n|\mathfrak{N}} p^{n-1}(\psi(\mathfrak{N})^k-1)$ in place of $(\psi(\mathfrak{N})^k-1)$. 
\end{proof}

\subsection{Main Theoretical Results}

In this section assume that $E/H$ has good reduction modulo $\mathfrak{N}$. To begin this section we should remark the following. 
\begin{remark}\label{rem}
    For $\mathfrak{p}\subset \mathcal{O}_H$, we have that $N_{H/\mathbb{Q}}(\mathfrak{p})=p^j$ is a prime power. Thus if we first check that the integers $N$ we test are not perfect powers, we can test the primality of $N=N_{H/\mathbb{Q}}(\mathfrak{N})$ by testing the primality of $\mathfrak{N}$.
\end{remark}

\begin{remark}
    The trial division steps in this section will be proven in the proceeding one. Additionally, an algorithm to complete the trial division will be given. 
\end{remark}
We now state our analogue of Lenstra's theorem for CM elliptic curves. Since we cannot directly check that $f_k$ is irreducible modulo all prime factors $\mathfrak{p}$ of $\mathfrak{N}$, we note that we can relax the assumption that $f_k$ is irreducible modulo each prime factor:

\begin{lemma}\label{reduce}
    Let $f_k=x^k-a$ for an arbitrary element $a\in \mathcal{O}_H$ with $a\not\in \mathfrak{p}$. Let $P=[x_0:y_0:z_0]\in E((\mathcal{O}_H/\mathfrak{p})[x]/(f_k))$. Then $[\psi(\mathfrak{p})] P\equiv [\sigma_g x_0: \sigma_g y_0: \sigma_g z_0]\mod \mathfrak{p}$ for $\sigma_g(\sum a_i x^i)=\sum a_i \zeta^i_g x^i$ for $\zeta_g$ some $k$-th root of unity modulo $(\mathfrak{p},f_k)$. In particular, $\ord_\mathfrak{p}([P])|\psi(\mathfrak{p})^k-1$. 
\end{lemma}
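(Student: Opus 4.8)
The plan is to mimic the argument of Lemma \ref{size}, but carried out one prime $\mathfrak{p}$ at a time and without assuming $f_k$ is irreducible there. First I would reduce to the residue field: pass from $(\mathcal{O}_H/\mathfrak{p})[x]/(f_k)$ to $(\mathcal{O}_H/\mathfrak{p})[x]/(g)$ for an irreducible factor $g\mid f_k$ modulo $\mathfrak{p}$, which is a finite field $\mathbb{F}$ of cardinality $N_{H/\mathbb{Q}}(\mathfrak{p})^{\deg g}$; as noted in the Notation subsection, $(f_k)\subset (g)$, so any identity of points proved modulo $(\mathfrak{p},g)$ that only involves the coefficients $a_i$ of the $x$-expansions will lift back to the statement modulo $(\mathfrak{p},f_k)$. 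By Lemma \ref{hecke}/Lemma \ref{easy2}, the Hecke value $\psi(\mathfrak{p})\in\mathcal{O}_K$ acts on $E$ over $\mathcal{O}_H/\mathfrak{p}$ as the Frobenius endomorphism $\phi_{E,N_{H/\mathbb{Q}}(\mathfrak{p})}$; since $\operatorname{End}(E)\to\operatorname{End}(\widetilde E)$ is an $\mathcal{O}_K$-module map, $[\psi(\mathfrak{p})]$ computed over the ring $(\mathcal{O}_H/\mathfrak{p})[x]/(f_k)$ agrees with applying the $N_{H/\mathbb{Q}}(\mathfrak{p})$-power Frobenius coordinate-wise — this is the content of the universal-reduction remark before Lemma \ref{hecke}. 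So I would assert $[\psi(\mathfrak{p})]P \equiv [x_0^n : y_0^n : z_0^n] \bmod (\mathfrak{p},f_k)$ where $n=N_{H/\mathbb{Q}}(\mathfrak{p})=\#(\mathcal{O}_H/\mathfrak{p})$.

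Next I would compute the $n$-power map on a general coordinate $w=\sum_{i=0}^{k-1} a_i x^i$. Over $\mathbb{F}=(\mathcal{O}_H/\mathfrak{p})[x]/(g)$ the $n$-power Frobenius is a ring endomorphism fixing $\mathcal{O}_H/\mathfrak{p}$ pointwise (as $n$ is a power of the residue characteristic of $\mathfrak{p}$ and $\mathcal{O}_H/\mathfrak{p}$ is the prime-power residue field... more precisely, $a^n=a$ for $a\in\mathcal{O}_H/\mathfrak{p}$ when $n$ is its cardinality), so $w^n = \sum a_i (x^n)^i = \sum a_i (x \cdot x^{n-1})^i$. The key point is that $x^{n-1}$ is a unit in $\mathbb{F}$ and satisfies $(x^{n-1})^k = (x^k)^{n-1} = a^{n-1}$; since $a\notin\mathfrak{p}$, reducing the defining relation shows $a^n = a$ in $\mathcal{O}_H/\mathfrak{p}$ is wrong — rather I need $x^k = a$ in $\mathbb{F}$, hence $(x^{n-1})^k = a^{n-1} = 1$ in $\mathbb{F}$ because $a\in(\mathcal{O}_H/\mathfrak{p})^\times$ has order dividing $n-1$. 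Therefore $\zeta := x^{n-1}$ is a $k$-th root of unity in $\mathbb{F}$, and $w^n = \sum a_i \zeta^i x^i = \sigma(w)$ where $\sigma(\sum a_i x^i) := \sum a_i \zeta^i x^i$. Applying this coordinate-wise to $P$ gives exactly $[\psi(\mathfrak{p})]P \equiv [\sigma x_0 : \sigma y_0 : \sigma z_0]$. The same $\zeta$ works for all three coordinates since it depends only on $\mathfrak{p}$ and $f_k$, not on $P$.

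For the final clause, iterate: $\sigma^k(\sum a_i x^i) = \sum a_i \zeta^{ki} x^i = \sum a_i x^i$ since $\zeta^k=1$, so applying $[\psi(\mathfrak{p})]$ a total of $k$ times returns $P$, i.e. $[\psi(\mathfrak{p})^k]P \equiv P \bmod (\mathfrak{p},f_k)$, equivalently $[\psi(\mathfrak{p})^k-1]P \equiv O_E$. Because this holds for every $\lambda$ in the ideal $(\psi(\mathfrak{p})^k-1)$ (the module $(P)$ being killed by the whole ideal once it is killed by the generator, $\operatorname{End}(E)=\mathcal{O}_K$ acting), the annihilator $\operatorname{ord}_{\mathfrak{p}}([P])$ contains $(\psi(\mathfrak{p})^k-1)$, which is the divisibility claim $\operatorname{ord}_{\mathfrak{p}}([P]) \mid \psi(\mathfrak{p})^k-1$ in the ideal-theoretic sense. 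I would invoke the argument of Lemma \ref{ord} (its existence-and-uniqueness part) verbatim to know $\operatorname{ord}_{\mathfrak{p}}([P])$ is well-defined here, since $\mathfrak{p}$ is prime.

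The main obstacle I anticipate is the bookkeeping in the reduction step: making precise that a point identity proved in the genuine field $(\mathcal{O}_H/\mathfrak{p})[x]/(g)$ can be pulled back to a statement about the (possibly non-reduced, non-domain) ring $(\mathcal{O}_H/\mathfrak{p})[x]/(f_k)$, given that the Frobenius $w\mapsto w^n$ and the twist $\sigma$ a priori depend on which factor $g$ one chooses. The resolution is that the statement is only claimed modulo $\mathfrak{p}$ (i.e. after reduction into the field), and $\sigma$ is defined combinatorially on coefficient vectors by a single root of unity $\zeta$, so one must check $\zeta=x^{n-1}$ gives a consistent $k$-th root of unity independent of $g$ — which follows because $x^k\equiv a$ already in $(\mathcal{O}_H/\mathfrak{p})[x]/(f_k)$ before passing to any $g$, so $x^{n-1}$ raised to the $k$ equals $a^{n-1}$, and $a^{n-1}=1$ holds as soon as one is in a field where $a$ is a unit of order dividing $n-1$; some care is needed if $\deg g>1$ (then "$n$" should be the cardinality of the field, not of $\mathcal{O}_H/\mathfrak{p}$), but the lemma as stated only asserts the existence of *some* $k$-th root of unity $\zeta_g$, which is exactly what this gives.
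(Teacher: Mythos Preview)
Your approach is essentially the same as the paper's: reduce modulo an irreducible factor $g\mid f_k$, identify $[\psi(\mathfrak{p})]$ with the $n$-power Frobenius on coordinates where $n=\#(\mathcal{O}_H/\mathfrak{p})$, and then use $x^k\equiv a$ together with $a^{n-1}\equiv 1$ in $\mathcal{O}_H/\mathfrak{p}$ to see that each coordinate $\sum a_i x^i$ is sent to $\sum a_i\zeta^i x^i$ for a $k$-th root of unity $\zeta$; the order claim then follows by iterating $k$ times and invoking Lemma~\ref{ord}. Your write-up is in fact more explicit than the paper's (you identify $\zeta_g=x^{n-1}$ concretely and verify $\zeta_g^k=a^{n-1}=1$, whereas the paper just gestures at ``binomial theorem modulo a prime and choice of $f_k$''), and your closing remark that $x^k\equiv a$ already holds in $(\mathcal{O}_H/\mathfrak{p})[x]/(f_k)$ before choosing $g$ correctly disposes of the consistency worry you raise.
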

\begin{proof}
    By standard theory, the Hecke character $\psi(\mathfrak{p})$ is a well-defined endomorphism of $E((\mathcal{O}_H/\mathfrak{p})[x]/(g))$ for an irreducible factor $g$ of $f_k$ modulo $\mathfrak{p}$. Consider $P$ as a representative of the equivalence class $[P]=(P\mod (g))$ in $E((\mathcal{O}_H/\mathfrak{p})[x]/(g))$. Then we may compute a representative of $[\psi(\mathfrak{p})] [P]$ as $[\psi(\mathfrak{p})]P$. Let $P=[x_0:y_0:z_0]$, then for $p^j=\# \mathcal{O}_H/\mathfrak{p}$, $[\psi(\mathfrak{p})]P=[x_0^{p^j}:y_0^{p^j}:z_0^{p^j}]=[\sigma_g x_0:\sigma_g y_0: \sigma_g z_0]\mod \mathfrak{p}$, since each coordinate may be expressed as $\sum a_i x^i$ in $(\mathcal{O}_H/\mathfrak{p})[x]/(f_k)$ (use the binomial theorem modulo a prime and choice of $f_k$). Since $[\psi(\mathfrak{p})^k-1]P\equiv O_E\mod \mathfrak{p}$, the latter conclusion follows by the proof of Lemma \ref{ord}. 
\end{proof}

For the following theorem, let $\rho_j$ be a map given by $\rho_j(\sum a_i x^i)=\sum a_i \zeta_j^i x^i$ for a $\zeta_j$ a \emph{primitive} $k$-th root of unity modulo $(\mathfrak{N},f_k)$. Further (after reducing $\zeta_j^m-1$ modulo $f_k$) let $\gcd(\mathfrak{N},\zeta_j^m-1)=(1)$ for $1\leq j<k$. This ensures that $\zeta_j$ is a primitive $k$-th root of unity modulo $(\mathfrak{p},f_k)$. 

\begin{theorem}\label{lens} 
Let notation be as above and fix some $\mathfrak{N}\nmid (2)$. Assume that $N>1\neq n^r$ for some integer $n$ and $r>1$.  Let $(\psi(\mathfrak{N})^k-1)=\Gamma \Lambda$, with $\Lambda=(\lambda)$ principal, the primary factorization $\prod \mathfrak{q}^{e_q}$ of $\Lambda$ known, and ${N_{K/\mathbb{Q}}(\Lambda)}>N^{1/2}$. If one can choose an $f_k=x^k-a$ with $a$ a primitive $k$-th non-residue modulo $\mathfrak{N}$, and if for each $\mathfrak{q}$, $\exists P_q=[x_q:y_q:z_q]\in {E}((\mathcal{O}_F/\mathfrak{N})[x]/(f_k))$ such that 
\begin{align*}
&\text{(1) } [\lambda]P_q=O_E,\\
&\text{(2) } [\lambda_{\mathfrak{q}}]P_q \text{ is strongly nonzero modulo $(\mathfrak{N},f_k)$}, \text{ where $\lambda_\mathfrak{q}$}\\ &\text{is some element of ${\Lambda}/{\mathfrak{q}}$},\\ 
&\text{(3) } [\psi(\mathfrak{N})^m]P_q=[\rho_j^m x_q: \rho_j^m y_q: \rho_j^m z_q], \text{ for some $j$ and $1\leq m\leq k$,}
\end{align*}
then $\psi(\mathfrak{p})=\psi(\mathfrak{N})^m$ in $\mathcal{O}_K/(\Lambda)$ for $m=1,...,k$.  If further none of the $O(4(-d)+4d^2)$ residues $\beta$ of $\psi(\mathfrak{N})^m$, $m=1,...,k$ with $N_{K/\mathbb{Q}}(\beta)\leq N_{K/\mathbb{Q}}(\Lambda)$ have that $N_{K/\mathbb{Q}}(\beta)$ properly divides $N$, then $N$ is prime. 

\end{theorem}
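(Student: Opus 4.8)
The plan is to mimic the structure of the classical Lenstra finite-fields test (Theorem 1.1), replacing the multiplicative group $(\mathbb{Z}/N\mathbb{Z})[x]/(f)^\times$ with the $\mathcal{O}_K$-module $E((\mathcal{O}_H/\mathfrak{N})[x]/(f_k))$ and replacing the divisibility $F \mid N^k-1$ with the ideal-theoretic statement $\Lambda \mid (\psi(\mathfrak{N})^k-1)$ supplied by Lemmas \ref{size}, \ref{ord}, \ref{reduce}. The first step is to fix an arbitrary prime $\mathfrak{p} \mid \mathfrak{N}$ and work modulo $\mathfrak{p}$, where $\psi(\mathfrak{p})$ is an honest Frobenius endomorphism by Lemma \ref{hecke}. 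Reducing conditions (1) and (2) modulo $\mathfrak{p}$: since $[\lambda]P_q \equiv O_E \bmod \mathfrak{p}$ and $[\lambda_\mathfrak{q}]P_q \not\equiv O_E \bmod \mathfrak{p}$ (strong non-vanishing passes to each prime factor), Lemma \ref{annihilator} forces $\ord_\mathfrak{p}([P_q]) = \Lambda \cdot \mathfrak{h}_q^{-1}$-type divisors — more precisely $\mathfrak{q}^{e_q} \mid \ord_\mathfrak{p}([P_q])$ while $\mathfrak{q}^{e_q}$ does not divide $\ord_\mathfrak{p}$ of anything killed by $\lambda_\mathfrak{q}$. Taking the product over all $\mathfrak{q} \mid \Lambda$ and using the $\mathcal{O}_K$-module structure (the compositum of the cyclic submodules generated by the $P_q$), I get a submodule of $E((\mathcal{O}_H/\mathfrak{p})[x]/(f_k))$ whose annihilator is exactly $\Lambda$, hence $N_{K/\mathbb{Q}}(\Lambda)$ divides $\#E((\mathcal{O}_H/\mathfrak{p})[x]/(f_k))$ after accounting for the $\mathcal{O}_K$-structure; equivalently $\Lambda$ divides $(\psi(\mathfrak{p})^h - 1)$ where $h \le k$ is the residue degree of $\mathfrak{p}$ in $(f_k)$.

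The second step is the key algebraic identity $\psi(\mathfrak{p}) \equiv \psi(\mathfrak{N})^m \pmod{\Lambda}$ for the appropriate $m$. Here condition (3) does the work: $[\psi(\mathfrak{N})^m]P_q$ acts on coordinates as the Galois-type operator $\rho_j^m$, which by Lemma \ref{reduce} is precisely how $[\psi(\mathfrak{p})^{\,?}]$ acts (Frobenius raises coordinates to the $\#(\mathcal{O}_H/\mathfrak{p})$ power, which permutes the $x^i$ by a $k$-th root of unity). Since $\zeta_j$ was chosen primitive modulo $(\mathfrak{N},f_k)$ and $\gcd(\mathfrak{N},\zeta_j^m-1)=(1)$ guarantees primitivity modulo $(\mathfrak{p},f_k)$, matching the two actions on a strongly non-zero point pins down $\psi(\mathfrak{p})$ as $\psi(\mathfrak{N})^m$ modulo the annihilator $\Lambda$ of that point — this is the analogue of condition (3) in Lenstra's test forcing the Frobenius of any prime factor to be a power of the "global" Frobenius. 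I would run this for each $m = 1,\dots,k$ to get $\psi(\mathfrak{p}) \equiv \psi(\mathfrak{N})^m \pmod \Lambda$ (the different $m$ correspond to which power of $N^j \bmod F$ one lands in, exactly as the residues $N^j \bmod F$ appear in Theorem 1.1).

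The third step converts the congruence into a size bound. From $\psi(\mathfrak{p}) \equiv \psi(\mathfrak{N})^m \pmod \Lambda$ and $N_{K/\mathbb{Q}}(\Lambda) > N^{1/2} \ge N_{H/\mathbb{Q}}(\mathfrak{p})^{1/2} / (\text{something})$ — using $N_{H/\mathbb{Q}}(\mathfrak{p}) \le N$ — together with the Hasse-type bound $N_{K/\mathbb{Q}}(\psi(\mathfrak{p})) = N_{H/\mathbb{Q}}(\mathfrak{p})$ (or a prime power thereof), I conclude that $\psi(\mathfrak{p})$ is congruent modulo $\Lambda$ to one of a bounded list of small elements, namely the residues $\beta$ of $\psi(\mathfrak{N})^m$ with $N_{K/\mathbb{Q}}(\beta) \le N_{K/\mathbb{Q}}(\Lambda)$; the count $O(4(-d)+4d^2)$ comes from counting lattice points of bounded norm in a residue class modulo $\Lambda$ (the $-d$ and $d^2$ from the covolume/shape of $\mathcal{O}_K$ and the triangle inequality on norms). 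Since $N_{K/\mathbb{Q}}(\psi(\mathfrak{p})) = N_{H/\mathbb{Q}}(\mathfrak{p}) \mid N$, each such $\beta$ gives $N_{K/\mathbb{Q}}(\beta) \mid N$; the hypothesis that none of these properly divides $N$ forces $N_{H/\mathbb{Q}}(\mathfrak{p}) = N$, so $\mathfrak{N} = \mathfrak{p}$ is prime, and by Remark \ref{rem} (after the perfect-power check $N \ne n^r$) $N$ itself is prime.

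The main obstacle I expect is the bookkeeping in step two: making rigorous the claim that condition (3), stated for the explicit point $P_q$ over the possibly-composite ring $(\mathcal{O}_H/\mathfrak{N})[x]/(f_k)$, actually determines $\psi(\mathfrak{p})$ modulo $\Lambda$ and not merely modulo some smaller ideal, and handling the interaction between the root-of-unity operator $\rho_j$ (which a priori depends on the choice of irreducible factor $g \mid f_k$ modulo $\mathfrak{p}$, via Lemma \ref{reduce}) and the requirement that the resulting $m$ be uniform across all $\mathfrak{q}$. One must check that strong non-vanishing of $[\lambda_\mathfrak{q}]P_q$ modulo $(\mathfrak{N},f_k)$ is strong enough to prevent degeneration modulo individual $\mathfrak{p}$, and that the "trial division" over the $O(4(-d)+4d^2)$ candidate $\beta$'s genuinely exhausts all $\mathfrak{p} \mid \mathfrak{N}$ simultaneously rather than one at a time — this is where the argument is deferred to the next section, so I would cite that forthcoming trial-division lemma at the appropriate point and otherwise follow the Lenstra template faithfully.
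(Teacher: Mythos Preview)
Your proposal is correct and follows essentially the same three-step architecture as the paper's proof: use conditions (1)--(2) together with Lemma~\ref{annihilator} to force $\mathfrak{q}^{e_q}\mid \ord_{\mathfrak{p}}(P_q)$ for every prime $\mathfrak{p}\mid\mathfrak{N}$, use condition (3) and Lemma~\ref{reduce} to match the coordinate-wise action of $\psi(\mathfrak{N})^m$ with that of the genuine Frobenius $\psi(\mathfrak{p})$ and deduce $\psi(\mathfrak{N})^m-\psi(\mathfrak{p})\in\mathfrak{q}^{e_q}$, then combine over $\mathfrak{q}\mid\Lambda$ and finish by trial division via Theorem~\ref{resi} and Remark~\ref{rem}. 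The obstacle you flag---that the exponent $m$ obtained from matching $\zeta_j^m\equiv\zeta_g$ might a~priori depend on $\mathfrak{q}$ through the choice of $j$---is real and is in fact glossed over in the paper's own proof; the saving grace is that the trial-division step checks \emph{all} $m=1,\dots,k$ anyway, so one only needs that $\psi(\mathfrak{p})$ lies in the finite list $\{\psi(\mathfrak{N})^m\bmod\Lambda\}$ rather than pinning down a single uniform $m$.
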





\begin{proof}
Assume that conditions $(1)$ and $(2)$ hold for some prime divisor $\mathfrak{q}|\Lambda$. Reducing modulo each $\mathfrak{p}$, we can consider our calculations done over the field $(\mathcal{O}_H/\mathfrak{N})[x]/(g)$ for some $g|f_k$ irreducible modulo $\mathfrak{p}$. Condition $(1)$ yields that $\operatorname{ord}_{\mathfrak{p}}(P_q)\supset \mathfrak{q}^{v_\mathfrak{q}(\Lambda)}$ by definition of order, for each prime ideal $\mathfrak{p}|\mathfrak{N}$. Condition $(2)$ yields that there is an element $\lambda_\mathfrak{q}$ of $\Lambda/\mathfrak{q}$, and thus of $\mathfrak{q}^{v_\mathfrak{q}(\Lambda)-1}$ such that $[\lambda]P_q\not\equiv O_E\mod \mathfrak{p}$. Thus $\operatorname{ord}_{\mathfrak{p}}(P_q)\not\supset \mathfrak{q}^{v_\mathfrak{q}(\Lambda)-1}$ by definition of order. So we have that $\operatorname{ord}_{\mathfrak{p}}(P_q)\subset \mathfrak{q}^{v_\mathfrak{q}(\Lambda)}$ by primality. Note this holds for each $\mathfrak{q}|\Lambda$.

By condition $(3)$, $[\psi(\mathfrak{N})^m]P_q=[\rho_{j}^mx_q:\rho_{j}^my_q:\rho_{j}^mz_q]$, $1\leq m<k$. Writing $z_q=\sum a_i x^i$, by construction of $\rho_j$, 

$$\rho_{j}^m z_q\equiv \sum a_i \zeta^{im}_j x^i\not\equiv z_q\mod \mathfrak{p}, 1\leq m<k$$
since $\zeta_j$ is a primitive $k$-th root of unity modulo $\mathfrak{p}$. Further $\rho_{j}^k z_q\equiv z_q\mod \mathfrak{p}$ (and similarly for $x_q,y_q$). By Lemma \ref{reduce}, $[\psi(\mathfrak{p})]P_q\equiv [\sigma_g x_q:\sigma_g y_q:\sigma_g z_q]\mod \mathfrak{N}$. In particular $\sigma_g z_g=\sum a_i \zeta_g^i x^i$ for some $k$-th root of unity modulo $(\mathfrak{p},f_k)$ and so  
\begin{align*}
[\psi(\mathfrak{N})^m]P_q&\equiv [\rho_{j}^m x_q:\rho_{j}^m y_q:\rho_{j}^m z_q]\\
&\equiv [\rho_j^m x_q:\rho_j^m y_q:\sum a_i \zeta^{im}_j x^i]\\
&\equiv [\sigma_{g} x_q:\sigma_{g} y_q:\sigma_{g} z_q]\equiv [\psi(\mathfrak{p})]P_q \mod \mathfrak{p}
\end{align*} 
for $1\leq m<k$ with $\zeta_j^{m}\equiv \zeta_g\mod \mathfrak{p}$ (such an $m$ exists by primitivity). In particular we know that $[\psi(\mathfrak{N})^m-\psi(\mathfrak{p})]P_q\equiv O_E\mod \mathfrak{p}$. By the results of the first paragraph and Lemma \ref{ord}, $\psi(\mathfrak{N})^m-\psi(\mathfrak{p})\in \mathfrak{q}^{e_q}$. Since this is true for each prime $\mathfrak{q}|\Lambda$, $\psi(\mathfrak{N})^m-\psi(\mathfrak{p})\equiv 0\mod \Lambda$. 

Assume now that none of the residues $\beta$ of $\psi(\mathfrak{N})^m$, $m=1,...,k$ modulo $\Lambda$ with $N_{K/\mathbb{Q}}(\beta)\leq N^{1/2}$ have $N_{K/\mathbb{Q}}(\beta)|N$. Then for every distinct prime divisor $\mathfrak{p}|\mathfrak{N}$, $N_{K/\mathbb{Q}}(\psi(\mathfrak{N}))> N^{1/2}$ and $N_{K/\mathbb{Q}}(\psi(\mathfrak{N}))|N$. Clearly there can thus be only one distinct prime divisor $\mathfrak{p}$ of $\mathfrak{N}$. If $\mathfrak{p}^r=\mathfrak{N}$ for $r>1$, then by norm multiplicativity, $N=p^r$, which is a contradiction since we assumed $N$ was not a prime power. Thus $\mathfrak{N}$ is prime. Since we assumed $N$ is not a prime power, $N$ is also prime by Remark \ref{rem}. 

By Theorem \ref{resi}, there are $O(4(-d)+4d^2)$ residues to check for each given $m$. 

\end{proof}



In practice, an issue that arises with condition $(3)$ of the initial test is that one must compute the action of the (non-integer) complex multiplication isogenies in precomputation. There is no repository of such isogenies known to the author online. Additionally, the test, although a sufficient condition for primality, is not a necessary one, and one that runs in $\widetilde{O}(\log^3 N)$ for fixed $k$, see the analysis of Lemma \ref{runtime}, Remark \ref{a} (an asymptotically similar runtime to Lenstra's criterion). We will now introduce the framework for a Las Vegas test for primality that runs in average time $\widetilde{O}(\log^2 N)$ on certain classes of integers, serves as an \emph{efficient primality test}, and certifies primality in one run for nearly all choices of input points.

We begin with some lemmata. We say a solution to $x^k=1$ is \emph{$k$-primitive} if $x^m\neq 1$ for $0<m<k$. We have the following well-known lemma. 

\begin{lemma}\label{sim} 
Let $\mathfrak{p}$ be a prime ideal which is not inert and $\mathfrak{p}\nmid 2$. Then $\mathcal{O}_K/\mathfrak{p}^n$ is generated by one element when considered as a multiplicative group. In particular, if $x^k=1$ in $\mathcal{O}_K/\mathfrak{p}^n$ has a $k$-primitive solution, then it has precisely $k$ solutions. 
\end{lemma}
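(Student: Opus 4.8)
The plan is to prove Lemma~\ref{sim} in two stages: first establish that the unit group $(\mathcal{O}_K/\mathfrak{p}^n)^\times$ is cyclic, and then deduce the statement about $k$-primitive solutions as a formal consequence of cyclicity. For the first stage, I would use that $\mathfrak{p}$ is not inert, so that the residue field $\mathcal{O}_K/\mathfrak{p}$ is just $\mathbb{F}_p$ (if $\mathfrak{p}$ splits) and in any case a finite field. Concretely, if $\mathfrak{p}$ lies above the rational prime $p$ with $\mathfrak{p}\nmid 2$, then the localization $(\mathcal{O}_K)_{\mathfrak{p}}$ is a complete (or at least a discrete valuation) ring whose residue field is $\mathbb{F}_{p^f}$ with $f=1$ here since $\mathfrak{p}$ is not inert. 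The standard structure theory of $(\mathcal{O}/\mathfrak{p}^n)^\times$ for a DVR-like local ring gives a split exact sequence
\begin{align*}
1\to (1+\mathfrak{p})/(1+\mathfrak{p}^n)\to (\mathcal{O}_K/\mathfrak{p}^n)^\times \to (\mathcal{O}_K/\mathfrak{p})^\times\to 1,
\end{align*}
where the quotient $(\mathcal{O}_K/\mathfrak{p})^\times$ is cyclic of order $p-1$ (finite field) and the kernel $1+\mathfrak{p}$ filtered by $1+\mathfrak{p}^i$ is a $p$-group; since $p$ is odd, each successive quotient $(1+\mathfrak{p}^i)/(1+\mathfrak{p}^{i+1})\cong \mathcal{O}_K/\mathfrak{p}$ is elementary abelian of rank one, and the $p$-group $(1+\mathfrak{p})/(1+\mathfrak{p}^n)$ is actually cyclic (this is exactly the argument that $(\mathbb{Z}/p^n)^\times$ is cyclic for odd $p$, using that $1+p$ has order $p^{n-1}$). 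As the orders $p-1$ and $p^{n-1}$ are coprime, the product of these two cyclic groups is cyclic, so $(\mathcal{O}_K/\mathfrak{p}^n)^\times$ is cyclic, i.e.\ generated by one element. I expect this first stage to be the main obstacle only in the sense of citing/assembling it cleanly — the key place where $\mathfrak{p}$ not inert and $\mathfrak{p}\nmid 2$ both get used (inertness would force the residue field $\mathbb{F}_{p^2}$, whose unit group is still cyclic, so really the delicate hypothesis is $\mathfrak{p}\nmid 2$, which is what makes the $1+\mathfrak{p}$ part cyclic; a cleaner route might just be to invoke the general fact that $(\mathcal{O}/\mathfrak{m}^n)^\times$ is cyclic whenever $\mathcal{O}/\mathfrak{m}^n$ is a finite local ring whose maximal ideal is principal and residue characteristic is odd, citing a standard reference).

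For the second stage, suppose $x^k=1$ has a $k$-primitive solution $x_0$ in $\mathcal{O}_K/\mathfrak{p}^n$. First note any solution of $x^k=1$ is automatically a unit (its inverse is $x^{k-1}$), so all solutions lie in the cyclic group $G:=(\mathcal{O}_K/\mathfrak{p}^n)^\times$. Fix a generator $c$ of $G$ with $|G|=M$. Being $k$-primitive means $x_0$ has exact order $k$ in $G$, so $k\mid M$. In a cyclic group of order $M$, for each divisor $d\mid M$ there are exactly $d$ elements satisfying $x^d=1$ (namely the unique subgroup of order $d$, $\langle c^{M/d}\rangle$), and all of them are hit. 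Applying this with $d=k$ gives exactly $k$ solutions to $x^k=1$. (If $k\nmid M$ one instead gets $\gcd(k,M)$ solutions and no $k$-primitive one exists, consistent with the hypothesis.) This step is essentially immediate once cyclicity is in hand.

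So the write-up would be: (i) reduce to showing $G=(\mathcal{O}_K/\mathfrak{p}^n)^\times$ is cyclic; (ii) prove cyclicity via the filtration argument above, emphasizing that $\mathfrak{p}\nmid 2$ makes the pro-$p$ part cyclic and that $\mathfrak{p}$ not inert pins down the residue field (though cyclicity of the residue unit group holds regardless); (iii) conclude the counting of $k$-primitive solutions from the elementary divisor-counting in a finite cyclic group. I would keep stage (ii) brief, citing a standard reference for the structure of units in a local principal-ideal ring of odd residue characteristic rather than reproving it, since the lemma is flagged as "well-known." The only genuine subtlety to double-check is that the phrase "not inert" together with "$\mathfrak{p}\nmid 2$" is exactly enough: ramified primes above $2$ are excluded, and for odd $p$ both split and ramified primes give a cyclic unit group, so the statement is correct as phrased.
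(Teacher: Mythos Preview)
The paper does not actually prove this lemma; it is labelled ``well-known'' and stated without argument, so there is no proof to compare against. Your second stage (deducing the count of $k$-th roots of unity from cyclicity) is correct and standard. The genuine gap is in your first stage, specifically in the ramified case. You assert that ``the $p$-group $(1+\mathfrak{p})/(1+\mathfrak{p}^n)$ is actually cyclic (this is exactly the argument that $(\mathbb{Z}/p^n)^\times$ is cyclic for odd $p$)'', but having rank-one successive quotients does not force an abelian $p$-group to be cyclic, and the analogy with $\mathbb{Z}/p^n$ breaks down precisely because when $\mathfrak{p}$ is ramified one has $p\in\mathfrak{p}^2$, so the $p$-th power map sends $1+\mathfrak{p}^i$ into $1+\mathfrak{p}^{i+2}$ rather than merely $1+\mathfrak{p}^{i+1}$. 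Concretely, take $K=\mathbb{Q}(\sqrt{-5})$, $\mathfrak{p}=(\sqrt{-5})$, $n=3$ (so $\mathfrak{p}^3=(5\sqrt{-5})$): both $6$ and $1+\sqrt{-5}$ have exact order $5$ in $(\mathcal{O}_K/\mathfrak{p}^3)^\times$, yet $\langle 6\rangle=\{1,6,11,16,21\}$ and $\langle 1+\sqrt{-5}\rangle$ meet only in $\{1\}$. Hence the unit group is not cyclic, and $x^5=1$ has $25$ solutions despite the existence of a $5$-primitive one. Your closing claim that ``for odd $p$ both split and ramified primes give a cyclic unit group'' is therefore false, and so is the lemma as literally stated for ramified $\mathfrak{p}$ with $n\ge 3$.

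For split $\mathfrak{p}$ your argument is fine, since then $\mathcal{O}_K/\mathfrak{p}^n\cong\mathbb{Z}/p^n\mathbb{Z}$ and the classical result applies verbatim; this is also the case that matters downstream in Theorem~\ref{fast}. A correct write-up should either strengthen the hypothesis to ``$\mathfrak{p}$ split'', or, if ramified $\mathfrak{p}$ must be allowed, drop the cyclicity claim and instead add the hypothesis $\gcd(k,p)=1$ so that all $k$-th roots of unity already lie in the cyclic Teichm\"uller part $(\mathcal{O}_K/\mathfrak{p})^\times$.
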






\begin{lemma}\label{pdiv} 
Let $N=\psi\overline{\psi}$ split in $\mathcal{O}_K$. Let $n$ be a positive integer such that $(n)|\psi-1$, then $N\equiv 1\mod n$. 
\end{lemma}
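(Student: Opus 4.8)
The plan is to prove Lemma \ref{pdiv} by a direct norm computation. We are given that $N = \psi\overline\psi$ splits in $\mathcal{O}_K$, so $N = N_{K/\mathbb{Q}}(\psi) = \psi\overline\psi$ as rational integers, and that $(n) \mid \psi - 1$ as ideals of $\mathcal{O}_K$, i.e. $\psi \equiv 1 \pmod{n}$ in $\mathcal{O}_K$. Since $n$ is a rational integer, complex conjugation (the nontrivial element of $\mathrm{Gal}(K/\mathbb{Q})$) fixes $n$, so applying it to the congruence $\psi \equiv 1 \pmod n$ gives $\overline\psi \equiv 1 \pmod n$ as well. Multiplying the two congruences yields $\psi\overline\psi \equiv 1 \pmod n$ in $\mathcal{O}_K$, that is, $N \equiv 1 \pmod n$ in $\mathcal{O}_K$.

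The one genuine (if minor) point to address is descent from a congruence in $\mathcal{O}_K$ to a congruence in $\mathbb{Z}$: I would note that $N - 1 \in \mathbb{Z}$ and $N - 1 \in n\mathcal{O}_K$, so $N - 1 \in n\mathcal{O}_K \cap \mathbb{Z}$. Since $\mathbb{Z} \subset \mathcal{O}_K$ is a free $\mathbb{Z}$-module with $\mathbb{Z}$ a direct summand (indeed $\mathcal{O}_K = \mathbb{Z} \oplus \mathbb{Z}\omega$ for a suitable $\omega$), one has $n\mathcal{O}_K \cap \mathbb{Z} = n\mathbb{Z}$; alternatively, write $N - 1 = n\alpha$ with $\alpha \in \mathcal{O}_K$, apply conjugation to get $N - 1 = n\overline\alpha$, so $\alpha = \overline\alpha \in \mathbb{Q} \cap \mathcal{O}_K = \mathbb{Z}$. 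Hence $n \mid N - 1$ in $\mathbb{Z}$, which is the claim.

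I expect no real obstacle here; the lemma is elementary and the proof is essentially the two-line argument above (conjugate the hypothesis, multiply, descend to $\mathbb{Z}$). The only thing worth being careful about is the implicit use that $\psi - 1$ and $\overline\psi - 1$ being divisible by $n$ is a statement about the \emph{ideal} $(n)$, which behaves well under the Galois action precisely because $n \in \mathbb{Z}$ is fixed; this is exactly where the hypothesis that $N$ splits (so that $\psi, \overline\psi$ generate the two conjugate prime ideals above $N$ and conjugation is available) is used, together with the running assumption $K \neq \mathbb{Q}[\sqrt{-1}], \mathbb{Q}[\sqrt{-3}]$ which keeps the unit group trivial and avoids any stray units when passing between elements and ideals.
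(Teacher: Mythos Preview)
Your proof is correct and follows essentially the same route as the paper: conjugate the hypothesis using that $n=\overline{n}$, then multiply to obtain $N=\psi\overline{\psi}\equiv 1\pmod n$. You add an explicit descent step from $\mathcal{O}_K$ to $\mathbb{Z}$ that the paper leaves implicit; your closing remarks about the splitting hypothesis and the unit group are not actually needed for this argument (conjugation is always available in $K/\mathbb{Q}$, and no units enter), but they do no harm.
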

\begin{proof}
If $(n)|\psi-1$, then $(n=\overline{n})|(\overline{\psi-1}=\overline{\psi}-1)$. But then $N=\psi\overline{\psi}\equiv 1\cdot 1=1\mod n$, as desired. 
\end{proof}

Thus we do not lose much by taking $\mathfrak{q}$ to be non-inert. If it were inert in the following theorem, the classical finite fields test could be used. Further, it does not hurt to assume $(q^{e_q})\nmid (\psi(\mathfrak{N})^{2k}-1)$, as otherwise $q^{e_q}|N^{2k}-1$ by Lemma \ref{pdiv} and the classical Lenstra primality test may be used. 

\begin{theorem}\label{fast}
Let notation be as above and fix some $\mathfrak{N}\nmid (2)$. Assume $N>1$. Let $(\psi(\mathfrak{N})^k-1)=\Gamma\mathfrak{q}^{e_q}$ with $\mathfrak{q}\nmid (2)$ a non-inert principal prime and $q^{e_q}:=N_{K/\mathbb{Q}}(\mathfrak{q}^{e_q})>N^{1/2}$. Further assume $((\psi(\mathfrak{N}))^m-1)\not\in \mathfrak{q}^{e_q}$ for $0< m<k$, and that $(q^{e_q})\nmid ((\psi(\mathfrak{N}))^{2k}-1)$. If there is an $f_k=x^k-a$ with $a$ a primitive $k$-th non-residue modulo $\mathfrak{N}$, and if $\exists P\in E((\mathcal{O}_H/\mathfrak{N})[x]/(f_k))$ such that 
\begin{align*}
    \text{(1) } [q^{e_q}]P&=O_E,\\
    \text{(2) } [q^{e_q-1}]P &\text{ is strongly nonzero modulo $\mathfrak{N}$},
\end{align*}
then $\psi(\mathfrak{p})=\psi(\mathfrak{N})^m$ in $\mathcal{O}_K/\mathfrak{q}^{e_q}$ for some prime $\mathfrak{p}|\mathfrak{N}$ up to units. If further none of the $O(4(-d)+4d^2)$ residues $\beta$ of $\psi(\mathfrak{N})^m$, $m=1,...,k$ with $N_{K/\mathbb{Q}}(\beta)\leq N_{K/\mathbb{Q}}(\Lambda)$ have that $N_{K/\mathbb{Q}}(\beta)$ properly divides $N$, then $N$ is prime.  
\end{theorem}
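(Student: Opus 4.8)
The plan is to run the argument of Theorem~\ref{lens} almost verbatim, but since there is no hypothesis playing the role of condition~(3) there, I will recover the congruence $\psi(\mathfrak p)\equiv\psi(\mathfrak N)^m$ from the cyclic structure of $(\mathcal O_K/\mathfrak q^{e_q})^\times$ supplied by Lemma~\ref{sim}. Fix a prime $\mathfrak p\mid\mathfrak N$ and reduce the computations in conditions~(1),(2) modulo $\mathfrak p$, regarding $P$ (by the discussion before Lemma~\ref{reduce}) as a point of $E$ over the residue field $(\mathcal O_H/\mathfrak p)[x]/(g)$ for an irreducible factor $g\mid f_k$. Since $\mathfrak q$ is principal, non-inert and prime to $2$, the rational prime $q=N_{K/\mathbb Q}(\mathfrak q)$ splits and $(q^{e_q})=\mathfrak q^{e_q}\overline{\mathfrak q}^{e_q}$, $(q^{e_q-1})=\mathfrak q^{e_q-1}\overline{\mathfrak q}^{e_q-1}$, so every ideal dividing $(q^{e_q})$ has the form $\mathfrak q^{a}\overline{\mathfrak q}^{b}$ with $0\le a,b\le e_q$.

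First I would pin down $\operatorname{ord}_{\mathfrak p}(P)$. Condition~(1) gives $q^{e_q}\in\operatorname{ord}_{\mathfrak p}(P)$, hence $\operatorname{ord}_{\mathfrak p}(P)\mid(q^{e_q})$; condition~(2) (strong non-vanishing holding modulo every $\mathfrak p\mid\mathfrak N$) gives $q^{e_q-1}\notin\operatorname{ord}_{\mathfrak p}(P)$, hence $\operatorname{ord}_{\mathfrak p}(P)\nmid(q^{e_q-1})$. Writing $\operatorname{ord}_{\mathfrak p}(P)=\mathfrak q^{a}\overline{\mathfrak q}^{b}$, this forces $a=e_q$ or $b=e_q$. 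On the other hand Lemma~\ref{reduce} gives $\operatorname{ord}_{\mathfrak p}(P)\mid\psi(\mathfrak p)^k-1$, so either $\mathfrak q^{e_q}\mid\psi(\mathfrak p)^k-1$ or $\overline{\mathfrak q}^{e_q}\mid\psi(\mathfrak p)^k-1$. Applying complex conjugation in the second case, and using $\psi(\overline{\mathfrak p})=\pm\overline{\psi(\mathfrak p)}$ together with $N_{H/\mathbb Q}(\overline{\mathfrak p})=N_{H/\mathbb Q}(\mathfrak p)$, I obtain a prime $\mathfrak p'\in\{\mathfrak p,\overline{\mathfrak p}\}$ with $\psi(\mathfrak p')^k\equiv 1\pmod{\mathfrak q^{e_q}}$ and $N_{K/\mathbb Q}(\psi(\mathfrak p'))=N_{H/\mathbb Q}(\mathfrak p)$, which is a prime-power divisor of $N$.

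Next I would identify $\psi(\mathfrak p')$ among the powers of $\psi(\mathfrak N)$. The hypotheses $(\psi(\mathfrak N)^k-1)\subseteq\mathfrak q^{e_q}$ and $(\psi(\mathfrak N)^m-1)\not\subseteq\mathfrak q^{e_q}$ for $0<m<k$ say precisely that $\psi(\mathfrak N)$ is a $k$-primitive solution of $x^k=1$ in $(\mathcal O_K/\mathfrak q^{e_q})^\times$; by Lemma~\ref{sim} that group is cyclic, so the $k$-th roots of unity in it are exactly $\{\psi(\mathfrak N)^m:0\le m<k\}$. Since $\psi(\mathfrak p')^k\equiv 1$, $\psi(\mathfrak p')$ is a unit mod $\mathfrak q^{e_q}$, so $\psi(\mathfrak p')\equiv\psi(\mathfrak N)^m\pmod{\mathfrak q^{e_q}}$ for some $m$, up to the $\pm1$ ambiguity in the Hecke value (the conjugate prime $\overline{\mathfrak p}$ and this sign account for the ``up to units / for some $\mathfrak p\mid\mathfrak N$'' in the statement); this is the first assertion. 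For the primality conclusion I would argue as in Theorem~\ref{lens}: if none of the $O(4(-d)+4d^2)$ residues $\beta$ of $\pm\psi(\mathfrak N)^m$, $m=1,\dots,k$, modulo $\Lambda=\mathfrak q^{e_q}$ with $N_{K/\mathbb Q}(\beta)\le N_{K/\mathbb Q}(\Lambda)$ have $N_{K/\mathbb Q}(\beta)$ a proper divisor of $N$, then for \emph{every} $\mathfrak p\mid\mathfrak N$ the number $N_{H/\mathbb Q}(\mathfrak p)=N_{K/\mathbb Q}(\psi(\mathfrak p'))$ is either one of those $\beta$-norms and hence, dividing $N$, equal to $N$, or else exceeds $N_{K/\mathbb Q}(\Lambda)>N^{1/2}$. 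Either way multiplicativity of $N_{H/\mathbb Q}$ forces $\mathfrak N$ to be a single prime of norm $N$, so $N=N_{H/\mathbb Q}(\mathfrak p)=p^j$; since perfect powers are screened out (cf.\ Remark~\ref{rem}) this gives $j=1$ and $N=p$ prime. The residue count is Theorem~\ref{resi}, exactly as in Theorem~\ref{lens}.

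The main obstacle is the second step: extracting $\operatorname{ord}_{\mathfrak p}(P)$ from conditions~(1)--(2), which are phrased with the \emph{rational} integer $q^{e_q}$ rather than a generator of $\mathfrak q^{e_q}$. One must use that $q$ splits to write $(q^{e_q})=\mathfrak q^{e_q}\overline{\mathfrak q}^{e_q}$ and then track which of $\mathfrak q,\overline{\mathfrak q}$ carries the full exponent, and reconcile this with the fact that $\overline{\psi(\mathfrak N)}$ need not itself be a $k$-th root of unity modulo $\mathfrak q^{e_q}$ — so the statement really controls $\psi(\mathfrak p)$ only up to conjugation, though only the norm $N_{H/\mathbb Q}(\mathfrak p)$ enters the final deduction. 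The remaining ingredients — cyclicity (Lemma~\ref{sim}), the divisibility $\operatorname{ord}_{\mathfrak p}(P)\mid\psi(\mathfrak p)^k-1$ (Lemma~\ref{reduce}), and the norm/residue bookkeeping — are routine given the earlier results.
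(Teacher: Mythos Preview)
Your argument is correct for the primality conclusion but takes a genuinely different route from the paper at the key step. After obtaining (as the paper does) that for each $\mathfrak p\mid\mathfrak N$ either $\mathfrak q^{e_q}\mid\psi(\mathfrak p)^k-1$ or $\overline{\mathfrak q}^{e_q}\mid\psi(\mathfrak p)^k-1$, the paper assumes \emph{for contradiction} that the $\overline{\mathfrak q}$ alternative holds for \emph{every} $\mathfrak p$, multiplies over the prime powers $\mathfrak p^{v_p}\|\mathfrak N$ to obtain $\overline{\mathfrak q}^{e_q}\mid u\pi^k-1$ for some unit $u$ (with $\pi$ a generator of $N_{H/K}(\mathfrak N)$), and then invokes the hypothesis $(q^{e_q})\nmid\psi(\mathfrak N)^{2k}-1$ to reach the contradiction; this is the only place that hypothesis is used, and it yields the intermediate clause for a single $\mathfrak p\mid\mathfrak N$. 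You instead treat each $\mathfrak p$ separately by complex conjugation, sending the $\overline{\mathfrak q}$ alternative to $\mathfrak q^{e_q}\mid\overline{\psi(\mathfrak p)}^k-1$, and then run the cyclicity argument (Lemma~\ref{sim}) on $\overline{\psi(\mathfrak p)}$. This bypasses the $(q^{e_q})\nmid\psi(\mathfrak N)^{2k}-1$ hypothesis altogether and, as a bonus, constrains $N_{H/\mathbb Q}(\mathfrak p)$ for \emph{every} $\mathfrak p\mid\mathfrak N$, which is exactly what the final Theorem~\ref{lens}-style norm argument needs. The trade-off is that you do not literally prove the intermediate clause ``$\psi(\mathfrak p)\equiv\psi(\mathfrak N)^m$ for some $\mathfrak p\mid\mathfrak N$ up to units'': your conjugation yields $\overline{\psi(\mathfrak p)}$, and $\overline{\mathfrak p}$ need not divide $\mathfrak N$. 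Passing to $\psi(\overline{\mathfrak p})=\pm\overline{\psi(\mathfrak p)}$ is a detour that introduces an unnecessary sign problem when $k$ is odd; it is cleaner to work with $\overline{\psi(\mathfrak p)}$ directly, since only its norm $N_{K/\mathbb Q}(\overline{\psi(\mathfrak p)})=N_{H/\mathbb Q}(\mathfrak p)$ enters the trial-division step. One small slip: you wrote ``$q$ splits'' where the hypothesis is only ``$\mathfrak q$ non-inert''; in the ramified case $\mathfrak q=\overline{\mathfrak q}$ the dichotomy collapses and the conclusion is immediate.
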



\begin{proof}
Condition $(1)$ gives $\ord_\mathfrak{p}(P)\supset ({q}^{e_q})$ by definition, for every prime $\mathfrak{p}|\mathfrak{N}$. Condition $(2)$ yields that there is an element $\lambda$ of $({q}^{e_q-1})$ such that $[\lambda]P\not\equiv O_E\mod \mathfrak{p}$ for each prime $\mathfrak{p}|\mathfrak{N}$. This implies that $\ord_\mathfrak{p}(P)\not\supset ({q}^{e_q-1})$ and thus that $\mathfrak{q}^{e_q}|\ord_\mathfrak{p}(P)$ or $\overline{\mathfrak{q}}^{e_q}|\ord_\mathfrak{p}(P)$ for each $\mathfrak{p}$. Assume for contradiction that $\overline{\mathfrak{q}}^{e_q}| \ord_\mathfrak{p}(P)|(\psi(\mathfrak{p})^k-1)$ for each $\mathfrak{p}$ (Lemma \ref{ord}). By Theorem 5.3 of \cite{count}, $\pi_\mathfrak{p}^k=u_p\psi(\mathfrak{p})^k$ for some unit $u_p\in \mathcal{O}_K^\times$. This implies that $\overline{\mathfrak{q}}^{e_q}|(u_p\pi_\mathfrak{p}^k-1)$ and thus that 
$\overline{\mathfrak{q}}^{e_q}|(u_p^{v_p}\pi_\mathfrak{p}^{v_pk}-1)$ where $v_p$ is the highest power of $\mathfrak{p}$ dividing $\mathfrak{N}$ by norm multiplicativity. Again by multiplicativity, we see that 
\begin{align*}
    \overline{\mathfrak{q}}^{e_q}|(u_p^{v_p}\pi_\mathfrak{p}^{v_pk}-1)&\Rightarrow \overline{\mathfrak{q}}^{e_q}|(u\pi^k-1)
\end{align*}
for some unit $u$. By construction, $\psi(\mathfrak{N})=u'\pi$ for some unit $u'$, and thus 
\begin{align*}
    \psi(\mathfrak{p})^k-1&=u'^k\pi^k-1
\end{align*}
Note $u'=\pm 1$. If $u'=1$, then since $\overline{\mathfrak{q}}^{e_q}|\psi(\mathfrak{N})^k-1$, we have $q^{e_q}|\psi(\mathfrak{N})^k-1$, a contradiction by assumption. But if $u''=-1$ then $\overline{\mathfrak{q}}^{e_q}|\psi(\mathfrak{N})^k+1$. This implies that $q^{e_q}|(\psi(\mathfrak{N})^k+1)(\psi(\mathfrak{N})^k-1)=\psi(\mathfrak{N})^{2k}-1$ by construction, a contradiction to the assumptions in the theorem. Thus there is some $\mathfrak{p}|\mathfrak{N}$ such that $\mathfrak{q}^{e_q}|\ord_{\mathfrak{p}}(P)$. Fix this $\mathfrak{p}$.

We have by definition of $f_k$ and Lemma \ref{reduce} that $[\psi(\mathfrak{p})^k]P=P \mod \mathfrak{p}$ and thus that $\psi(\mathfrak{p})^k=1$ in $\mathcal{O}_K/\mathfrak{q}^{e_q}$ by definition of order as the annihilator (Lemma \ref{ord}). Since we have by choice that $\psi(\mathfrak{N})^k=1$ and $\psi(\mathfrak{N})^m\neq 1$ in $\mathcal{O}_K/\mathfrak{q}^{e_q}$ for $0<m<k$, and since by Lemma \ref{sim} there are precisely $k$ elements $e$ in $\mathcal{O}_K/\mathfrak{q}^{e_q}$ with $e^k=1$, we have that said $k$ elements are precisely $\psi(\mathfrak{N})^m$, $0\leq m<k$. In particular, $\psi(\mathfrak{p})=\psi(\mathfrak{N})^m$ in $\mathcal{O}_K/\mathfrak{q}^{e_q}$ for some $0\leq m<k$.

The result then follows exactly as in Theorem \ref{lens}. 

\end{proof}







To specify a quasi-quadratic Las Vegas algorithm from this theorem, we do the following discussion. Assume $\mathfrak{q}^x|\psi(\mathfrak{p})^k-1$. By \cite{wash}, we have that, as groups,

$$E((O_H/\mathfrak{p})[x]/(f_k))\simeq_\phi \mathbb{Z}/n\mathbb{Z}\times \mathbb{Z}/m\mathbb{Z}$$ 

where $n|m$ and the groups are additive. Denote $\#E_{p,k}=\#E((O_H/\mathfrak{p}))[x]/(f_k)$. Assume $q^x|\#E_{p,k}$. By primality we have that $q^x|nm$ implies $q^{x-y}|n$, $q^y|m$. Since $n|m$, we must have that $y\geq x-y$, so we can take $y\geq x/2$. Thus we have 

$$\left[\dfrac{\#E_{p,k}}{q^x}\right]P=O_E\Leftrightarrow (P\mapsto_\phi (a,b): \text{ $n|a\cdot \frac{\#E_{p,k}}{q^x}$ and $m|b\cdot \frac{\#E_{p,k}}{q^x}$})$$

Note that since $n,m|\#E_{p,k}$ ($nm=\#E_{p,k}$),

$$(P\mapsto_\phi (a,b): \text{ $n|a\cdot \frac{\#E_{p,k}}{q^x}$ and $m|b\cdot \frac{\#E_{p,k}}{q^x}$})\Leftrightarrow (P\mapsto_\phi (a,b): \text{ $q^{x-y}|a$ and $q^y|b$})$$

Wlog, $0\leq a\leq n, 0\leq b\leq m$. There are $\lfloor{m/q^y}\rfloor+1$ such $b$ that are multiples of $q^y$. This implies that randomly choosing $(a,b)\in \mathbb{Z}/n\mathbb{Z}\times \mathbb{Z}/m\mathbb{Z}$, the probability that $q^y|b$, $q^{x-y}|a$ (which is less than or equal to the probability that $q^y|b$) is at most 
\begin{align*}
    \dfrac{n(\lfloor{m/q^y}\rfloor+1)}{nm}&\leq \dfrac{n{m/q^y}+n}{nm}\\
    &=\dfrac{1}{q^y}+\dfrac{1}{m}\\
    &\leq \dfrac{2}{q^{x/2}}.
\end{align*}

We can now show the following theorem. 

\begin{theorem}\label{prob}
Let $\mathfrak{N}$ be prime and satisfy the assumptions of Theorem \ref{fast} and let $f_k$, $E/H$ also be as in Theorem \ref{fast}, and further suppose $(\psi(\mathfrak{N})^k-1)=\Gamma\mathfrak{q}^{e'}$ with $\mathfrak{q}$ principal and $q^{e'}>N^{1/2+\alpha}$. Then for a randomly chosen $Q$,  conditions $(1),(2)$ are satisfied for 

$$P=\left[\dfrac{\#E_{p,k}}{q^{e'}}\right]Q=\left[\dfrac{(\psi(\mathfrak{N})^k-1)(\overline{\psi(\mathfrak{N})}^k-1)}{q^{e'}}\right]Q$$

 with probability at least $1-\frac{1}{N^{\alpha/2}}$ for some  $e_q$ such that $N^{1/2}<q^{e_q}\leq q^{e'}$.

\end{theorem}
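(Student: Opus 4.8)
The plan is to run the probabilistic computation preceding the theorem in reverse: we already know that for a fixed prime power $q^y \mid \#E_{p,k}$ with $y \geq x/2$, a random point fails to have the right order at $\mathfrak q$ with probability at most $2/q^{x/2}$. So first I would set $x = v_{\mathfrak q}(\#E_{p,k})$; since $\mathfrak N$ is prime, $\#E_{p,k} = (\psi(\mathfrak N)^k - 1)(\overline{\psi(\mathfrak N)}^k-1)$ in the sense of norms, and by hypothesis $\mathfrak q^{e'} \mid (\psi(\mathfrak N)^k-1)$ with $q^{e'} > N^{1/2+\alpha}$, so $x \geq e'$ and in fact $q^x \geq q^{e'} > N^{1/2+\alpha}$. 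Then $P = [\#E_{p,k}/q^{e'}]Q$ for random $Q$ is, under the isomorphism $\phi$, the image of a uniformly random element of the subgroup killed by $q^{e'}$ (or rather, $[\#E_{p,k}/q^{e'}]Q$ ranges over the $q^{e'}$-torsion as $Q$ ranges over $E_{p,k}$, with each $q^{e'}$-torsion point hit equally often since $\phi$ is a group iso and multiplication by $\#E_{p,k}/q^{e'}$ is surjective onto the $q^{e'}$-torsion with fibers of constant size).

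The key step is then to identify exactly which $e_q$ the resulting $P$ "works" for in the sense of Theorem \ref{fast}, and to lower-bound the probability. Write $P$'s image under $\phi$ as $(a,b) \in \mathbb Z/n\mathbb Z \times \mathbb Z/m\mathbb Z$; let $e_q$ be the largest integer with $q^{e_q} \mid \ord_{\mathfrak p}(P)$ restricted to the $\mathfrak q$-part — concretely, after choosing $P$ we compute $[q^j]P$ for increasing $j$ until we find the first $j$ with $[q^{e'}/q^j \cdot \text{(cofactor)}]P = O_E$, i.e. we determine the exact $\mathfrak q$-order of $P$ and set $e_q$ accordingly. Conditions $(1)$ and $(2)$ of Theorem \ref{fast} are then automatically satisfied for that $e_q$ by construction: $(1)$ holds because $[q^{e_q}]$ (times the prime-to-$q$ cofactor, which we've already divided out) kills $P$, and $(2)$ holds because $e_q$ was chosen to be the exact order, so $[q^{e_q-1}]P \not\equiv O_E \bmod \mathfrak N$, hence strongly nonzero since $\mathfrak N$ is prime. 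The only thing that can go wrong is $q^{e_q} \leq N^{1/2}$, i.e. $P$ lands too deep in the $q$-divisible part. By the displayed bound preceding the theorem, the probability that $P$'s $\mathfrak q$-order drops below $q^{e'}/q^{\lceil(e'- \lceil \sqrt N\text{-cutoff}\rceil)/2\rceil}$ — more precisely, the probability that $q^{x/2}$ fails to divide the relevant coordinate — is at most $2/q^{x/2} \leq 2/q^{e'/2} < 2/N^{(1/2+\alpha)/2}$, and one checks $2 N^{-1/4 - \alpha/2} \leq N^{-\alpha/2}$ for $N$ past a small bound (and sharpens constants if needed), giving the claimed $1 - 1/N^{\alpha/2}$.

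I would organize the write-up as: (i) reduce to the $\mathfrak q$-primary part, invoking Lemma \ref{sim} and the group isomorphism from \cite{wash} to view $E_{p,k}[q^\infty] \cong \mathbb Z/q^{x-y}\mathbb Z \times \mathbb Z/q^y\mathbb Z$ with $y \geq x/2$; (ii) observe that $[\#E_{p,k}/q^{e'}]Q$ for random $Q$ gives a uniform element of $E_{p,k}[q^{e'}]$; (iii) define $e_q$ as the exact $\mathfrak q$-exponent of $\ord_{\mathfrak N}(P)$ and verify conditions $(1),(2)$ hold for it by the definition of order (Lemma \ref{ord}); (iv) bound the bad event $\{q^{e_q} \leq N^{1/2}\}$ by the bad event $\{q^{x/2} \nmid b\}$ which has probability $\leq 2/q^{x/2} \leq 2/N^{1/4+\alpha/2} \leq N^{-\alpha/2}$.

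The main obstacle I anticipate is bookkeeping in step (iv): carefully relating the condition "$q^{e_q} > N^{1/2}$ for the $e_q$ we actually extract" to the clean coordinate condition "$q^y \mid b$" from the pre-theorem computation. The subtlety is that $e'$ may exceed $x/2$, so the relevant threshold for the random coordinate is not simply $q^{e'}$ but $q^{\lceil x/2 \rceil}$, and one must confirm that $q^{\lceil x/2\rceil} > N^{1/2}$ (which follows from $q^x \geq q^{e'} > N^{1/2+\alpha} \geq N^{1/2} \cdot N^{\alpha}$, so $q^{x/2} > N^{1/4 + \alpha/2} > N^{1/4}$ — wait, this needs $q^{x/2} > N^{1/2}$, which requires $q^x > N$; since $q^x \geq q^{e'} > N^{1/2+\alpha}$ this is not immediate unless $\alpha \geq 1/2$). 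I would handle this by instead using that $\mathfrak q^{e'} \mid (\psi(\mathfrak N)^k-1)$ AND $\overline{\mathfrak q}{}^{e'} \mid (\overline{\psi(\mathfrak N)}^k - 1)$ (Lemma \ref{pdiv}-type conjugation), so $q^{2e'} \mid \#E_{p,k}$, giving $x \geq 2e'$ and hence $q^{x/2} \geq q^{e'} > N^{1/2+\alpha}$, which then closes the estimate cleanly and yields the exponent $e_q$ with $N^{1/2} < q^{e_q} \leq q^{e'}$ as stated.
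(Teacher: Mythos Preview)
Your steps (i)--(iii) match the paper's argument: once $\mathfrak{N}$ is prime, take $e_q$ to be the exact exponent with $[q^{e_q}]P=O_E$ and $[q^{e_q-1}]P\neq O_E$; conditions $(1),(2)$ of Theorem~\ref{fast} then hold automatically (nonzero is strongly nonzero for prime $\mathfrak{N}$), and the only failure mode is $q^{e_q}\leq N^{1/2}$. The problem is entirely in step (iv), and your proposed fix there has a concrete error: from $\mathfrak{q}^{e'}\mid(\psi(\mathfrak{N})^k-1)$ and $\overline{\mathfrak{q}}^{e'}\mid(\overline{\psi(\mathfrak{N})}^k-1)$ you deduce $q^{2e'}\mid\#E_{p,k}$, but since $q\mathcal{O}_K=\mathfrak{q}\overline{\mathfrak{q}}$ in the split case, $\mathfrak{q}^{e'}\overline{\mathfrak{q}}^{e'}=(q)^{e'}$, so this only yields $q^{e'}\mid\#E_{p,k}$. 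In general $v_q(\#E_{p,k})=e'+v_{\overline{\mathfrak{q}}}(\psi(\mathfrak{N})^k-1)$, and the second summand may well be zero, so the inequality $q^{x/2}>N^{1/2}$ you need does not follow.

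The deeper issue is that you are feeding the wrong exponent into the pre-theorem estimate. That estimate bounds $\Pr\bigl[[\#E_{p,k}/q^z]Q=O_E\bigr]\leq 2/q^{z/2}$ for any $z$ with $q^z\mid\#E_{p,k}$; the $z$ there is the amount by which one divides, not the full $v_q(\#E_{p,k})$. The paper simply lets $x$ be the largest integer with $q^x\leq N^{1/2}$, so that the bad event $\{q^{e_q}\leq N^{1/2}\}$ is exactly $\{[q^x]P=O_E\}=\{[\#E_{p,k}/q^{e'-x}]Q=O_E\}$, and applies the estimate with $z=e'-x$. Since $q^{e'}>N^{1/2+\alpha}$ and $q^x\leq N^{1/2}$, one has $q^{e'-x}>N^{\alpha}$, and the claimed bound follows immediately. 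No analysis of $v_q(\#E_{p,k})$, and no conjugate-factor argument, is needed.
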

\begin{proof}
By the above discussion, if $\mathfrak{N}$ is prime then 
\begin{align*}
    \left[q^{x}\right]P&\equiv O_E\mod \mathfrak{N}\\
    \Leftrightarrow \left[\dfrac{\#E_{p,k}}{q^{e'-x}}\right]Q&\equiv O_E\mod \mathfrak{N}
\end{align*}
occurs with probability at most $1/q^{e'-x}$. If we let $x=\lfloor{N^{1/2}/q}\rfloor$, then $q^x$ is the largest power of $q$ less than or equal to $N^{1/2}$ and $q^{e'-x}>N^{\alpha}$. Thus $[q^x]P\equiv O_E\mod \mathfrak{N}$ with probability at most $1/N^{\alpha/2}$ by the above discussion. If it is not the identity, then by the Lemma \ref{ord} and that $(q^{e'})\subset \ord_\mathfrak{N}(P)$ and $(q^{x})\not\subset \ord_\mathfrak{N}(P)$, $q^{e_q}=\ord_\mathfrak{N}(P)$ with $q^{e_q}>N^{1/2}$ by definiton of $x$, satsifying condition $(1)$ for $e_q$. Condition $(2)$ is satisfied for $e_q-1$ since by choice of $e_q$, $[q^{e_q-1}]P\not\equiv O_E\mod \mathfrak{N}$, and since $\mathfrak{N}$ is prime, this yields strongly-nonzero modulo $\mathfrak{N}$. 
\end{proof}

If the test fails to certify primality or prove compositness in one run through with a non-negligible $\alpha$, then the number is very likely composite, so utilize the Miller-Rabin compositness test, which has an average runtime of $\widetilde{O}(\log^2 N)$ \cite{mill,mill2,mill3}. As will be shown in the Implementation section, this provides a quasi-quadratic Las Vegas algorithm for primality of a new class of integers with an average runtime of $\widetilde{O}(\log^2 N)$, and furthermore that certifies primes in $\widetilde{O}(\log^2 N)$ with probability $1-1/N^{\alpha/2}$, which is nearly $1$ for large $N$, non-negligible $\alpha$. We first introduce machinery for the trial division step in the next section.

\subsection{Residue Classes in Non-Euclidean Quadratic Rings} 

Let $K=\mathbb{Q}[\sqrt{d}]$ be a quadratic number field, with $d$ squarefree, equipped with the standard norm $|\bullet|$. Let $\mathfrak{I}\subset \mathcal{O}_K$ be an ideal. For $\alpha\in \mathcal{O}_K$, let $\overline{\alpha}$ denote the image of $\alpha$ under the quotient map $\phi_\mathfrak{I}: \mathcal{O}_K\rightarrow \mathcal{O}_K/\mathfrak{I}$. In this section we seek for $\overline{\beta}\in \mathcal{O}_K/\mathfrak{I}$ to find all lifts $\phi_\mathfrak{I}^{-1}(\overline{\beta})$ with norms below a certain bound. 

For the purposes of this paper, we will assume that $\mathfrak{I}=(\iota)$ is principal. We will also restrict to the case of $d<0$ as we will be working with imaginary quadratic number fields. It is well known that for $d<0$, there are only finitely many norm-Euclidean $\mathcal{O}_K$: 

\begin{theorem}
The norm-Euclidean quadratic number fields with $d<0$ are precisely given by 
\begin{align*}
d&=-1,-2,-3,-7,-11. 
\end{align*}
\end{theorem}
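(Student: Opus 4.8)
The plan is to translate norm-Euclideanness into a covering-radius statement for the lattice $\mathcal{O}_K\subset\mathbb{C}$ and then compute that covering radius by hand. Since $d<0$, the field norm $|\cdot|$ on $K=\mathbb{Q}[\sqrt d]$ satisfies $|z|=|z|_{\mathbb{C}}^{2}$, the square of the complex absolute value. Hence $\mathcal{O}_K$ is norm-Euclidean if and only if for every $\xi\in K$ there is $\gamma\in\mathcal{O}_K$ with $|\xi-\gamma|_{\mathbb{C}}<1$; because $K$ is dense in $\mathbb{C}$ and $\xi\mapsto\min_{\gamma\in\mathcal{O}_K}|\xi-\gamma|_{\mathbb{C}}$ is continuous, this is equivalent to the covering radius $\rho:=\sup_{\xi\in\mathbb{C}}\min_{\gamma\in\mathcal{O}_K}|\xi-\gamma|_{\mathbb{C}}$ satisfying $\rho<1$. (The borderline $\rho=1$ never occurs for integral $d$ below, so I would not dwell on it.) The first step is to record this equivalence.

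Next I would split into the two shapes of $\mathcal{O}_K$, writing $m=|d|$. If $d\equiv 2,3\pmod 4$ then $\mathcal{O}_K=\mathbb{Z}+\mathbb{Z}\sqrt d$ is the rectangular lattice with generators $1$ and $i\sqrt m$; its deep hole is the centre $\tfrac{1+\sqrt d}{2}$ of a fundamental rectangle, giving $\rho^2=\tfrac14+\tfrac m4=\tfrac{m+1}{4}$. If $d\equiv 1\pmod 4$ then $\mathcal{O}_K=\mathbb{Z}+\mathbb{Z}\omega$ with $\omega=\tfrac{1+\sqrt d}{2}$, a skew lattice; here I would show the deep hole is the circumcentre $Q$ of the triangle with vertices $0,\omega,\omega-1$, and a short computation yields $Q=\tfrac{(m+1)\sqrt d}{4m}$ and $\rho^2=|Q|_{\mathbb{C}}^2=\tfrac{(m+1)^2}{16m}$. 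In both cases one must verify that the exhibited point really is the farthest point from the lattice — no lattice point lies strictly closer. The rectangular case is immediate (all four nearest corners are equidistant); in the skew case this is the one mildly delicate point, dispatched by estimating the distance from $Q$ to the finitely many lattice points in the neighbouring rows of height $b\tfrac{\sqrt m}{2}$ with $|b|\le 1$ and noting all remaining lattice points are trivially farther.

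Finally I would solve $\rho^2<1$. In the rectangular case $\tfrac{m+1}{4}<1\iff m\le 2$, which together with $d\equiv 2,3\pmod 4$ forces $d=-1,-2$. In the skew case $\tfrac{(m+1)^2}{16m}<1\iff m^2-14m+1<0\iff m\le 13$ (the roots being $7\pm 4\sqrt3$), which together with $d\equiv 1\pmod 4$, i.e. $m\equiv 3\pmod 4$, forces $d=-3,-7,-11$. For every other squarefree $d<0$ the corresponding $\rho^2$ is $\ge 1$; since the deep hole we exhibited lies in $K$, this produces an explicit $\xi\in K$ with $|\xi-\gamma|\ge 1$ for all $\gamma\in\mathcal{O}_K$, so $\mathcal{O}_K$ is not norm-Euclidean. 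This exhausts all cases. The main obstacle is the single verification in Step 2 that the circumcentre is the genuine deep hole of the skew lattice; everything else is elementary. A shorter but less self-contained route would be to use norm-Euclidean $\Rightarrow$ PID $\Rightarrow h_K=1$, invoke the Baker–Heegner–Stark classification $d\in\{-1,-2,-3,-7,-11,-19,-43,-67,-163\}$, and then merely check $\tfrac{(m+1)^2}{16m}>1$ for $m=19,43,67,163$; I would nonetheless prefer the geometry-of-numbers argument above.
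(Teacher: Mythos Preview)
The paper does not prove this theorem at all: it merely introduces it with ``It is well known that\ldots'' and states the result without argument or citation. So there is nothing in the paper to compare against; your proposal supplies a proof where the paper offers none.

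Your argument is the standard geometry-of-numbers one and is correct. The reduction from norm-Euclideanness to the covering-radius inequality $\rho<1$ is clean, and your case split and formulae check out: $\rho^{2}=(m+1)/4$ in the rectangular case and $\rho^{2}=(m+1)^{2}/(16m)$ in the skew case, with the circumcentre $Q=(m+1)\sqrt d/(4m)$ of the triangle $\{0,\omega,\omega-1\}$ computed correctly. The inequality $(m+1)^2<16m$ indeed gives $m\le 13$, hence $m\in\{3,7,11\}$ under $m\equiv 3\pmod 4$. The one substantive obligation you flag---that $Q$ is the genuine deep hole---is real but routine: for the forward (Euclidean) direction you need it in full, while for the converse it suffices that no lattice point lies within complex distance~$1$ of $Q$, and since all lattice rows with $|b|\ge 2$ sit at height at least $\sqrt m$ from $Q$, only the rows $b=0,1$ require inspection. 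Your observation that the deep hole itself lies in $K$ (both $(1+\sqrt d)/2$ and $(m+1)\sqrt d/(4m)$ are visibly elements of $K$) neatly sidesteps any fuss about the borderline $\rho=1$.

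The alternative route through $h_K=1$ and Baker--Heegner--Stark that you mention would work but is disproportionate; the direct lattice computation you give is both elementary and complete.
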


For these quadratic number fields alone we can in general guarantee and determine a lift $\phi_\mathfrak{I}^{-1}(\overline{\beta})$ such that $|\phi_\mathfrak{I}^{-1}(\overline{\beta})|<|\iota|$. For other $d$, such a lift may not exist. We consider in this section the following theorem. 

\begin{theorem}\label{resi}
Let $K=\mathbb{Q}[\sqrt{d}]$ for $d<0$ squarefree. Again let $\overline{\beta}\in \mathcal{O}_K/\mathfrak{I}$ with $\mathfrak{I}=(\iota)$ principal. Then there exist $O(4(-d)+4d^2)$ lifts $\phi_\mathfrak{I}^{-1} (\overline{\beta})$ such that $|\phi_\mathfrak{I}^{-1} (\overline{\beta})|\leq |\iota|$. Furthermore they may be found or ruled out in deterministic $O(4(-d)+4d^2)$ steps. 
\end{theorem}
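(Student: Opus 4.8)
The plan is to work with an explicit $\mathbb{Z}$-basis of $\mathcal{O}_K$ and reduce the problem of finding short lifts of $\overline{\beta}$ to counting lattice points of bounded norm in a coset of the ideal lattice $\mathfrak{I} = (\iota)$. First I would fix the standard integral basis: write $\mathcal{O}_K = \mathbb{Z} + \mathbb{Z}\omega$ where $\omega = \sqrt{d}$ if $d \equiv 2,3 \pmod 4$ and $\omega = \tfrac{1+\sqrt d}{2}$ if $d \equiv 1 \pmod 4$. Then $\mathfrak{I} = (\iota)$, as a sublattice of $\mathcal{O}_K \cong \mathbb{Z}^2$, has index $N_{K/\mathbb{Q}}(\iota) = |\iota|^2$, and is spanned over $\mathbb{Z}$ by $\iota$ and $\iota\omega$. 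Pick any one lift $\beta_0 \in \mathcal{O}_K$ of $\overline\beta$ (e.g. reduce the given representative modulo $\mathfrak{I}$ in the obvious coordinate sense). Every lift is then of the form $\beta_0 + \lambda$ with $\lambda \in \mathfrak{I}$, and we want to enumerate those with $|\beta_0 + \lambda| \le |\iota|$.

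The key geometric step is the following: the set of $\lambda \in \mathfrak{I}$ with $|\beta_0 + \lambda| \le |\iota|$ is exactly the set of lattice points of $\mathfrak{I}$ inside the closed disk of radius $|\iota|$ centered at $-\beta_0$, and since a translate of this disk of radius $|\iota|$ can also be taken centered at $0$ after noting $|\beta_0|$ can be reduced to be $O(|\iota|)$ (by replacing $\beta_0$ by its "nearest" representative, losing at most a bounded multiplicative factor that depends only on the basis, hence on $d$), we are counting points of $\mathfrak{I}$ in a disk of radius $O(|\iota|)$. Now $\mathfrak{I}$ has covolume $|\iota|^2 \cdot \mathrm{covol}(\mathcal{O}_K)$ and $\mathrm{covol}(\mathcal{O}_K) = \tfrac12\sqrt{|D|}$ with $D$ the discriminant, $|D| \le 4|d|$; moreover the successive minima of $\mathfrak{I}$ are $|\iota|$ times those of $\mathcal{O}_K$, which are bounded below by a constant and above by $O(\sqrt{|d|})$. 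A standard lattice-point count (Gauss-circle-type bound, or: cover the disk by fundamental parallelotopes and use that the shortest vector of $\mathfrak{I}$ has length $\gg |\iota|$) gives that the number of such points is $O\!\left(\dfrac{|\iota|^2}{\mathrm{covol}(\mathfrak{I})} + \dfrac{|\iota|}{\mu_1(\mathfrak{I})} + 1\right) = O(1 + \sqrt{|d|} + |d|) = O(|d| + |d|^2)$ after absorbing constants — matching the claimed $O(4(-d) + 4d^2)$ up to the harmless constant $4$. To make this deterministic: enumerate the candidates directly. Writing $\lambda = s\iota + t\iota\omega$ with $s,t \in \mathbb{Z}$, the inequality $|\beta_0 + s\iota + t\iota\omega|^2 \le |\iota|^2$ is, after dividing by $|\iota|^2$, a positive-definite quadratic inequality in $(s,t)$ whose ellipse has bounded eccentricity and area $O(1)$ relative to the $(s,t)$-grid once scaled; each of $s,t$ then ranges over an interval of length $O(\sqrt{|d|})$, so there are $O(|d|)$ choices of $s$ and for each an interval of $O(\sqrt{|d|})$ choices of $t$, and testing each is $O(1)$ — giving the deterministic $O(|d| + d^2)$ bound. (One should double-check which of the two coordinates absorbs the eccentricity; in the worst case both ranges are $O(\sqrt{|d|})$, yielding $O(|d|)$ pairs, which is even better, but the crude product bound already suffices.)

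The main obstacle I anticipate is getting the constants and the shape of the counting bound to genuinely land on $O(|d| + d^2)$ rather than something like $O(|d|)$ or $O(|d|^{3/2})$ — i.e. being careful about the worst-case eccentricity of the ideal lattice (when $\iota$ itself is a very "skew" generator, e.g. $(\iota)$ has a short vector but the lattice is long in the orthogonal direction) and about the reduction step that replaces an arbitrary representative $\beta_0$ by a short one. The cleanest route around this is probably \emph{not} to optimize: just bound each of the two integer coordinates $s,t$ by the largest successive minimum ratio, $O(\sqrt{|D|}) = O(\sqrt{|d|})$, and take the product $O(|d|)$ for the lattice-point count, then note that the overhead of also ruling out candidates that fail, plus the cost of the initial reduction of $\beta_0$ (a constant number of subtractions of basis vectors, each $O(1)$ once $|d|$ is fixed, or $O(|d|)$ in general), stays within $O(|d| + d^2)$. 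So the substantive content is really just (i) the basis-and-lattice setup, (ii) the elementary convexity fact that a disk of radius $R$ meets a lattice in $O(R^2/\mathrm{covol} + R/\mu_1 + 1)$ points, and (iii) bookkeeping on $\mu_1, \mathrm{covol}$ in terms of $d$; none of these is deep, and the theorem follows by assembling them.
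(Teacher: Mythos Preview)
Your proposal is correct and follows essentially the same route as the paper: pick a short representative $\beta$ of the class, parametrize all lifts as $\beta+\iota X$ with $X\in\mathcal{O}_K$ (equivalently $\beta+s\iota+t\iota\omega$), and bound the range of $X$ that can possibly yield $|\beta+\iota X|\le|\iota|$, then enumerate. The paper packages the bounding step as an explicit inequality (Lemma~\ref{bounding}: if $|X|\ge 4(-d)+4d^2$ and $|\beta|\le(-d+d^2)|\iota|$ then $|\iota X+\beta|\ge|\iota|$) and obtains the short representative by an explicit change of basis to the $\{\iota,\sqrt{d}\,\iota\}$ grid, whereas you invoke general lattice-point-in-a-disk counting and successive-minima bounds; these are the same argument in different clothing, and your crude product bound on $(s,t)$ is exactly what Algorithm~\ref{resialg} does in step~6.
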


Note that the lifts $\phi_\mathfrak{I}^{-1}(\overline{\beta})$ are given precisely by $\iota X+\beta$ for $X\in \mathcal{O}_K$ and a choice of lift, $\beta$, since $\mathfrak{I}=(\iota)$. In general, we have the following lemma. 

\begin{lemma}\label{bounding}
Let $\iota,\beta,X\in \mathcal{O}_K$ be any elements. Assume $|X|\geq 4(-d)+4d^2$ and $(-d+d^2)|\iota|\geq |\beta|$. Then $|\iota X+\beta|\geq |\iota|$. 
\end{lemma}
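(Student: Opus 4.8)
The plan is to bound $|\iota X + \beta|$ below by $|\iota X| - |\beta|$ via the reverse triangle inequality, then show that $|\iota X|$ is so much larger than $|\beta|$ under the stated hypotheses that the difference still exceeds $|\iota|$. Concretely, $|\iota X + \beta| \geq |\iota||X| - |\beta|$, so it suffices to prove $|\iota||X| - |\beta| \geq |\iota|$, i.e. $|\iota|(|X| - 1) \geq |\beta|$.

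From the hypothesis $|X| \geq 4(-d) + 4d^2$, I would first note $|X| - 1 \geq 4(-d) + 4d^2 - 1 \geq 4(-d+d^2) - 1$. Since $d < 0$ and $d$ squarefree means $-d \geq 1$ and $d^2 \geq 1$, we have $-d + d^2 \geq 2$, so $4(-d+d^2) - 1 \geq -d + d^2$ (indeed $4t - 1 \geq t$ whenever $t \geq 1/3$, which holds). Hence $|X| - 1 \geq -d + d^2$. Combining with the second hypothesis $(-d + d^2)|\iota| \geq |\beta|$, we get
$$|\iota|(|X| - 1) \geq |\iota|(-d + d^2) \geq |\beta|,$$
which is exactly what was needed. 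Therefore $|\iota X + \beta| \geq |\iota||X| - |\beta| \geq |\iota|$.

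There is essentially no hard step here; the proof is a two-line estimate once one writes down the reverse triangle inequality. The only mild subtlety is checking the numerical inequality $4(-d) + 4d^2 - 1 \geq -d + d^2$ for all squarefree $d < 0$ — but this is immediate because the left side minus the right side equals $3(-d) + 3d^2 - 1 = 3(-d + d^2) - 1 \geq 3 \cdot 2 - 1 = 5 > 0$. I would present the argument in roughly three displayed lines and flag that $|\bullet|$ here denotes the field norm's absolute value (the standard complex absolute value on $K \subset \mathbb{C}$), which is multiplicative, justifying $|\iota X| = |\iota||X|$.
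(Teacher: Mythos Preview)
Your argument has one genuine gap: in this paper $|\cdot|$ is the \emph{field norm} $N_{K/\mathbb{Q}}$, not the complex modulus. This is explicit in the paper's own proof, where for $\iota X = a+b\sqrt d$ one writes $|\iota X|=a^{2}+(-d)b^{2}$, and it is what the surrounding application (comparing $N_{K/\mathbb{Q}}(\beta)$ to $N_{K/\mathbb{Q}}(\Lambda)$ in the trial-division step) requires. For the field norm the reverse triangle inequality $|\iota X+\beta|\ge |\iota X|-|\beta|$ that you invoke is simply false: already over $\mathbb{Z}$ with $n\mapsto n^{2}$, taking $\iota X=10$ and $\beta=-9$ gives $|\iota X+\beta|=1<19=|\iota X|-|\beta|$. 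So your first displayed inequality does not hold as written, and the remark at the end that $|\bullet|$ ``denotes the field norm's absolute value (the standard complex absolute value)'' conflates two different functions.

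The repair is immediate and, once made, yields an argument cleaner than the paper's coordinate-wise one. Since $\sqrt{|\cdot|}$ \emph{is} the complex absolute value on $K\subset\mathbb{C}$, apply the reverse triangle inequality there:
\[
\sqrt{|\iota X+\beta|}\;\ge\;\sqrt{|\iota|}\,\sqrt{|X|}-\sqrt{|\beta|}\;\ge\;2\sqrt{-d+d^{2}}\,\sqrt{|\iota|}-\sqrt{-d+d^{2}}\,\sqrt{|\iota|}\;=\;\sqrt{-d+d^{2}}\,\sqrt{|\iota|},
\]
using $|X|\ge 4(-d+d^{2})$ and $|\beta|\le(-d+d^{2})|\iota|$. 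Squaring gives $|\iota X+\beta|\ge(-d+d^{2})\,|\iota|\ge|\iota|$, since $-d+d^{2}\ge 2$ for squarefree $d<0$. The paper instead writes $\iota X=a+b\sqrt d$, $\beta=c+e\sqrt d$, bounds $|\iota X+\beta|\ge\max\bigl((|a|-|c|)^{2},\,(-d)(|b|-|e|)^{2}\bigr)$, and then runs a case analysis to reach an estimate of the shape $\tfrac12(-1+d)d(\sqrt{k}-1)^{2}|\iota|$ with $k=4$; your route, once the square root is inserted, bypasses that case split entirely.
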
 

\begin{proof}
If for a positive constant $k$, $|X|\geq k(-d)+kd^2$, then by multiplicativity of field norms, $|\iota X|\geq (k(-d)+kd^2)|\iota|$. Then we can write $\iota X=a+b\sqrt{d}$ with $a^2+b^2(-d)=|\iota X|\Rightarrow a^2\geq |\iota X|/2$ or $b^2(-d) \geq |\iota X|/2$. Writing $\beta = c+e\sqrt{d}$, we have an analogous inequality. Thus we have that 
\begin{align*}
    |\iota X + \beta|&\geq \max((|a|-|c|)^2,(|b|-|e|)^2d)\text{, since $|x+y\sqrt{d}|\geq x^2,y^2d$ for $d<0$,}\\
    &\geq (\sqrt{|\iota X|/2}-\sqrt{(-d+d^2)|\iota|/2})^2 \text{, since $|\beta|\leq (-d+d^2)|\iota|$,}\\
    &\geq (\sqrt{|\iota| (-kd+kd^2)/2} -\sqrt{(-d+d^2)|\iota|/2})^2 \text{, by assumption,}\\
    &= 1/2 (-1 + d) d (-1 + \sqrt{k})^2 |\iota|.
\end{align*} 
We want $1/2 (-1 + d) d (-1 + \sqrt{k})^2\geq 1$, and for all $d$ it suffices to choose $k=4$. 

\end{proof}



To prove theorem \ref{resi}, the general idea is, given a representative $\overline{\beta}$ of an equivalence class, find a new representative of the same equivalence class, $\beta$, with $(-d+d^2)|\iota|\geq |\beta|$, and then to apply theorem \ref{resi}. Let $\iota=a+b\sqrt{d}$. One may naively attempt to consider the image of $\overline{\beta}$ under the surjection with kernel $|\iota|\in \mathfrak{I}$, but this in worst case has $|\beta|=(|\iota|-1+\sqrt{d}(|\iota|-1))^2\sim |\iota|^2$. Attempting to modify Lemma \ref{bounding} to allow this larger bound yields that we must take that $|X|$ grows with $|\iota|$, which prohibits iterating through the lifts $\iota X+\beta$ for large $|iota|$.

Consider instead a coordinate plane with horizontal axis the real line and vertical axis given by real multiples of $\sqrt{d}$. Thus a point $(x,y)$ represents $x+y\sqrt{d}$. Form a new grid with sides $1(\iota)=a+b\sqrt{d},-1(\iota)=-a-b\sqrt{d},\sqrt{d}\iota=bd+a\sqrt{d},$ $-\sqrt{d}\iota=-bd-a\sqrt{d}$ by applying the change of coordinates matrix 
\begin{align*}
    \begin{bmatrix}
           a & -bd\\
           b & -a
    \end{bmatrix}
\end{align*}
to the plane. Each grid square now represents all distinct classes of elements of $\mathcal{O}_K$ modulo $(\iota)$. In particular, in this new coordinate system, moving one step in any direction corresponds to adding a multiple of $\iota$ and thus adding $0 \mod (\iota)$.

To transform $\overline{\beta}=c+e\sqrt{d}$ into these coordinates, solve 

\begin{align*}
    \begin{bmatrix}
           c\\
           e
    \end{bmatrix}&= 
    \begin{bmatrix}
           a\\
           b
    \end{bmatrix} A+
    \begin{bmatrix}
           b(-d)\\
           -a
    \end{bmatrix}B
 \end{align*}
 for $(A,B)$. Then consider $(A',B')=(A-\lfloor{A}\rfloor,B-\lfloor{B}\rfloor)$. Then take 
 
 $$\beta=A'\begin{bmatrix} a\\b\end{bmatrix}+B'\begin{bmatrix}b(-d)\\-a\end{bmatrix}$$
 
 as a representative of the same equivalence class as $\overline{\beta}$ (since we have subtracted elements of $\iota$), that lies within the four grid boxes nearest the origin (those with coordinates $(\pm 1,\pm 1)$). Importantly, both the real and imaginary parts of $\beta$ are bounded above in magnitude by the magnitude of the real and imaginary parts of the four coordinate boxes (given in the new coordinates by $(\pm 1,\pm 1)$) nearest the origin, so: 
\begin{align*}
    |\beta|\leq |\max(|bd|,|a|)+\sqrt{d}\max(|a|,|b|)|&\leq \max(a^2(1-d),b^2d^2(1-d))\\
    &\leq d^2b^2(1-d)-da^2(1-d)\\&=|\iota||-d+d^2|
\end{align*} 
In particular this element $\beta$ satisfies the norm size constraint of Lemma \ref{bounding}. This suffices to prove \ref{resi} as seen in the next section.


\section{Implementation and Runtime}

We now provide the aforementioned Las Vegas algorithm. 

From the discussion in the previous section, we get the following algorithm which proves Theorem \ref{resi}: 

\begin{algorithm}\label{resialg}
    \begin{align*}
        &\text{1. } K=\mathbb{Q}[\sqrt{d}], \overline{\beta}\in \mathcal{O}_K/(\iota), \iota=a+b\sqrt{d}, \overline{\beta}=c+e\sqrt{d}\\
        &\text{2. Compute $(A,B)=$ the solution of $\begin{bmatrix} 
           c\\
           e
         \end{bmatrix}= 
         \begin{bmatrix}
           a\\
           b
         \end{bmatrix} A+
         \begin{bmatrix}
           b(-d)\\
           -a
         \end{bmatrix}B$} \\
         &\text{3. Compute $(A',B')= (A-\lfloor{A}\rfloor,B-\lfloor{B}\rfloor)$ to sufficient precision}\\
         &\text{4. Compute } \beta = A'
         \begin{bmatrix}
           a\\
           b
         \end{bmatrix}+B'
         \begin{bmatrix}
           b(-d)\\
           -a
         \end{bmatrix}\\
         &\text{5. Initialize return values $\{\}$}\\
         &\text{6. For all integers } 0\leq N,M < \sqrt{4(-d)+4d^2},\\ 
         &\text{if $|(N+\sqrt{d}M)\iota+\beta|\leq |\iota|$, append $\{(N,M)\}$ to the result values}\\
         &\text{7. Return the result values}
\end{align*}
\end{algorithm}

\begin{prop}
    Algorithm \ref{resialg} runs in $O(4(-d)+4d^2)$ steps. 
\end{prop}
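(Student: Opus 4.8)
The plan is to analyze Algorithm \ref{resialg} line by line and bound the cost of each step, observing that every step except step 6 is a fixed-size computation (solving a $2\times 2$ linear system, taking floor functions, a couple of matrix-vector products), and hence contributes only $O(1)$ — or, being slightly more careful about the cost of arithmetic on the number-field elements involved, a cost that is absorbed into the soft-$O$/bit-complexity conventions used throughout the paper and is in any case independent of $d$. The one step whose count genuinely depends on $d$ is the double loop in step 6.

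First I would note that in step 6 the integers $N, M$ range over $0 \le N, M < \sqrt{4(-d) + 4d^2}$, so the loop body (the single norm evaluation $|(N + \sqrt{d}M)\iota + \beta|$ together with the comparison to $|\iota|$ and the conditional append) is executed at most $\lceil \sqrt{4(-d)+4d^2}\,\rceil^2 = O(4(-d) + 4d^2)$ times. Each execution of the loop body is a constant number of arithmetic operations (one multiplication in $\mathcal{O}_K$, one addition, one norm, one comparison), so the total work in step 6 is $O(4(-d) + 4d^2)$ steps. Adding the $O(1)$ contributions from steps 1–5 and 7 gives the claimed bound $O(4(-d) + 4d^2)$.

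Second, I would justify that the loop range in step 6 actually suffices to capture all the lifts promised by Theorem \ref{resi}, i.e.\ that the algorithm is not only fast but correct: by the construction in the previous section, the representative $\beta$ produced in step 4 satisfies $|\beta| \le |\iota|\,|{-d + d^2}|$, so by Lemma \ref{bounding} any lift $\iota X + \beta$ with $|X| \ge 4(-d) + 4d^2$ has norm $\ge |\iota|$; hence every lift with norm $\le |\iota|$ has $|X| < 4(-d) + 4d^2$, and since for $X = N + \sqrt{d}M$ we have $|X| = N^2 - dM^2 \ge \max(N^2, -dM^2) \ge \max(N^2, M^2)$, it follows that $|N|, |M| < \sqrt{4(-d) + 4d^2}$. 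So the loop over $0 \le N, M < \sqrt{4(-d)+4d^2}$ (together, implicitly, with the sign choices, which only multiplies the count by a constant) sees every relevant lift, and each is either recorded or ruled out by the explicit norm comparison; this simultaneously establishes the "found or ruled out in deterministic $O(4(-d)+4d^2)$ steps" clause of Theorem \ref{resi}.

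The main obstacle I expect is not the counting, which is essentially bookkeeping, but being precise about what a "step" means here — in particular whether the arithmetic in $\mathcal{O}_K$ and the floor computations in step 3 ("to sufficient precision") are being counted as unit-cost operations or as $\widetilde{O}(\log N)$-bit operations, and making sure the constant implicit in step 3's "sufficient precision" does not secretly depend on $d$ in a way that breaks the clean $O(4(-d)+4d^2)$ statement. I would handle this by adopting the same convention as elsewhere in the paper (field operations on elements of bounded height cost $\widetilde{O}(\log N)$, which is suppressed when we speak of "steps" in $d$), and remarking that the precision needed in step 3 to correctly separate the integer and fractional parts of $A, B$ depends only on the entries of the fixed change-of-coordinates matrix and hence only on $d$ through an $O(1)$-in-$d$ (indeed $O(\log|d|)$) amount, so that it too is swept into the per-step cost and does not affect the step count.
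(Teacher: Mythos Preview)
Your proposal is correct and takes the same approach as the paper: the paper's entire proof is the single word ``Inspection,'' and your line-by-line analysis simply spells out what that inspection amounts to, with step 6's double loop of $O(4(-d)+4d^2)$ iterations dominating. Your second paragraph on correctness goes beyond what the proposition asserts (it is really the proof of Theorem \ref{resi}), but nothing in it is wrong.
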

\begin{proof}
    Inspection. 
\end{proof}

We now seek to implement the Las Vegas test based on Theorem \ref{fast} and the subsequent discussion. We will utilize notation as in Theorem \ref{fast}. Recall that we are assuming an elliptic curve $E/H$ with CM by $\mathcal{O}_K$ and good reduction modulo $\mathfrak{N}$ in precomputation. Before beginning the following algorithm, compute $\psi(\mathfrak{N})^k$ utilizing Definition \ref{comp} of the section Hecke Character Properties. The heuristic runtime of this step is dominated by that of the following algorithm, and it may be done in precomputation for a sequence of integers using reciprocity laws as in \cite[Section 6]{suther} if one wishes to make the step deterministic. 

\begin{algorithm}\label{1}
    Let notation be as above and fix some $\mathfrak{N}\nmid (2)$. Assume $N>1$. Let $(\psi(\mathfrak{N})^k-1)=\Gamma\mathfrak{q}^{e'}$ with $\mathfrak{q}\nmid (2)$ a non-inert principal prime and $q^{e'}:=N_{K/\mathbb{Q}}(\mathfrak{q}^{e'})>N^{1/2+\alpha}$. Further assume $(\psi(\mathfrak{N})^m-1)\not\in \mathfrak{q}^{y}$ for $0< m<k$ and $\mathfrak{q}^y>N^{1/2}$, and that $(q^{y})\nmid ((\psi(\mathfrak{N}))^k-1)$. Choose an $f_k$ as in Theorem \ref{fast}.
    \begin{align*}
        &\text{1. Choose some $Q\in E((\mathcal{O}_H/\mathfrak{N})[x]/(f_k))$}\\
        &\text{2. Compute $P=\left[\frac{(\psi(\mathfrak{N})^k-1)(\overline{\psi(\mathfrak{N})}^k-1)}{{q}^{e_q}}\right]Q\mod \mathfrak{N}$}\\
        &\text{3. Compute and store $[q^x]P \mod \mathfrak{N}$ until $[q^{x+2}]P\mod \mathfrak{N}$ is computed,}\\
        &\text{for $x=0,1,...,e'$ until $[q^x]P\equiv O_E\mod \mathfrak{N}$.}\\
        &\text{If this does not hold for any such $x$, then return \emph{composite}}\\
        &\text{4. Check that $[q^{x-1}]P$ is strongly nonzero modulo $\mathfrak{N}$}\\
        &\text{If this does not hold, then return \emph{composite}}\\
        &\text{5. Check if ${q}^x>N^{1/2}$. If so, return \emph{possibly prime}}\\
        &\text{If not, return \emph{probably composite}}
    \end{align*}
\end{algorithm}

\begin{remark}\label{cute}
We can efficiently compute modulo $\mathfrak{N}$ by simply partially reducing modulo $N=N_{H/\mathbb{Q}}(\mathfrak{N})$. Then when checking whether some point $P=[x_0:y_0:z_0]\equiv O_E\mod \mathfrak{N}$, one can for example check that $a_i\in\mathfrak{N}$ in $\mathcal{O}_H$ for each $a_i$ in $z_0=\sum_{i=0}^{k-1} a_i x^i$. 
\end{remark}

\begin{remark}\label{a}
    A similar algorithm can be developed for the more general case of Theorem \ref{lens}, but complex isogenies must be precomputed for condition $(3)$ of Theorem \ref{lens}, and one must test multiple strongly nonzero conditions, one for each $\mathfrak{q}$. This increasing the complexity of the algorithm to cubic in $\log N$ in the worst case, although for a small number of prime factors $\mathfrak{q}$, say $O(\log \log N)$, the algorithm remains quasi-quadratic. However, Theorem \ref{prob} does not apply and so the algorithm in the general case does not certify primality for almost all choices of input point $Q$. 
\end{remark}

\begin{prop}\label{runtime}
    Algorithm \ref{1} is quasi quadratic in $c=O(\log N)$ for a fixed $k$. 
\end{prop}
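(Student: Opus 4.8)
\textbf{Proof plan for Proposition \ref{runtime}.} The plan is to show that every step of Algorithm \ref{1} costs $\widetilde{O}(k^2 \log^2 N) = \widetilde{O}(\log^2 N)$ for fixed $k$, by reducing all arithmetic to operations in the ring $R = (\mathcal{O}_H/\mathfrak{N})[x]/(f_k)$, which by Remark \ref{cute} can be carried out as arithmetic modulo $N$ (lifting $\mathcal{O}_H$ to a fixed integral basis, so an element of $R$ is a vector of $O(k h_H) = O(k)$ integers each of size $O(\log N)$). First I would record that a single addition or multiplication in $R$ costs $\widetilde{O}(k \log N)$ bit operations using fast polynomial and integer multiplication, since multiplication modulo the sparse polynomial $f_k = x^k - a$ is just a cyclic-type convolution of length $k$ followed by one reduction, and multiplication in $\mathcal{O}_H/\mathfrak{N}$ is a bounded number of integer multiplications mod $N$. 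Then a group operation on $E(R)$ (point addition or doubling, via the usual affine or projective formulas) is a fixed number of such ring operations, hence $\widetilde{O}(k \log N)$; and a scalar multiple $[\mu]Q$ for $\mu \in \mathcal{O}_K$ with $N_{K/\mathbb{Q}}(\mu) = O(N^c)$ costs $O(\log N)$ group operations by double-and-add (writing $\mu = u + v\sqrt{d}$ and using the $\mathcal{O}_K$-module structure from Lemma \ref{hecke}, or simply bounding $|\mu|$), for a total of $\widetilde{O}(k \log^2 N)$.

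Next I would go through the steps of Algorithm \ref{1}. Step 1 (choosing $Q$) is $O(k \log N)$ to write down. Step 2 computes $P = [(\psi(\mathfrak{N})^k - 1)(\overline{\psi(\mathfrak{N})}^k - 1)/q^{e_q}]Q$; the scalar here has norm at most $N^{k+1}$-ish — more precisely $N_{K/\mathbb{Q}}(\psi(\mathfrak{N})^k - 1) \le (N^{1/2}+1)^{2k} = O(N^k)$ and dividing by $q^{e_q} > N^{1/2}$ only shrinks it — so it is an $O(k \log N)$-bit scalar and the scalar multiplication is $O(k \log N)$ group operations, i.e. $\widetilde{O}(k^2 \log^2 N)$ total. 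Step 3 computes the iterated multiples $[q^x]P$ for $x = 0, 1, \dots$ up to at most $e' = O(\log N / \log q) = O(\log N)$ values, each obtained from the previous by one scalar multiplication by $q$ (an $O(\log q) = O(\log N)$-bit scalar), so $O(\log N)$ rounds of $O(\log N)$ group operations, again $\widetilde{O}(k \log^2 N)$; the identity test $[q^x]P \equiv O_E \bmod \mathfrak{N}$ is, by Remark \ref{cute}, checking whether the $O(k)$ coordinate integers of $z_0$ lie in $\mathfrak{N}$, which is $\gcd$-style and $\widetilde{O}(k \log N)$. Step 4 (strongly-nonzero) is the same kind of $\gcd$ check against $N$ as in Remark \ref{stronglynonzero}, $\widetilde{O}(k \log N)$. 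Step 5 is one comparison $q^x > N^{1/2}$, negligible. I would also note that the precomputation of $\psi(\mathfrak{N})^k$ via Definition \ref{comp} involves only a bounded number of power-residue-symbol computations modulo $\mathfrak{N}$ and $O(1)$-bounded auxiliary quantities, each of which is a quadratic- or quartic-reciprocity evaluation costing $\widetilde{O}(\log^2 N)$, dominated by the above.

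Summing over the $O(1)$ steps, the dominant contribution is the $O(\log N)$ group operations times $\widetilde{O}(k \log N)$ per operation from Steps 2 and 3, giving total cost $\widetilde{O}(k^2 \log^2 N) = \widetilde{O}(\log^2 N)$ for fixed $k$, which is the claim. I expect the main technical point — the only place the argument is more than bookkeeping — to be Step 3: one must argue that in the "probably composite" or genuinely composite case the loop still terminates after $O(\log N)$ iterations (it does, since we cap $x$ at $e'$ and return \emph{composite} otherwise), and that storing only the last few $[q^x]P$ (as the algorithm text says) suffices, so the space and per-round work stay $\widetilde{O}(k \log N)$; everything else is a routine reduction of elliptic-curve and finite-ring arithmetic to fast integer multiplication. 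I would close by remarking that the word "heuristic" enters only through the assumed availability of the factorization $(\psi(\mathfrak{N})^k - 1) = \Gamma \mathfrak{q}^{e'}$ and the splitting data in precomputation, exactly as in Lenstra's test, and is not needed for the runtime bound of the algorithm proper once that data is given.
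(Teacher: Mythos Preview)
Your approach is essentially the same as the paper's: bound the cost of a single ring multiplication in $(\mathcal{O}_H/\mathfrak{N})[x]/(f_k)$ via fast integer multiplication, observe that each elliptic-curve group law is a bounded number of such multiplications, and then count the total number of group operations in Steps~2 and~3 as $O(k\log N)$ via double-and-add, the scalar in Step~2 being of size $O(k\log N)$ by Hasse--Weil. The paper does exactly this, only more tersely (it quotes Sch\"onhage--Strassen for the ring multiplication and lumps Steps~3--5 together as ``dominated'').

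One small wobble to tighten: in your Step~3 count you bound $e' = O(\log N/\log q)$ rounds and $O(\log q)$ group operations per round \emph{separately} by $O(\log N)$, then multiply to get ``$O(\log N)$ rounds of $O(\log N)$ group operations,'' which taken literally is $O(\log^2 N)$ group operations and would give a cubic bound. What you actually want (and what makes your stated conclusion $\widetilde{O}(k\log^2 N)$ correct) is that the \emph{product} $e'\cdot\log q = \log(q^{e'}) = O(k\log N)$, so the total number of group operations in Step~3 is $O(k\log N)$, matching Step~2. With that one-line fix your argument is complete and agrees with the paper's.
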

\begin{proof}
    For step $2$, use 'binary exponentiation' to compute $P$ as 

    $$P=\bigoplus_j [2^j] Q,$$
    
    where
    
    $$M=\sum_j 2^j = \frac{(\psi(\mathfrak{N})^k-1)(\overline{\psi(\mathfrak{N})}^k-1)}{{q}^{e_q}}$$
    
    To compute a single isogeny $[2]Q$ or $Q_1+Q_2$, use the standard formula for isogenies, and there are at most $2\log M$ such isogenies to compute in this step (compute $[2^x]Q$, $1\leq x\leq j$ and the addition isognies) \cite{silver}. The cost of computing each isogeny is asymptotically the cost of multiplication in $(\mathcal{O}_H/\mathfrak{N})[x]/(f_k)$. The cost of multiplication given remark \ref{cute} and by using the Schonhage-Strassen algorithm on the real and imaginary parts of elements of $\mathcal{O}_H$ modulo $N$ is $O(k^2(2)^2 \log N \log\log N)$ with $K=\sqrt{d}$ \cite{sch,suther}. Since $2\log M$ is $O(k\log N)$ by the Hasse-Weil bound \cite{silver}, this step is $O(\log^2 N \log\log N)$ for fixed $k$. The next isogenies to compute are $[q^x]Q$ which by the same analysis is $O(\log^2 N \log\log N)$. Notice that the other steps are dominated by this. Thus the complexity is $O(c^2 \log c)$ for fixed $k$. 
\end{proof}

If \emph{possibly prime} is returned, run Algorithm \ref{resialg} for $K$, $\psi(\mathfrak{N})^m$, $0\leq m\leq k-1$, and $(\iota)=\mathfrak{q}^{x}$. Check if the norms of any of the results in the result values divide $N$. If not, return \emph{prime}. If one does, return \emph{composite}. These return values are correct by Theorem $2.17$.

\begin{prop}
    In algorithm \ref{1}, if \emph{prime} is returned, then $N$ is prime. If \emph{composite} is returned, then $N$ is composite. 
\end{prop}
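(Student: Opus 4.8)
The plan is to check soundness of the two definitive verdicts in turn, treating \emph{composite} first because it is the easier half, and reducing \emph{prime} to a direct invocation of Theorem~\ref{fast} together with the exhaustive enumeration supplied by Theorem~\ref{resi} (Algorithm~\ref{resialg}).

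For the \emph{composite} verdict I would first list where it can be issued: in step~3 of Algorithm~\ref{1} (no $x\le e'$ with $[q^x]P\equiv O_E\bmod\mathfrak{N}$), in step~4 ($[q^{x-1}]P$ not strongly nonzero modulo $\mathfrak{N}$), or in the post-processing when Algorithm~\ref{resialg} produces a lift $\beta$ of some $\psi(\mathfrak{N})^m$ whose norm properly divides $N$. The last case is immediate, since $N_{K/\mathbb{Q}}(\beta)$ is then an integer strictly between $1$ and $N$ that divides $N$. For steps~3 and~4 I argue by contraposition. If $N$ were prime then, after ruling out perfect powers, Remark~\ref{rem} forces $\mathfrak{N}$ to be a prime ideal of $\mathcal{O}_H$, so $R=(\mathcal{O}_H/\mathfrak{N})[x]/(f_k)$ is a field (the primitive non-residue $a$ makes $f_k=x^k-a$ irreducible modulo the prime $\mathfrak{N}$) and, by Lemma~\ref{hecke} and the point count recalled just before Theorem~\ref{prob}, $\#E(R)=\#E_{p,k}=(\psi(\mathfrak{N})^k-1)(\overline{\psi(\mathfrak{N})}^k-1)$, which is divisible by $q^{e'}$. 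Hence the $P$ of step~2 satisfies $[q^{e'}]P=[\#E_{p,k}]Q=O_E$, so step~3 halts with some $x\le e'$ and does not output \emph{composite}; and over the field $R$ a point equals $O_E$ exactly when it fails to be strongly nonzero, so minimality of $x$ makes $[q^{x-1}]P$ strongly nonzero and step~4 does not output \emph{composite} either. Therefore a \emph{composite} verdict at step~3 or~4 can occur only when $N$ is composite.

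For the \emph{prime} verdict I would note it is issued only after step~5 has returned \emph{possibly prime} and Algorithm~\ref{resialg} has found no lift of any $\psi(\mathfrak{N})^m$, $0\le m<k$, whose norm properly divides $N$. Reaching \emph{possibly prime} means step~3 furnished $x$ with $[q^x]P\equiv O_E\bmod\mathfrak{N}$, step~4 checked $[q^{x-1}]P$ strongly nonzero modulo $\mathfrak{N}$, and step~5 checked $q^x>N^{1/2}$; putting $e_q:=x$ and $\Lambda:=\mathfrak{q}^{x}$ (a divisor of $\psi(\mathfrak{N})^k-1$, the cofactor going into $\Gamma$) these are precisely conditions~(1),~(2) and the size hypothesis $N_{K/\mathbb{Q}}(\mathfrak{q}^{e_q})>N^{1/2}$ of Theorem~\ref{fast}. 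The side hypotheses of Theorem~\ref{fast} ($\mathfrak{N}\nmid(2)$, $\mathfrak{q}\nmid(2)$ a non-inert principal prime, $(\psi(\mathfrak{N})^m-1)\notin\mathfrak{q}^{e_q}$ for $0<m<k$, and $q^{e_q}\nmid\psi(\mathfrak{N})^{2k}-1$) are exactly the data Algorithm~\ref{1} carries in precomputation, and I would check that they pass from the precomputed exponents $e'$ and $y$ to the run-time exponent $x$. Theorem~\ref{fast} then gives $\psi(\mathfrak{p})=\psi(\mathfrak{N})^m$ in $\mathcal{O}_K/\mathfrak{q}^{e_q}$ for some prime $\mathfrak{p}\mid\mathfrak{N}$ and $0\le m<k$. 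Finally, Theorem~\ref{resi} applied through Algorithm~\ref{resialg}, with $(\iota)$ a generator of $\mathfrak{q}^{x}$ and $\overline\beta=\psi(\mathfrak{N})^m\bmod\mathfrak{q}^x$, outputs every lift of norm at most $N_{K/\mathbb{Q}}(\mathfrak{q}^x)=q^x$, hence in particular every residue $\beta$ of $\psi(\mathfrak{N})^m$ with $N_{K/\mathbb{Q}}(\beta)\le N^{1/2}<q^x$; since the algorithm has confirmed none of these has norm properly dividing $N$, the trailing clause of Theorem~\ref{fast} applies and $N$ is prime.

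The hard part will be the bookkeeping in the \emph{prime} case rather than any new idea: one must be sure the exponent $x$ returned by step~3 is at least the threshold exponent for which the precomputed conditions $(\psi(\mathfrak{N})^m-1)\notin\mathfrak{q}^{\bullet}$ and the non-divisibility of $\psi(\mathfrak{N})^{2k}-1$ were verified, so that all side hypotheses of Theorem~\ref{fast} truly hold for $\Lambda=\mathfrak{q}^{x}$, and one must confirm that the $O(4(-d)+4d^2)$ lifts produced by Algorithm~\ref{resialg} genuinely exhaust the range $N_{K/\mathbb{Q}}(\beta)\le N_{K/\mathbb{Q}}(\Lambda)$ in the trial-division clause of Theorem~\ref{fast}. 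Once those alignments are written out, the proposition follows immediately from Theorems~\ref{fast} and~\ref{resi} and the prime-ideal description in Remark~\ref{rem}.
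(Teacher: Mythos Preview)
Your proposal is correct and follows essentially the same route as the paper: invoke Theorem~\ref{fast} (together with the residue enumeration of Theorem~\ref{resi}/Algorithm~\ref{resialg}) for the \emph{prime} verdict, and argue that the \emph{composite} verdicts at steps~3 and~4 cannot arise when $\mathfrak{N}$ is prime. The paper's proof is a three-sentence sketch; your write-up supplies the details it omits, including the post-processing case and the bookkeeping that the run-time exponent $x$ inherits the precomputed side hypotheses from $y$ and $e'$---a point the paper simply leaves implicit. One cosmetic difference: for step~4 the paper argues directly that a failed strongly-nonzero check exposes a proper factor of $\mathfrak{N}$ via the gcd in Remark~\ref{stronglynonzero}, while you reach the same conclusion by contraposition; both are fine.
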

\begin{proof}
    The prime case follows directly from Theorem \ref{fast} and the subsequent discussion. If composite is returned in step $3$, then $[(\psi(\mathfrak{N})^k-1)(\overline{\psi(\mathfrak{N})}^k-1)]Q\not\equiv 0\mod \mathfrak{N}$ and so by definition of the Hecke character (Lemma \ref{hecke}), $\mathfrak{N}$ is not prime. If composite is returned in step $4$, then a proper factor of $\mathfrak{N}$ was found. 
\end{proof}

The remaining case is when \emph{probably composite} is returned. In this case by Lemma \ref{prob}, with probability at least $1-1/N^{\alpha/2}$, $N$ is composite. Thus run it through the Miller-Rabin compositeness test, which proves a number is composite in average $O(\log^2 N)$ time \cite{mill,mill2,mill3}. If this does not terminate in $c'\log^2 N$ for some small $c'$, utilize the AKS primality test which runs in $\widetilde{O}(\log^6 N)$ \cite{AKS}.


\begin{theorem}
    Fix $K,k$. Choose a random $Q$ as above. Running Algorithm \ref{1} for an $N$ that satisfies its conditions, along with the subsequent discussion yields a Las Vegas algorithm for primality with average runtime $\widetilde{O}(\log^2 N)$ for large enough $\alpha$. Further if $N$ is prime, it is proven prime in $\widetilde{O}(\log^2 N)$ time for $1-1/N^{\alpha/2}$ of the input parameters $Q$.  
\end{theorem}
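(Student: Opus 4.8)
The plan is to assemble the final theorem from the pieces already established, treating it as a bookkeeping argument that combines the correctness statements (Theorem \ref{fast}, Theorem \ref{prob}, Algorithm \ref{resialg}/Theorem \ref{resi}) with the runtime estimate (Proposition \ref{runtime}) and the known complexities of the Miller--Rabin and AKS tests. First I would fix $K$ and $k$ and observe that, by Proposition \ref{runtime}, each pass through Algorithm \ref{1} costs $\widetilde{O}(\log^2 N)$; the trial-division postprocessing via Algorithm \ref{resialg} costs $O(4(-d)+4d^2)$ steps, which for fixed $K$ is a constant, plus $O(k)$ norm-divisibility checks each costing $\widetilde{O}(\log N)$, so the postprocessing is negligible. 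Hence the ``possibly prime $\Rightarrow$ prime'' branch and the ``composite'' branches all terminate in $\widetilde{O}(\log^2 N)$, and their outputs are correct by the propositions already proved (the prime case from Theorem \ref{fast} and the residue enumeration of Theorem \ref{resi}, the composite cases from the definition of the Hecke character and from an explicit proper factor).

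Next I would handle the Las Vegas / average-runtime claim. The only branch that does not immediately settle the question is ``probably composite'', i.e. when $q^x \le N^{1/2}$. Here I invoke Theorem \ref{prob}: for the point $P = [\#E_{p,k}/q^{e'}]Q$ with a uniformly random $Q$, conditions (1),(2) of Theorem \ref{fast} hold (hence ``possibly prime'' is reached and, after the trial-division step, $N$ is certified prime when $N$ is in fact prime) with probability at least $1 - 1/N^{\alpha/2}$. Therefore: if $N$ is prime, with probability $\ge 1-1/N^{\alpha/2}$ over the choice of $Q$ the algorithm reaches the ``possibly prime'' branch and proves primality in $\widetilde{O}(\log^2 N)$ — this is exactly the second sentence of the theorem. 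If instead ``probably composite'' is returned, then by the contrapositive of Theorem \ref{prob} this event has probability at most $1/N^{\alpha/2}$ when $N$ is prime, so conditioned on reaching this branch $N$ is composite with overwhelming probability; we then run Miller--Rabin, which proves compositeness of a composite $N$ in expected $\widetilde{O}(\log^2 N)$ time, and we cap it at $c'\log^2 N$ steps, falling back to AKS ($\widetilde{O}(\log^6 N)$) only in the rare remaining case.

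To get the \emph{average} runtime bound I would tally the expected cost as a weighted sum: with probability $\ge 1 - 1/N^{\alpha/2}$ the cost is the $\widetilde{O}(\log^2 N)$ of Algorithm \ref{1} plus postprocessing; with probability $\le 1/N^{\alpha/2}$ we additionally pay for Miller--Rabin (expected $\widetilde{O}(\log^2 N)$), and with probability at most $1/N^{\alpha/2}$ times the (tiny) chance Miller--Rabin fails to terminate within $c'\log^2 N$ on a prime we pay the AKS cost $\widetilde{O}(\log^6 N)$. The dominant term is $\widetilde{O}(\log^2 N)$, and the tail contributions are bounded by $N^{-\alpha/2}\cdot\widetilde{O}(\log^6 N) = o(1)$ for $\alpha$ large enough (in fact any fixed $\alpha>0$ suffices since $N^{-\alpha/2}$ beats any polylogarithmic factor), so the average runtime is $\widetilde{O}(\log^2 N)$. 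I would also note that this is a genuine Las Vegas algorithm: every output (``prime'', ``composite'') is unconditionally correct by the preceding propositions, and only the \emph{runtime} is randomized.

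The main obstacle I anticipate is not a deep one but a careful one: making the probabilistic accounting airtight. In particular one must be precise that Theorem \ref{prob} gives the probability over $Q$ that conditions (1),(2) hold with a \emph{useful} exponent $e_q$ (namely $q^{e_q}>N^{1/2}$), and separate that from the event that Miller--Rabin terminates quickly (which is a probability over Miller--Rabin's own coins, independent of $Q$), so that the two randomized sources compose correctly in the expectation; and one must confirm that the fallback to AKS contributes only $o(1)$ to the average rather than dominating. Secondarily, one should double-check the edge case where $N$ happens to be composite but still reaches ``possibly prime'' — here the trial-division step of Theorem \ref{resi}/Algorithm \ref{resialg} is exactly what catches it, so the claim ``if \emph{prime} is returned then $N$ is prime'' must be cited rather than re-proved, and the runtime of that enumeration ($O(4(-d)+4d^2)$, constant for fixed $K$) folded into the total.
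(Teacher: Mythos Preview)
Your proposal is correct and follows essentially the same approach as the paper: combine Proposition \ref{runtime} for the cost of Algorithm \ref{1}, note that Algorithm \ref{resialg} is dominated for fixed $K$, invoke Theorem \ref{prob} to bound the probability that a prime $N$ reaches the ``probably composite'' branch by $1/N^{\alpha/2}$, and then compute the expected runtime as a weighted sum with the Miller--Rabin and AKS fallbacks. Your write-up is in fact more careful than the paper's (you separate the randomness of $Q$ from that of Miller--Rabin, and you observe that any fixed $\alpha>0$ already suffices to kill the $N^{-\alpha/2}\cdot\widetilde{O}(\log^6 N)$ tail), but the structure and the ingredients are the same.
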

\begin{proof}
    The runtime of Algorithm \ref{resialg} is dominated by that of Algorithm \ref{1} for fixed $K=\mathbb{Q}[\sqrt{d}]$. By the above analysis, if \emph{prime} or \emph{composite} is returned before the AKS primality test is used, we have a runtime of $\widetilde{O}(\log^2 N)$. The AKS primality test has to be used for a prime input $N$ with probability less than or equal to $1/N^{\alpha/2}$ by Theorem \ref{prob}. Thus if $N$ is prime the average runtime is $O((1-1/N^{\alpha/2})\widetilde{O}(\log^2 N)+1/N^{\alpha/2}\widetilde{O}(\log^6 N))$. If $\alpha\leq 1$ is large enough, this average runtime is quasi-quadratic. It is known that the average runtime of the Miller Rabin compositeness test is $\widetilde{O}(\log^2 N)$ for composite $N$ \cite{mill,mill2,mill3}. Thus if $N$ is composite, the overall average runtime is quasi-quadratic as well. 

    The latter claim for when $N$ is prime is a direct consequence of Theorem \ref{prob}.
\end{proof}

\subsection*{Example Primality Tests}

We outline one more simplification in this section to make it unnecessary to compute $\mathfrak{N}\subset \mathcal{O}_H$ explicitly.

\begin{remark}\label{rem1}
    Note that class field theory straightforwardly describes the splitting of prime ideals in the Hilbert class field extension $L/K$ of a quadratic imaginary field $K$. In particular, if $\mathfrak{k}$ is prime ideal of $\mathcal{O}_K$, then $\mathfrak{k}$ splits into $h_K/n$ where $h_K$ is the class number of $K$, and $n$ is the order of $\mathfrak{k}$ in the class group $Cl(\mathcal{O}_K)$. 
\end{remark}

In particular if we wish to test the primality of a rational (non-power) integer $M$ that splits into two principal ideals $(\iota),(\overline{\iota})$ in $\mathcal{O}_K$, we cannot be sure that every prime ideal factor of $(\iota)$ is principal. Say $\mathfrak{M}$ lies above $(\iota)$ in $H/K$. If $M$ is prime, however, then $(\iota)$ is a principal prime ideal and by Remark \ref{1}, $(\iota)$ splits completely in $H/K$. Thus $N_{H/K}(\mathfrak{M})=(\iota)$ and $\psi(\mathfrak{M})=\pm \iota$ by Definition \ref{comp}. 

We can utilize Theorem \ref{fast} and Algorithm \ref{1}, assuming $M$ is prime. However since $\mathfrak{N}$ is not computed, we must more subtly check conditions $(1),(2)$. Since $M$ splits completely in $\mathcal{O}_H$, we can carry out computations not on $E/H$ modulo $\mathfrak{M}$ but on the curve $E_b/\mathbb{Q}$ modulo $M$ with coefficients transformed under the isomorphism $\phi: (\mathcal{O}_H/\mathfrak{M})[x]/(f)\rightarrow \mathbb{Z}/M\mathbb{Z}$, which would require precomputing roots modulo $M$ for the sequence of rational integers to test. Alternatively one could do computations on $E/H$ modulo $\mathfrak{M}$ via remark \ref{cute} and checking conditions $(1),(2)$ of Theorem \ref{fast} as follows: 

\begin{lemma}
    Use notation as above and in Theorem \ref{fast}. If $[q^{e_q}]P\equiv O_E\mod M$ then condition $(1)$ is satisfied. Otherwise, write $[q^{e_q}]P\mod M=[x_0:y_0:\sum_{i=0}^{k-1} a_i x^i]$ and check whether $M|N_{H/\mathbb{Q}} (a_i)$ for each $a_i$. If so, condition $(1)$ is satisfied for some $\mathfrak{M}$ above $(\iota)$ in $H/K$; if not, condition $(1)$ does not hold. 
\end{lemma}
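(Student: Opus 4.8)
The strategy is to unwind what condition $(1)$ of Theorem \ref{fast} asks once $\mathfrak{N}$ is specialized to a prime $\mathfrak{M}$ of $\mathcal{O}_H$ lying above $(\iota)$, and to convert the resulting membership conditions $a_i\in\mathfrak{M}$ into divisibilities visible after taking norms down to $\mathbb{Q}$. Throughout I use that $M$ is assumed prime, so $(\iota)$ is a degree-one prime of $\mathcal{O}_K$ which, being principal, splits completely in $H/K$ by Remark \ref{rem1}; hence every prime $\mathfrak{M}\mid(\iota)$ of $\mathcal{O}_H$ has $N_{H/\mathbb{Q}}(\mathfrak{M})=M$ and residue field $\mathbb{F}_M$, and $M\mathcal{O}_H=\prod_{\mathfrak{P}\mid M}\mathfrak{P}$ with each $\mathfrak{P}$ of norm $M$.

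For the first implication: if $[q^{e_q}]P\equiv O_E\bmod M$ then, writing the $z$-coordinate of $[q^{e_q}]P$ as $\sum_i a_i x^i$, each $a_i\in M\mathcal{O}_H$. Since $M\mathcal{O}_H\subseteq\mathfrak{M}$ for any prime $\mathfrak{M}\mid(\iota)$, we get $a_i\in\mathfrak{M}$ for all $i$, and by the criterion from the Notation subsection ($P\equiv O_E\bmod(\mathfrak{M},f_k)\Leftrightarrow a_i\in\mathfrak{M}$ for all $i$) this says exactly $[q^{e_q}]P\equiv O_E\bmod\mathfrak{M}$, i.e. condition $(1)$ holds for $\mathfrak{N}=\mathfrak{M}$.

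For the second implication, suppose $[q^{e_q}]P\not\equiv O_E\bmod M$ and write $[q^{e_q}]P\bmod M=[x_0:y_0:\sum_i a_i x^i]$. Since every prime of $\mathcal{O}_H$ above $M$ has norm $M$, for each $i$ we have $M\mid N_{H/\mathbb{Q}}(a_i)$ iff some prime $\mathfrak{P}\mid M$ divides $(a_i)$; equivalently $M\nmid N_{H/\mathbb{Q}}(a_i)$ iff $a_i$ is a unit modulo every prime above $M$, in particular modulo every $\mathfrak{M}\mid(\iota)$. Thus if $M\nmid N_{H/\mathbb{Q}}(a_i)$ for some $i$, then $a_i\notin\mathfrak{M}$ for every $\mathfrak{M}\mid(\iota)$, so condition $(1)$ fails for every such $\mathfrak{M}$ --- this is the ``if not'' half. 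It remains to show that if $M\mid N_{H/\mathbb{Q}}(a_i)$ for all $i$, then there is a \emph{single} prime $\mathfrak{M}\mid(\iota)$ with $a_i\in\mathfrak{M}$ for all $i$. Here I would use that $[q^{e_q}]P\bmod M$ is a genuine projective point over $R=(\mathcal{O}_H/M\mathcal{O}_H)[x]/(f_k)$, that $f_k=x^k-a$ is irreducible modulo $M$ (forced by $a$ being a primitive $k$-th non-residue), so $R\cong\prod_{\mathfrak{P}\mid M}\mathbb{F}_{M^k}$ is a product of fields, and that in each field factor the $z$-coordinate is either $0$ (that factor contributing $O_E$) or a unit. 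One then invokes the particular shape $P=\bigl[\#E_{p,k}/q^{e'}\bigr]Q$ --- whose order at every prime above $M$ is a power of $q$ of exponent at most $e'$, and for which $\#E_{p,k}$ is the order of $E(\mathbb{F}_{M^k})$ at every prime above $M$ --- together with the $\operatorname{Gal}(H/K)$-action and complex conjugation relating the reductions over $(\iota)$ and over $(\bar\iota)$ (via $\psi(\overline{\mathfrak{M}})=\overline{\psi(\mathfrak{M})}$, both generating the appropriate prime of $\mathcal{O}_K$), to force the coordinates $a_i$ to vanish modulo one common prime, which can then be taken above $(\iota)$.

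The main obstacle is exactly this last step. The norm test only certifies that \emph{for each} $i$ some prime above $M$ kills $a_i$, and a priori these primes could depend on $i$, in which case no single $\mathfrak{M}$ --- above $(\iota)$ or otherwise --- would satisfy condition $(1)$; ruling this out genuinely needs the arithmetic of $P$ and of the reductions of $E$ at the primes above $M$, not the $a_i$ in isolation. A cleaner route, which sidesteps the difficulty since $\iota$ is known, is to replace the check ``$M\mid N_{H/\mathbb{Q}}(a_i)$ for all $i$'' by ``$\iota\mid a_i$ in $\mathcal{O}_H$ for all $i$'', i.e. each $a_i\in\iota\mathcal{O}_H/M\mathcal{O}_H$: this is decidable and is equivalent to $[q^{e_q}]P\equiv O_E$ modulo \emph{every} prime above $(\iota)$ simultaneously, hence a sound (and, for the Las Vegas algorithm, adequately complete) certification of condition $(1)$ that never needs $\mathfrak{N}$ itself.
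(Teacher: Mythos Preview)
Your first implication and the ``if not'' half are exactly the paper's argument, just spelled out: the paper's entire proof is two sentences, the first being ``$(M)\subset\mathfrak{M}$'' and the second asserting that $M\mid N_{H/\mathbb{Q}}(a_i)$ for all $i$ gives the conclusion ``by definition''.

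Where you diverge from the paper is in being \emph{more} careful than it is. You correctly flag that the norm condition $M\mid N_{H/\mathbb{Q}}(a_i)$ only guarantees, for each $i$ separately, that \emph{some} prime of $\mathcal{O}_H$ above $M$ contains $a_i$; it does not immediately force a single common prime, let alone one lying above $(\iota)$ rather than $(\bar\iota)$. The paper's proof does not address this point at all --- it simply writes ``by definition'' and moves on. So the subtlety you identify is genuine, and the paper does not resolve it any more than you do; your sketch invoking the product-of-fields structure of $R$ and the Galois/conjugation symmetry is already more than the paper offers.

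Your proposed alternative test --- checking $\iota\mid a_i$ in $\mathcal{O}_H$ rather than $M\mid N_{H/\mathbb{Q}}(a_i)$ --- is a clean and sound replacement that bypasses the issue entirely, and is arguably what one would actually implement. In short: your argument tracks the paper's line, and the one place you hesitate is a place where the paper's own proof is thinner than yours.
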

\begin{proof}
    The first statement is because $(M)\subset \mathfrak{M}$. Iff $M|N_{H/\mathbb{Q}} (a_i)$ for each $a_i$, then by definition some $[q^{e_q}]P\equiv O_E$ modulo \emph{some} $\mathfrak{M}$ above $(\iota)$. 
\end{proof}

Then condition $(2)$ is modified: 

\begin{lemma}\label{p}
    Condition $(2)$ holds for the same $\mathfrak{M}$ that condition $(1)$ holds for if $[q^{e_q-1}]P$ is strongly nonzero modulo ${M}$.
\end{lemma}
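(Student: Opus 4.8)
The plan is to transfer the ``strongly nonzero'' property from the rational modulus $M$ down to the ideal $\mathfrak{M}$ of $\mathcal{O}_H$, using that reduction modulo $M$ factors through reduction modulo $\mathfrak{M}$. First I would record the divisibility $\mathfrak{M}\mid M\mathcal{O}_H$: since $M=\iota\overline{\iota}\in(\iota)\mathcal{O}_K\subseteq(\iota)\mathcal{O}_H$ and $\mathfrak{M}$ lies above $(\iota)$ in $H/K$, so that $(\iota)\mathcal{O}_H\subseteq\mathfrak{M}$, we get $M\in\mathfrak{M}$ and hence $M\mathcal{O}_H\subseteq\mathfrak{M}$. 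Combined with the assumed good reduction of $E$ and the partial-reduction principle of the Preliminaries (for $Q'$ and $\alpha$, $[\alpha](Q'\bmod\mathfrak{a})\equiv[\alpha]Q'\bmod\mathfrak{a}$), this shows that the explicitly computed triple $[q^{e_q-1}]P\bmod M=[x_0:y_0:\sum_{i=0}^{k-1}a_ix^i]$, after coordinate-wise reduction modulo $\mathfrak{M}$, is a representative of the point $[q^{e_q-1}]P$ in $E((\mathcal{O}_H/\mathfrak{M})[x]/(f_k))$ — i.e. exactly the point that condition $(2)$ of Theorem \ref{fast} refers to, with $\mathfrak{N}=\mathfrak{M}$.

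Next I would push the coprimality down. The hypothesis that $[q^{e_q-1}]P$ is strongly nonzero modulo $M$ means $\gcd(a_i\mathcal{O}_H,M\mathcal{O}_H)=(1)$, equivalently $a_i\mathcal{O}_H+M\mathcal{O}_H=\mathcal{O}_H$, for each $0\le i\le k-1$ (verified in practice via $\gcd(N_{H/\mathbb{Q}}(a_i\mathcal{O}_H),M)=1$ as in Remark \ref{stronglynonzero}). Since $M\mathcal{O}_H\subseteq\mathfrak{M}$, monotonicity of the ideal sum gives $a_i\mathcal{O}_H+\mathfrak{M}\supseteq a_i\mathcal{O}_H+M\mathcal{O}_H=\mathcal{O}_H$, so $\gcd(a_i\mathcal{O}_H,\mathfrak{M})=(1)$ for every $i$. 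Therefore the reduction of $[q^{e_q-1}]P$ modulo $(\mathfrak{M},f_k)$ has every coefficient of its $z$-coordinate coprime to $\mathfrak{M}$, which is by definition the assertion that $[q^{e_q-1}]P$ is strongly nonzero modulo $(\mathfrak{M},f_k)$, i.e. condition $(2)$ of Theorem \ref{fast}. Because the hypothesis imposes no restriction on which divisor of $M\mathcal{O}_H$ we call $\mathfrak{M}$, this conclusion holds in particular for the $\mathfrak{M}$ above $(\iota)$ for which condition $(1)$ was established in the preceding lemma.

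The only delicate point — the main ``obstacle,'' such as it is — is the bookkeeping about representatives: one must check that ``strongly nonzero modulo $M$'' of the \emph{computed} triple genuinely yields ``strongly nonzero modulo $(\mathfrak{M},f_k)$'' of the \emph{underlying} point, which rests on two facts already available, namely that reducing modulo $M$ is a partial reduction of reducing modulo $\mathfrak{M}$ (so no spurious cancellation occurs upon further reduction), and that the strongly-nonzero condition is precisely a coordinate-wise coprimality statement, hence preserved when the complementary ideal is enlarged from $M\mathcal{O}_H$ to $\mathfrak{M}$. No primality of $M$ or of $\mathfrak{M}$ is needed for the implication itself; primality of $M$ enters only in the surrounding algorithm, to guarantee that an $\mathfrak{M}$ above $(\iota)$ with $N_{H/K}(\mathfrak{M})=(\iota)$ and $\psi(\mathfrak{M})=\pm\iota$ exists.
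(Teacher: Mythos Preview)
Your argument is correct and is exactly the paper's approach: the paper's proof is the single line ``This follows immediately because $\mathfrak{M}\mid M$,'' and your proposal simply unpacks that divisibility and the monotonicity of the ideal-sum definition of $\gcd$ to conclude strongly nonzero modulo $\mathfrak{M}$ from strongly nonzero modulo $M$.
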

\begin{proof}
    This follows immediately because $\mathfrak{M}|M$. 
\end{proof}

Consider the setup in Theorem \ref{prob}, wherein we must choose $\iota$ so that $(\iota^k-1)=\Gamma\mathfrak{q}^{e'}$ with $q^{e'}>N^{1/2+\alpha}$ and $\mathfrak{q}$ principal. We must also assume that $\iota=\psi(\mathfrak{M})$ for some ideal $\mathfrak{M}$ above $(\iota)$ in $H/K$, so that condition $(1)$ may be satisfied by Lemma \ref{ord}. This has an a priori chance of $1/2$ since $\psi(\mathfrak{M})=\pm \iota$. With these assumptions, utilizing Theorem \ref{prob} and the fact that there are $h_K$ prime ideals $\mathfrak{M}$ above $(\iota)$, there is an $1-h_K/N^{\alpha/2}$ of some $e_q$ satsifying conditions $(1),(2)$ with $N^{1/2}<q^{e_q}\leq q^{e'}$. The rest of Algorithm \ref{1} and the subsequent discussion may be carried out identically. Note that $h_K$ grows roughly as $\sqrt{|-d|}$ for $K=\mathbb{Q}[\sqrt{-d}]$.

This latter method gives us a very flexible framework for testing the primality of certain new sequences of rational integers. All the information required is an elliptic curve $E/H$ with CM by $\mathcal{O}_K$, a rational prime $q=\iota\overline{\iota}$ splitting into two principal ideals over $\mathcal{O}_K$, a primitive $k$-th root of unity $b$ modulo $(\iota)^n$ (which can be computed deterministically given a $k$-th root of unity modulo $(\iota)^{n-1}$ with standard techniques, e.g. in the work of Deng and Lv \cite[Section 4.1]{lv}), some $k$-th power non-residue $a$ modulo $\alpha_n=\gamma\iota^n+b$, and that $E/H$ has good reduction modulo each ideal above $(\alpha_n)$ in $H/K$ (it is sufficient to check $\gcd(N_{K/\mathbb{Q}}(\alpha_n),N_{H/\mathbb{Q}}(\operatorname{disc} (E)))=(1)$). Then we can test the primality of rational integers in the sequence 

$$N_{K/\mathbb{Q}}(\alpha_n)$$ 

when $N_{K/\mathbb{Q}}(\iota^n)>N^{1/2+\alpha}$. In particular, we expect to be able to test and prove the primality of $1/2$ of the rational primes in the sequence (those with $\psi(\mathfrak{M})=\iota$ for $\mathfrak{M}$ above $(\iota)$, so that $(\psi(\mathfrak{M})^k-1)$ is highly factored). 


For an example, consider $K=\mathbb{Q}[\sqrt{-17}]$. By sequence A046085 in the OEIS \cite{oeis}, $K$ has class number $4$. We can write the Hilbert Class field as $H=\mathbb{Q}[\sqrt{-17},\sqrt{(1+\sqrt{17})/2}]$, as is verified in \cite[Example 1.8.14.]{cft}. We can consider the elliptic curve 

$$E:y^2+xy=x^3-36/(j-1728)x-1/(j-1728)$$
with $j=8000(5569095+1350704\sqrt{17}+4\sqrt{3876889241278+940283755330\sqrt{17}}),$ which has CM by $K$. Because $-17\equiv 3\mod 4$, $\mathcal{O}_K=\mathbb{Z}[\sqrt{-17}]$. Notice that $157=(2+3\sqrt{-17})(2-3\sqrt{-17})$ splits into two prime principal ideals in $\mathcal{O}_K$. Consider the sequence 
$$\alpha_n=(2+3\sqrt{-17})^n+14\in \mathcal{O}_K$$
noting that $14^{13}\equiv 1\mod (2+3\sqrt{-17})$ in $\mathcal{O}_K$. We have that $(\alpha_n)$ is prime to $N_{H/K}(\Delta(E))$ by direct computation. First checking that $N_{K/\mathbb{Q}}(\alpha_n)$ is not a power of some rational integer naively, we can test the primality of the sequence $N_{K/\mathbb{Q}}(\alpha_n)$. 


\section{Acknowledgements}

I would like to thank Andrew Sutherland for his guidance on this paper and mentorship in general on elliptic curves, number theory, and algebraic geometry.

\bibliographystyle{plain} 
\bibliography{main}




\end{document}